\pdfoutput=1

\documentclass[a4paper,11pt]{extarticle}

\usepackage[utf8]{inputenc}
\usepackage[margin=1in]{geometry}
\usepackage{setspace}
\usepackage{graphicx}
\usepackage{float}
\usepackage{subcaption}
\usepackage{caption}

\usepackage{amsmath, amssymb, amsfonts}
\usepackage{mathtools}
\usepackage{amsthm}
\usepackage{dsfont}
\usepackage{gensymb}

\usepackage{algorithmic}
\usepackage{listings}

\usepackage{xcolor}

\usepackage{cite}
\usepackage{url}

\usepackage[hidelinks,hypertexnames=false]{hyperref}

\usepackage{authblk}

\newcommand\numberthis{\addtocounter{equation}{1}\tag{\theequation}}

\newtheorem{theorem}{Theorem}
\newtheorem{lemma}{Lemma}
\newtheorem{remark}{Remark}
\newtheorem{definition}{Definition}
\newtheorem{proposition}{Proposition}

\title{Separation is Optimal for LQR under Intermittent Feedback}

\author{Abdullah Yasin Etcibasi}
\author{C. Emre Koksal}
\author{Eylem Ekici}

\affil{The Ohio State University}

\affil{\texttt{\{etcibasi.1, koksal.2, ekici.2\}@osu.edu}}

\date{}

\begin{document}

\maketitle

\begin{abstract}
	We study finite-horizon linear-quadratic regulation of a scalar linear system with intermittent state feedback under an average communication-rate constraint. In this setting, the scheduling policy and controller are generally coupled through the dual effect: transmission decisions shape future estimation errors, while control actions influence the information available for scheduling. Existing treatments often recover tractability by restricting attention to symmetric scheduling policies, but the optimality of this restriction has remained unclear. We show that, for i.i.d. zero-mean disturbances, symmetric policies are optimal. Consequently, the communication-constrained LQR problem admits a separation structure. The optimal controller is a linear feedback law independent of the scheduling policy, while the optimal scheduler is obtained from a dynamic program. We further show that the optimal scheduling rule is a symmetric threshold policy in the accumulated disturbance since the most recent update.
\end{abstract}

\section{Introduction}

Over the past three decades, wireless communication has advanced rapidly. Starting with early generations, especially 4G, wireless connectivity became an essential part of daily life. With the advent of 5G and the forthcoming 6G, we are now entering a new “robotic era,” characterized by pervasive autonomous agents such as robot vacuums, self-driving vehicles, and aerial drones. Unlike previous eras, these agents must not only communicate with humans but also coordinate among themselves, forming swarms of drones and platoons of vehicles. Consequently, engineers must jointly address communication and control, rather than treat these domains independently.

Traditionally, wireless system designers have tended to ignore control loop requirements, while control engineers have assumed perfect communication channels \cite{park2017wireless}. In practice, control strategies must tolerate network imperfections, and communication protocols should respect the dynamics of the underlying physical systems \cite{hespanha2007survey}. This interplay has given rise to the field of networked control systems (NCSs) \cite{zhang2001stability,walsh2001scheduling}.

As the name suggests, NCS research investigates control loops in which one or more feedback paths traverse a communication network. Early contributions introduced packet losses and delays into feedback control and examined the resulting performance and stability. For example, Åström and Bernhardsson compared the mean variance of a continuous‑time system under periodic sampling and state‑dependent (event‑triggered) sampling, assuming impulsive control actions at sampling instants \cite{astrom2002comparison}. Tabuada proposed an event‑triggered feedback policy for linear time‑invariant (LTI) systems, where the maximum inter-update period depends on the estimation error; the controller employs a zero‑order hold (ZOH), updating only at sampling times and holding the control value constant in between \cite{tabuada2007event}. Montestruque and Antsaklis investigated the stochastic stability of NCSs with time‑varying update intervals (independent and identically distributed (i.i.d.) or Markov‑chain driven) modeling the sampled system as a jump‑linear system \cite{montestruque2004stability}. Heemels et al.\ introduced self‑triggered sampling, where each update time is computed at the preceding update and compared with event‑triggered policies \cite{heemels2012introduction}. Zhang et al.\ separately examined the effects of delays and packet dropouts on stability. Delays were handled by employing system estimates in both full‑state and partial‑state feedback, while packet dropouts were analyzed under periodic transmissions to determine the minimum stable transmission rate \cite{zhang2001stability}. Finally, Lyapunov-based analyses by Heemels and colleagues established the maximum allowable transmission interval (MATI) and maximum allowable delay (MAD) that ensure NCS stability \cite{heemels2010networked}.

Fundamentally, any NCS consists of two key design components. These are the scheduling\footnote{In the literature, this mechanism is referred to by various terms such as scheduling, sampling, transmission, or switching. Throughout this paper, we adopt the term \emph{scheduling}.} mechanism and the controller (see Fig.~\ref{fig:Sys_Model}). In this work, we focus on their joint optimization. We model system uncertainties as i.i.d.\ zero-mean disturbances with a symmetric distribution and employ a standard quadratic cost on state deviations and control effort. This formulation is known as the Linear Quadratic Regulator (LQR), since the objective is to regulate the system state to the origin. This criterion is widely used in the NCS literature~\cite{park2017wireless}.

Our aim is to find the optimal scheduling and control policies for the finite-horizon LQR problem under an average communication-rate constraint. The main difficulty arises from the dual effect, whereby transmission decisions influence future estimation errors while control actions affect the evolution of the estimator. This coupling between the scheduling mechanism and the controller breaks the classical separation principle, which ordinarily allows the estimator and controller to be designed independently without loss of optimality~\cite{bar2003dual}. To regain separability, much of the existing literature imposes simplifying assumptions, most commonly by restricting attention to a subclass of policies under which the conditional mean of the accumulated disturbance is zero. We define such policies as \emph{symmetric policies}. While this assumption renders the problem analytically tractable, it restricts the policy space, raising the question of whether this subclass is optimal. Crucially, however, the optimality of this restricted policy space has yet to be proven, representing a significant gap in the current understanding of communication-constrained LQR problems. Next, we review how this joint optimization problem has been treated in the literature, highlighting the widespread use of the symmetry assumption, sometimes imposed explicitly by assuming zero-mean disturbances and sometimes adopted implicitly through Bernoulli sampling models, or symmetric threshold rules.

In~\cite{molin2009lqg}, the communication-rate constraint was relaxed through a Lagrangian formulation, and separation was enforced by assuming zero-mean disturbances. The design was then recast as an equivalent optimal-estimation problem to enable numerical solutions. An equivalent form of the symmetry assumption was adopted implicitly in~\cite{champati2019performance}, where the design was reformulated as an age-of-information (AoI) minimization problem and a heuristic scheduler was proposed. In a two-hop multi-system setting~\cite{ayan2019age}, disturbances were likewise constrained to be zero-mean, and AoI- and value-of-information (VoI)-based schedulers were compared numerically. Building on this line of work, a VoI metric was introduced in~\cite{wang2021value}, for which a threshold policy was proposed, and the concept was further refined in~\cite{soleymani2021value} with corresponding sampling thresholds.

An alternative line of work was pursued in~\cite{schenato2007foundations}, where sampler updates were modeled as i.i.d. Bernoulli events. It was shown that a linear state-feedback law, with gain given by the Riccati solution, remains optimal under random packet drops, and the resulting stability was analyzed. Similarly, link failures were treated as Bernoulli losses in~\cite{imer2006optimal}, where an impulsive controller (no control when the link fails) was implemented, again recovering the linear feedback controller as LQG-optimal.

Other studies have focused on the joint scheduler and controller design under symmetric threshold policies. In~\cite{ramesh2013design}, a threshold was imposed on the squared estimation error, under which the linear feedback law was found to be optimal. The relation between this threshold and the Lagrange multiplier was investigated in~\cite{molin2014price}, with convergence and stability properties analyzed. Energy constraints at the sensor were incorporated in~\cite{leong2017event}, where a threshold based on the conditional covariance of the estimation error was proposed. More recent work~\cite{aggarwal2025interq} employed a Q-learning–based reinforcement-learning algorithm under the same symmetric assumption, while varying delays were addressed in~\cite{maity2018optimal} through a mixed-integer control problem solved offline with Riccati methods and integer programming. From a differential-game perspective~\cite{aggarwal2024linear}, continuous and intermittent sensing players were contrasted, leading to AoI-based threshold rules. Finally, the disturbance-driven estimation dynamics were correctly modeled in~\cite{maity2020minimal}, where a relaxed dynamic program for the sampler was formulated and an approximate solution proposed due to computational complexity.

In this work, we make the following contributions:
\begin{itemize}
    \item We revisit the constrained joint design of scheduling and control policies for scalar LTI systems, aiming to clarify subtle underlying assumptions.
    
    \item Most importantly, we prove that symmetric policies are optimal for the finite-horizon LQR problem under a communication-rate constraint, thereby establishing that the separation principle holds.
    
    \item Finally, we solve the associated dynamic programming (DP) problem and derive the optimal scheduling and control policies, which take the form of a symmetric threshold rule on the estimation error and a discounted linear feedback law that is independent of the scheduling policy, respectively.
\end{itemize}

We begin by analyzing the information structure of the system and identifying when the classical separation principle breaks down. We then discuss why at least a unit-delay must be introduced between the scheduling mechanism and controller. Then, we introduce the class of symmetric policies and demonstrate that under these policies, the dual effect is neutralized, allowing the estimation and control tasks to decouple. We rigorously prove that this subclass is not only tractable but optimal for the finite-horizon LQR problem under a communication-rate constraint. Finally, we derive the closed-form solution for the optimal system, which consists of a linear feedback control law and a symmetric threshold scheduling policy based on the accumulated disturbances.

\section{System Model}
\label{sec:Sys_Mdl}

\begin{figure}        
  \centering                
  \includegraphics[width=0.6\textwidth]{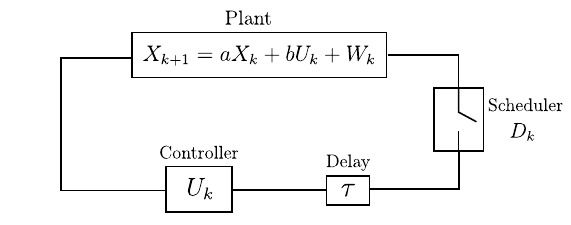}
  \caption{Block diagram of the closed-loop control system}
  \label{fig:Sys_Model}      
\end{figure}

Consider the single-loop, discrete-time, scalar LTI system illustrated in Fig.~\ref{fig:Sys_Model}. 
The plant evolves according to
\begin{equation}
    X_{k+1} = a X_k + b U_k + W_k,
    \label{eqn:system_Model}
\end{equation}
where $X_k$ denotes the system state, $U_k$ the control input, and $W_k$ an additive disturbance. 
The disturbance sequence $\{W_k\}$ is assumed to be i.i.d., zero-mean, with finite variance $\sigma_W^2$, and symmetrically distributed about zero (not necessarily Gaussian). The controller receives intermittent information through the scheduling mechanism with a fixed \(\tau \geq 1\) delay, so that the observation available to the controller is
\begin{equation}
    Y_{k} = D_{k - \tau}\,X_{k - \tau},
    \label{eqn:Y_k}
\end{equation}
where \(D_{k} \in \{0,1\}\) denotes the scheduling decision at time \(k\). In the sequel, we discuss why at least a one-step delay must be introduced between the scheduling mechanism and controller.

Let \(t_k\) denote the most recent update time prior to \(k\), defined as
\[
t_k := \max \{ j \le k \mid D_j = 1 \}.
\]
The controller’s information at time \(k\) is given by
\begin{equation}
    I^\mathcal{C}_{k}
    \;=\;
    \{\,Y_{0},\,U_{0},\,Y_{1},\,U_{1},\,\dots,\,Y_{k-1},U_{\,k-1},\,Y_{\,k}\,\}.
    \label{eqn:Control_info}
\end{equation}
The information vector at the scheduler plays a key role in the optimal scheduling problem. Various approaches and information sets appear in the literature. In the most general form, we denote the scheduler information by
\begin{equation}
    I^\mathcal{S}_{k}
    \;=\;
    \{\,X_{0},\,D_{0},\,U_{0},\,X_{1},\,D_{1},\,U_{1},\,\dots,\,X_{\,k-1},\,D_{\,k-1},U_{k-1},\,X_{\,k}\,\}.
    \label{eqn:update_info}
\end{equation}

\paragraph*{\textbf{Causality of the Information Structure:}}
A policy is said to be causal if, at time $k$, the control input $U_k$ depends only on the information available up to time $k$, i.e., $U_k$ must be measurable with respect to the controller information set $I_k^\mathcal{C}$ defined in~\eqref{eqn:Control_info}. In delay-free formulations, the controller information set $I_k^\mathcal{C}$ is often taken to include the measurement $Y_k = D_k X_k$. However, from~\eqref{eqn:update_info}, the scheduling decision $D_k$ is generated by the scheduler at time $k$ based on $I_k^\mathcal{S}$, which includes $X_k$ but not $D_k$ itself prior to its realization. Since the controller is located downstream of the scheduler, the value of $D_k$ becomes available to the controller only after the scheduler makes its decision. Therefore, if $U_k$ is required to be generated at time $k$ based on $I_k^\mathcal{C}$, including $Y_k = D_k X_k$ in $I_k^\mathcal{C}$ implicitly assumes that the controller has instantaneous access to $D_k$. This creates a circular dependence, as $U_k$ would depend on a variable that is generated at the same time instant and not yet available at the moment of decision, violating causality in the sense of~\cite{witsenhausen1971information}.

To ensure a well-defined causal information structure, the controller must rely only on variables available strictly prior to or at the decision time. This can be enforced by introducing at least a one-step delay between the scheduling mechanism and controller, so that the controller uses $D_{k-1}$ instead of $D_k$ when computing $U_k$. Note that several works in the literature assume delay-free feedback and study separation or optimal scheduling in the communication-constrained LQR problem~\cite{molin2009lqg, molin2014price, molin2011optimal, ramesh2013design, ayan2019age, ramesh2011dual, leong2017event, klugel2020joint}. As discussed, such formulations implicitly rely on information patterns that violates causality, which can affect the validity of the associated dynamic programming formulations. 

With the causal information structure now specified, we define the control and scheduling policies as
\begin{align}
  U_{k} \; &= \; \gamma_{k}\bigl(I^\mathcal{C}_{k}\bigr) \;\in\; \Gamma,\\
  D_{k} \; &= \; f_{k}\bigl(I^\mathcal{S}_{k}\bigr) \;\in\; \Pi,
\end{align}
where the sets \(\Gamma\) and \(\Pi\) contain all admissible control and scheduling decision rules that satisfy the usual measurability and integrability conditions.  
We write  
\(\boldsymbol{\mu}=(\gamma_{0},\gamma_{1},\dots)\in\Gamma\) and  
\(\boldsymbol{\pi}=(f_{0},f_{1},\dots)\in\Pi\)  
for generic control- and scheduling-policy sequences, respectively.  
Because the two sequences are designed jointly, \(\boldsymbol{\mu}\) and \(\boldsymbol{\pi}\) can be coupled.

The main objective is to minimize constrained expected LQR cost over a finite horizon
\begin{align}
  P_1 : & \min_{\boldsymbol{\pi}\in\Pi,\;\boldsymbol{\mu}\in\Gamma}   \frac{1}{N}\mathbb{E}\left[\sum_{k=0}^{N-1}\bigl(q\,X_k^2 + r\,U_k^2 \bigr) + qX_N^2\right] \label{eqn:Original_Objective} \\
  & s.t. \quad \: \frac{1}{N} \sum_{k=0}^{N-1} D_k \leq r_{s}
\end{align}
where \(q,\,r,\,r_{s} > 0\).

\begin{theorem} 
\label{theorem:optimal control}
    Consider the system \eqref{eqn:system_Model}–\eqref{eqn:update_info} and the optimization problem \(P_{1}\). The optimal controller is given by the linear feedback law 
    \begin{equation}
        U_k = \gamma_k(I^\mathcal{C}_{k}) = -L\,\hat{X}_k,
        \label{eqn:optimal_controller}
    \end{equation}
    where 
    \begin{equation}
        \hat{X}_k = \mathbb{E}\bigl[X_k\mid I^\mathcal{C}_{k}\bigr], 
        \quad
        L = \frac{a b\,P}{r + b^2 P},
        \quad
        P = a^2P + q - \frac{(a b P)^2}{r + b^2 P}.
    \end{equation}
\end{theorem}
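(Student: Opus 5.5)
Using the gain $L$ and the stationary Riccati coefficient $P$ in the statement, I will complete the square in the cost, isolate the control-dependent part, and show that the remaining terms do not depend on $\boldsymbol{\mu}$ once the scheduling policy is handled. The problem is finite horizon, so the backward recursion $P_{k}=a^2P_{k+1}+q-\frac{(abP_{k+1})^2}{r+b^2P_{k+1}}$ with $P_N=q$ appears naturally. I expect the stated stationary $P$ and $L$ are meant with the horizon effects absorbed, or as the large-$N$ limit. I will first derive the time-varying gains $L_k=\frac{abP_{k+1}}{r+b^2P_{k+1}}$ and then note that $P$ is the fixed point of this recursion; if the intended statement is stationary, I would initialize the recursion at the fixed point, $P_N=P$.

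\textbf{Step 1 (completion of squares).} Expanding $P_{k+1}X_{k+1}^2-P_kX_k^2$ with \eqref{eqn:system_Model} and telescoping, I would write the objective of $P_1$ as
\begin{align*}
N J &= \mathbb{E}\bigl[P_0X_0^2\bigr]+\sum_{k=0}^{N-1}P_{k+1}\sigma_W^2 \\
&\quad +\sum_{k=0}^{N-1}(r+b^2P_{k+1})\,\mathbb{E}\bigl[(U_k+L_kX_k)^2\bigr].
\end{align*}
The cross terms $\mathbb{E}[W_k(aX_k+bU_k)]$ vanish because $W_k$ is zero-mean and independent of $(X_k,U_k)$. This independence holds since $U_k$ depends on $I^\mathcal{C}_k$, which involves only $X_{k-\tau}$ with $\tau\ge1$ and past disturbances. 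The first two terms do not depend on $(\boldsymbol{\pi},\boldsymbol{\mu})$.

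\textbf{Step 2 (conditional decomposition).} Since $U_k$ is $I^\mathcal{C}_k$-measurable,
\[
\mathbb{E}\bigl[(U_k+L_kX_k)^2\bigr]=\mathbb{E}\bigl[(U_k+L_k\hat X_k)^2\bigr]+L_k^2\,\mathbb{E}\bigl[(X_k-\hat X_k)^2\bigr].
\]
For a fixed scheduler, the first term is minimized by $U_k=-L_k\hat X_k$. This is the only place where $\boldsymbol{\mu}$ enters directly.

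\textbf{Step 3 (main obstacle: the dual effect).} I must show that the error term $\mathbb{E}[(X_k-\hat X_k)^2]$ is unaffected by the choice of $\boldsymbol{\mu}$, for any scheduler that is optimal. Writing $X_k$ as $a^{k-t}X_t+\sum a^{k-1-j}bU_j+\sum a^{k-1-j}W_j$ from the last received update $t$, the control part is known to the controller. Hence $X_k-\hat X_k=E_k-\mathbb{E}[E_k\mid I^\mathcal{C}_k]$, where $E_k$ is the accumulated disturbance since the last update. However, the scheduler observes $X$, and hence $U$, so it could use $U$ to shape its decisions; the conditional law of $E_k$ given the silence event then depends on $\boldsymbol{\mu}$.

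I would resolve this with a reduction argument. For any pair $(\boldsymbol{\pi},\boldsymbol{\mu})$, I construct a scheduler $\tilde{\boldsymbol{\pi}}$ that depends only on the disturbance innovations $E$. Such a scheduler can be reproduced from $I^\mathcal{S}_k$ by subtracting the known control contributions. I then show it achieves no larger error cost while satisfying the same rate constraint. Concretely, I would argue by symmetry of the distribution of $W$: replace $\boldsymbol{\pi}$ by its symmetrization (randomize, or use $|E|$-based decisions), which keeps the transmission rate and makes $\mathbb{E}[E_k\mid I^\mathcal{C}_k]=0$. Jensen's inequality then shows the estimation error term does not increase.

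Once schedulers are restricted to symmetric, $E$-driven policies, the error term is independent of $\boldsymbol{\mu}$. Steps 1–2 therefore give $U_k=-L\hat X_k$, independent of the scheduler, which is the separation claimed in Theorem~\ref{theorem:optimal control}. The delicate part is making the symmetrization rigorous while the rate constraint holds in expectation.
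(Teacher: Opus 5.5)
The step that fails is the symmetrization in Step~3, and this statement does not need it. Jensen's inequality points the other way. For a fixed law of $E_k$,
$\mathbb{E}\bigl[(E_k-\mathbb{E}[E_k\mid I^{\mathcal C}_k])^2\bigr]=\mathbb{E}[E_k^2]-\mathbb{E}\bigl[(\mathbb{E}[E_k\mid I^{\mathcal C}_k])^2\bigr]$.
So a nonzero conditional mean is information carried by silence, and it \emph{lowers} the error. Suppose you randomize the sign of the post-update innovations with a fair coin the controller cannot see. Within the current silence interval, the update probabilities and the law of $E_k^2$ are unchanged. But the controller's posterior becomes an equal mixture of the original posterior and its reflection, so the error grows by exactly the squared bias you removed. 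Your other option is to replace an arbitrary scheduler by an $|E|$-driven one with the same budget and no larger error. That is a substantive structural claim, the subject of Theorem~\ref{thm:sym_policies_are_optimal}, and Jensen does not deliver it. Your closing sentence also misplaces the mechanism. The error term becomes independent of $\boldsymbol{\mu}$ because the scheduler is driven by primitives, not because it is symmetric.

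The fix is to keep only your reparametrization and make it exact.
\begin{itemize}
\item Write $D_k=\tilde f_k(X_0,W_0,\dots,W_{k-1})$ using \emph{all} primitives; the scheduler recovers these from $I^{\mathcal S}_k$ via $W_j=X_{j+1}-aX_j-bU_j$. Using only the innovations since the last update would be a genuine restriction, since the paper's state-based rule depends on $X_{t_k}$.
\item This reproduces the decision sequence on every sample path, so both the cost and the budget $\sum_k D_k\le Nr_s$ are unchanged. The constraint in $P_1$ is pathwise, not in expectation, so there is no delicacy.
\item For fixed $\tilde f$, take arrivals as known to the controller, as the paper does through $t_k$. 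Then $\sigma(I^{\mathcal C}_k)$ is generated by $D_{j-\tau}$ and $D_{j-\tau}\tilde X_{j-\tau}$, $j\le k$, with $\tilde X_j=a^jX_0+\sum_{i<j}a^{j-1-i}W_i$.
\item Hence $X_k-\hat X_k=\tilde X_k-\mathbb{E}[\tilde X_k\mid I^{\mathcal C}_k]$ does not depend on $\boldsymbol{\mu}$. Your Steps 1--2 then give $U_k=-L_k\hat X_k$ for every scheduler, with $\hat X_k$ depending on $\tilde f$, exactly as the statement intends.
\end{itemize}

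This route differs from the paper's Appendix~A, which runs Bertsekas-style backward DP on $J^\star_k(I^{\mathcal C}_k)$. There the key step is simply asserted: the error terms are independent of $u_k$ ``since the system is linear and $I^{\mathcal C}_k$ includes the control signals''. That is precisely the dual-effect issue you flagged. Your completion of squares is the paper's Appendix~B computation. So your route yields Theorems~\ref{theorem:optimal control} and~\ref{theorem:equivalent problem} together, and it supplies the justification the paper omits. Both routes recover the stationary $L$ only as the large-$N$ limit. Initializing the recursion at $P_N=P$ would replace the terminal weight $q$ by $P$, so it would not solve $P_1$ exactly.
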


\begin{proof}
    See Appendix~\ref{Appendix_A}.
\end{proof}

Let us define the estimation error at the controller as
\begin{equation}
    \mathcal{E}_{k} \;=\; X_{k} \;-\; \hat{X}_{k}.
    \label{eqn:est_error_original}
\end{equation}
Given the optimal controller in~\eqref{eqn:optimal_controller}, the objective function in~\eqref{eqn:Original_Objective} can be reformulated in terms of the estimation error~\(\mathcal{E}_{k}\). The goal is then to determine the optimal scheduling policy that minimizes this reformulated cost.

\begin{theorem} \label{theorem:equivalent problem}
  For a scalar LTI system with \(q,r>0\), a fixed delay $\tau \geq 1$, and using the optimal controller in \eqref{eqn:optimal_controller}, problem \(P_{1}\) can be equivalently written as
  \begin{align}
    P_{2}:\quad 
      &\min_{\boldsymbol{\pi}\; \in \; \Pi}\;  \frac{1}{N}\,\mathbb{E}\left[\sum_{k=0}^{N-1} \mathcal{E}_{k}^{2}\right] 
      \label{eqn:P_2}\\
    \text{s.t.}\quad 
      & \frac{1}{N}\sum_{k=0}^{N-1} D_{k} \;\le\; r_{s}. 
      \nonumber
  \end{align}
\end{theorem}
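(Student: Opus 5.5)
The plan is the standard completion-of-squares argument for LQR, with the Riccati solution $P$ of Theorem~\ref{theorem:optimal control}, followed by showing that the leftover terms either do not depend on the scheduler or form a positively weighted sum of $\mathcal{E}_k^2$.

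\textbf{Step 1: complete the square.} Using the plant \eqref{eqn:system_Model} and the identity $P = a^2P + q - (abP)^2/(r+b^2P)$, I will telescope $\mathbb{E}[P X_{k+1}^2 - P X_k^2]$ over $k=0,\dots,N-1$. Together with the terminal term $qX_N^2$ (treated by a time-varying Riccati recursion $P_k$, or by absorbing the terminal mismatch into the constant), this gives
\begin{equation*}
\mathbb{E}\Bigl[\sum_{k=0}^{N-1}(qX_k^2+rU_k^2)+qX_N^2\Bigr]
= C_0 + \sum_{k=0}^{N-1}(r+b^2P_{k+1})\,\mathbb{E}\bigl[(U_k+L_kX_k)^2\bigr].
\end{equation*}
Here $C_0=\mathbb{E}[P_0X_0^2]+\sum_k P_{k+1}\sigma_W^2$. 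The cross terms $\mathbb{E}[W_k(aX_k+bU_k)]$ vanish because $W_k$ is independent of $(X_k,U_k)$: both are functions of $X_0,W_0,\dots,W_{k-1}$ and the decisions based on them. This step is routine. Its point is that $C_0$ does not depend on $(\boldsymbol{\pi},\boldsymbol{\mu})$.

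\textbf{Step 2: reduce to the estimation error.} Since $U_k$ is $I^\mathcal{C}_k$-measurable, the orthogonality of $X_k-\hat X_k$ to functions of $I^\mathcal{C}_k$ gives
\begin{equation*}
\mathbb{E}[(U_k+L_kX_k)^2]=\mathbb{E}[(U_k+L_k\hat X_k)^2]+L_k^2\,\mathbb{E}[\mathcal{E}_k^2].
\end{equation*}
Substituting the controller \eqref{eqn:optimal_controller} kills the first term. The cost then becomes $C_0+\sum_k (r+b^2P_{k+1})L_k^2\,\mathbb{E}[\mathcal{E}_k^2]$.

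\textbf{Step 3: show that $\mathcal{E}_k$ is controller-independent.} With $\tau\ge1$, I will write $\hat X_k$ by propagating the last received state $X_{t_{k-\tau}}$ forward with the known inputs. This gives
\begin{equation*}
\mathcal{E}_k = \sum_j a^{k-1-j}W_j - \mathbb{E}\Bigl[\sum_j a^{k-1-j}W_j \Bigm| I^\mathcal{C}_k\Bigr],
\end{equation*}
a function only of the accumulated disturbances since the last delivered update and of the scheduling events.

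\textbf{Step 4: obtain the uniform weight in $P_2$.} With the stationary $P$ and $L$ of the theorem, the weight $(r+b^2P)L^2$ is a positive constant. Positive scaling and the additive constant $C_0$ do not change the minimizer, so $P_1$ is equivalent to minimizing $\frac1N\mathbb{E}\sum_k\mathcal{E}_k^2$ under the same rate constraint.

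\textbf{Main obstacle.} The hardest part is the dual effect in Step 3: the scheduler sees $X_k$, which depends on past $U$'s, so $D_k$ and hence $I^\mathcal{C}_k$ could carry controller-dependent information. My plan is to argue that, because the controller is a deterministic function of $I^\mathcal{C}_k$, any scheduler can equivalently be rewritten as a function of $(X_0,W_0,\dots,W_{k-1})$ and past $D$'s. The controller's inputs are then known at both ends, and subtracting them at the scheduler is a bijective relabeling. This makes the set of achievable joint laws of $(\mathcal{E}_k,D_k)$ independent of $\boldsymbol{\mu}$, which is what makes the reformulated minimization over $\boldsymbol{\pi}$ alone legitimate, with Theorem~\ref{theorem:optimal control} invoked for the controller.
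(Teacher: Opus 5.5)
Your Steps 1, 2 and 4 follow the paper's argument in Appendix~\ref{Appendix_B}:
\begin{itemize}
\item telescoping with the time-varying recursion $P_k$, $P_N=q$;
\item completing the square to $\sum_k(r+b^2P_{k+1})(L_kX_k+U_k)^2$;
\item dropping $\mathbb{E}[(aX_k+bU_k)W_k]$ by independence;
\item inserting the certainty-equivalent controller with the stationary $P,L$.
\end{itemize}

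\textbf{Where you go beyond the paper.} The real difference is your orthogonality split in Step 2, together with Step 3 and the reparametrization in your last paragraph. The paper substitutes $U_k=-L\hat X_k$ and stops. It never shows that $\mathcal{E}_k$ is unaffected by $\boldsymbol{\mu}$: Appendix~\ref{Appendix_A} only asserts that the error terms do not depend on $u_k$, and the text after the theorem postpones controller-independence to the symmetric-policy analysis.

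Your reparametrization is valid here because $I^\mathcal{S}_k$ contains $U_{0:k-1}$. Hence, for each fixed $\boldsymbol{\mu}$, the map $f_k\mapsto\tilde f_k(X_0,W_{0:k-1},D_{0:k-1})$ is a law-preserving bijection. Set $\tilde X_k:=a^kX_0+\sum_{j<k}a^{k-1-j}W_j$. Then $\mathcal{E}_k=\tilde X_k-\mathbb{E}[\tilde X_k\mid I^\mathcal{C}_k]$, and $\sigma(I^\mathcal{C}_k)$ is generated by $\{D_j,\,D_j\tilde X_j\}_{j\le k-\tau}$. So $\mathcal{E}_k$ depends on $\tilde f$ alone.

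Combined with your split, this shows two things. First, $U_k=-L_k\hat X_k$ is optimal for every fixed reparametrized scheduler, so you do not actually need Theorem~\ref{theorem:optimal control}. Second, what remains is a minimization over schedulers only. Your route therefore gives a rigorous justification of the reduction, valid for all schedulers rather than only symmetric ones. The paper's route is shorter but establishes only the cost identity under a given controller.

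\textbf{Three things to tighten.}
\begin{itemize}
\item With the stationary gain in \eqref{eqn:optimal_controller}, the first term of Step 2 is $(L_k-L)^2\mathbb{E}[\hat X_k^2]$, not zero. Also, the weight $(r+b^2P_{k+1})L_k^2$ is constant only after replacing $P_{k+1}$ by $P$. What you get exactly is a time-weighted version of $P_2$. The uniform-weight form is a long-horizon approximation, which is precisely the $\approx$ in step (b) of the paper's proof, and you should state it as such.
\item Drop the alternative of absorbing the terminal mismatch into the constant. With a stationary $P$ the leftover is $(q-P)\mathbb{E}[X_N^2]$, which depends on $(\boldsymbol{\pi},\boldsymbol{\mu})$.
\item Controller-independence of $\sigma(I^\mathcal{C}_k)$ requires $D_j$ to be readable from $Y_{j+\tau}=D_jX_j$, which fails on $\{X_j=0\}$. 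Either let the controller observe $D_j$ explicitly, or assume this event is null.
\end{itemize}
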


\begin{proof}
    See Appendix~\ref{Appendix_B}.
\end{proof}
Note that in Theorem~\ref{theorem:optimal control}, we use the steady-state solution of the Riccati equation for notational simplicity, which leads to a simplified form of the equivalent problem $P_2$. In the transient case, the only difference is the appearance of the term $L_k (r + b^2 P_{k+1})$ in the summation. This term does not fundamentally affect the analysis. For completeness, we provide the key equations that change under the transient controller in Appendix~\ref{appendix:E}.

In formulation \(P_2\), the cost depends only on the estimation error. Consequently, if the estimation‐error dynamics can be written independently of the control inputs, the control and scheduling problems decouple, i.e., the separation principle holds. Note that without the separation principle, although Theorem \ref{theorem:optimal control} gives the optimal control law \( U_k = -L\,\hat X_k, \) the state estimate \(\hat X_k\) itself depends on the scheduling policy \(\boldsymbol{\pi}\). Until \(\boldsymbol{\pi}\) is fixed, the estimator remains undefined and the closed‐loop law cannot be completed. Only once \(\boldsymbol{\pi}\) is chosen can both the estimator and the corresponding optimal controller be fully specified.

\section{Symmetric Policies}
In this section, we begin by deriving the estimation error dynamics to identify the precise conditions under which the separation principle remains valid. Within this framework, we formally define the class of symmetric policies, a subclass characterized by the property that the accumulated disturbance remains conditionally zero-mean with respect to the controller's information. While such policies are often treated as a mere modeling convenience, they have silently shaped much of the literature in event-triggered control, frequently appearing as hidden or unstated assumptions.

We demonstrate that the statistical coupling between scheduling decisions and disturbances fundamentally prevents estimation error decoupling unless this symmetry is imposed. Ultimately, this section provides the necessary foundation for Section 5, where we move beyond tractability to rigorously prove that these symmetric policies are, in fact, optimal.

\subsection{Estimation Error Dynamics and the Role of Symmetry}

The analysis begins by deriving the estimation error dynamics based on the controller’s available information, setting the stage for examining the validity of the separation principle.

\begin{lemma} \label{lemma:est_err}
    Consider the system \eqref{eqn:system_Model}–\eqref{eqn:Y_k} under the information structures 
    \eqref{eqn:Control_info}–\eqref{eqn:update_info}. 
    Then the estimation error dynamics are given as follows:  

    \begin{align}
        \mathcal{E}_{k+1} 
        &= (1 - D_{k+1-\tau})\left(
            a\Bigl(X_k - \mathbb{E}[ X_k \mid I^\mathcal{C}_{k}, D_{k+1-\tau}=0 ]\Bigr) + W_k
          \right) \nonumber \\[4pt]
        &\quad + D_{k+1-\tau}\left(
            \sum_{j=0}^{\tau-1} a^{\tau-1-j}\,W_{k+1-\tau+j}
          \right).
        \label{eqn:est_error_wDelay}
    \end{align}
\end{lemma}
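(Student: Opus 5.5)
The plan is to compute $\hat X_{k+1}=\mathbb{E}[X_{k+1}\mid I^\mathcal{C}_{k+1}]$ directly from the recursion $I^\mathcal{C}_{k+1}=I^\mathcal{C}_k\cup\{U_k,Y_{k+1}\}$ with $Y_{k+1}=D_{k+1-\tau}X_{k+1-\tau}$. I will split into the two events $\{D_{k+1-\tau}=1\}$ and $\{D_{k+1-\tau}=0\}$, and then recombine them with the indicator weights $D_{k+1-\tau}$ and $1-D_{k+1-\tau}$. Throughout, I interpret the channel output as revealing whether a packet arrived, so that $D_{k+1-\tau}$ is $I^\mathcal{C}_{k+1}$-measurable. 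This is the standard convention, and it also holds almost surely when $X$ has a density, since then $X_{k+1-\tau}\neq 0$. I will state this convention explicitly at the start.

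\textbf{Step 1 (an independence property).} The key auxiliary fact is that the block $(W_{k+1-\tau},\dots,W_k)$ is independent of $I^\mathcal{C}_{k+1}$. I will prove it by induction on the causal ordering.
\begin{itemize}
\item $X_j$ is a function of $X_0$, $W_0,\dots,W_{j-1}$ and $U_0,\dots,U_{j-1}$.
\item $D_j=f_j(I^\mathcal{S}_j)$ therefore depends only on $X_0$, $W_{0:j-1}$ and past controls.
\item Each $Y_i$ with $i\le k+1$ involves only $D_{i-\tau}$ and $X_{i-\tau}$ with $i-\tau\le k+1-\tau$.
\item Each $U_i=\gamma_i(I^\mathcal{C}_i)$ is generated from the same information.
\end{itemize}
Together these show that every element of $I^\mathcal{C}_{k+1}$ is a measurable function of $X_0$ and $W_{0:k-\tau}$. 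Since the $W$'s are i.i.d. and independent of $X_0$, the claim follows. In particular $\mathbb{E}[W_{k+1-\tau+j}\mid I^\mathcal{C}_{k+1}]=0$ for $j=0,\dots,\tau-1$, and likewise $\mathbb{E}[W_k\mid I^\mathcal{C}_k,D_{k+1-\tau}]=0$.

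\textbf{Step 2 (case $D_{k+1-\tau}=1$).} Unrolling the dynamics gives
\begin{equation*}
X_{k+1}=a^\tau X_{k+1-\tau}+\sum_{j=0}^{\tau-1}a^{\tau-1-j}\bigl(bU_{k+1-\tau+j}+W_{k+1-\tau+j}\bigr).
\end{equation*}
Here $X_{k+1-\tau}=Y_{k+1}$ is known to the controller, and the controls $U_{k+1-\tau},\dots,U_k$ all lie in $I^\mathcal{C}_{k+1}$. Taking the conditional expectation and applying Step 1 removes the noise terms. Hence $\mathcal{E}_{k+1}=\sum_{j=0}^{\tau-1}a^{\tau-1-j}W_{k+1-\tau+j}$.

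\textbf{Step 3 (case $D_{k+1-\tau}=0$).} Here $Y_{k+1}=0$ carries only the event $\{D_{k+1-\tau}=0\}$. Moreover $U_k=\gamma_k(I^\mathcal{C}_k)$ adds nothing new. So conditioning on $I^\mathcal{C}_{k+1}$ is the same as conditioning on $(I^\mathcal{C}_k,D_{k+1-\tau}=0)$. Using $X_{k+1}=aX_k+bU_k+W_k$ together with Step 1, I obtain
\begin{equation*}
\hat X_{k+1}=a\,\mathbb{E}[X_k\mid I^\mathcal{C}_k,D_{k+1-\tau}=0]+bU_k,
\end{equation*}
and subtracting this from $X_{k+1}$ gives the first bracket in \eqref{eqn:est_error_wDelay}.

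I expect the main obstacle to be Step 1 and the measurability bookkeeping behind it. Because $D$ depends on the state and hence on past noise, the scheduler's decisions could in principle leak information about recent disturbances. The argument must use precisely that $Y_i$ only involves quantities indexed up to $i-\tau$, and must handle the coupling through the controls carefully, with $U$ allowed to depend on the whole of $I^\mathcal{C}$. Note that in Step 3 the conditional mean $\mathbb{E}[X_k\mid I^\mathcal{C}_k,D_{k+1-\tau}=0]$ is not simplified further, because $\{D_{k+1-\tau}=0\}$ does carry information about the noise up to time $k-\tau$. This is exactly why the lemma keeps that term explicit.
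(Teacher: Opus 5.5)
Your proposal is correct and follows essentially the same route as the paper. Both split on whether the delayed packet arrives: $D_{k+1-\tau}=1$ (the paper's time $t_k+\tau$) gives the $\tau$-step noise sum, and $D_{k+1-\tau}=0$ (the paper's time $t_k+\tau+m+1$) reduces the conditioning to $(I^\mathcal{C}_k, D_{k+1-\tau}=0)$, where $W_k$ drops out. Your Step 1 (every element of $I^\mathcal{C}_{k+1}$ is a function of $X_0$ and $W_{0:k-\tau}$) and your explicit convention that $D_{k+1-\tau}$ is revealed to the controller make rigorous what the paper asserts only informally in its step (b) and uses implicitly when it conditions on $D_{k+1-\tau}=0$.
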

\begin{proof}
For $\tau \geq 1$ and the scheduling instant $t_k$ , we obtain
\begin{align*}
    \mathcal{E}_{t_k+\tau} 
    &= a^{\tau}\Bigl(X_{t_k} - \mathbb{E}[X_{t_k} \mid I^\mathcal{C}_{t_k+\tau}]\Bigr) 
       + \sum_{j=0}^{\tau-1} a^{\tau-1-j}\Bigl(W_{t_k+j} - \mathbb{E}[W_{t_k+j}\mid I^\mathcal{C}_{t_k+\tau}]\Bigr) \\[4pt]
    &\overset{(a)}{=} \sum_{j=0}^{\tau-1} a^{\tau-1-j}\Bigl(W_{t_k+j} - \mathbb{E}[W_{t_k+j}\mid I^\mathcal{C}_{t_k+\tau}]\Bigr) \\[4pt]
    &\overset{(b)}{=} \sum_{j=0}^{\tau-1} a^{\tau-1-j} W_{t_k+j}.
\end{align*}
Step (a) holds because $X_{t_k}$ is included in $I^\mathcal{C}_{t_k+\tau}$, so its conditional error vanishes.  
Step (b) follows because, for $j=0,\dots,\tau-1$, the noise variables $W_{t_k+j}$ are zero-mean and independent of the sigma-algebra \(\mathcal{F}_{k,\tau} := \sigma\!\bigl(I^\mathcal{C}_{t_k+\tau}\bigr)\), which contains only information available up to time $t_k$ (including the scheduling decision $D_{t_k}$)  but not the realizations of $W_{t_k}, \ldots, W_{t_k+\tau-1}$. 
Hence, \( \mathbb{E}[\,W_{t_k+j} \mid \mathcal{F}_{k,\tau}] = 0 \).
Finally, for $m \geq 0$, we have
\begin{align*}
    \mathcal{E}_{t_k+\tau+m+1} 
    &= aX_{t_k+\tau+m} + bU_{t_k+\tau+m} + W_{t_k+\tau+m} \\
    &\quad - \mathbb{E}\!\left[aX_{t_k+\tau+m} + bU_{t_k+\tau+m} + W_{t_k+\tau+m} 
        \mid I^\mathcal{C}_{t_k+\tau+m}, \, D_{t_k+m+1}=0\right] \\[4pt]
    &= a\Bigl(X_{t_k+\tau+m} - \mathbb{E}[X_{t_k+\tau+m} \mid I^\mathcal{C}_{t_k+\tau+m}, D_{t_k+m+1}=0]\Bigr) + W_{t_k+\tau+m}.
\end{align*}
Here we used the fact that $U_{t_k+\tau+m}\in I^\mathcal{C}_{t_k+\tau+m+1}$ and that
\[
D_{t_k+m+1} \;\leftarrow\; I^\mathcal{S}_{t_k+m+1} \;\leftarrow\; X_{t_k+m+1} \;\leftarrow\; \{W_{t_k}, \dots, W_{t_k+m}\}.
\]
\end{proof}

We now examine the conditional expectation term appearing in \eqref{eqn:est_error_wDelay}. Due to the following dependency chain:
\begin{equation}
    D_{k+1-\tau} 
    \;\leftarrow\; I^\mathcal{S}_{k+1-\tau} 
    \;\leftarrow\; X_{k+1-\tau} 
    \quad
    \text{and} \quad X_{k} \;\leftarrow\; X_{k+1-\tau}.
    \label{eqn:dep_chain_X}
\end{equation}
there exist admissible scheduling policies under which
\begin{equation}
  \mathbb{E}\!\left[X_k \mid I^\mathcal{C}_{k},\, D_{k+1-\tau}=0 \right]
  \;\neq\;
  \mathbb{E}\!\left[X_k \mid I^\mathcal{C}_{k} \right].
  \label{eqn:cond_Xk_delay}
\end{equation}
In much of the existing literature, this dependence is implicitly neglected by assuming that~\eqref{eqn:cond_Xk_delay} holds with equality, which simplifies the analysis. However, this assumption is not valid in general and requires justification.

\begin{definition}
For all $\tau \geq 1$, we define the subclass of admissible scheduling policies as
\begin{align*}
    \Pi' \triangleq \Big\{ \pi \in \Pi \;\Big|\; 
    &\mathbb{E}\!\left[X_{k} \mid 
       I^\mathcal{C}_{k},\, D_{k+1-\tau}=0\right] = \mathbb{E}\!\left[X_{k} \mid I^\mathcal{C}_{k}\right] \Big\}.
    \numberthis \label{defn:Pi_prime}
\end{align*}
\end{definition}

\noindent This subclass enforces that conditioning on future scheduling decisions does not alter the conditional mean of the state. Additionally, note that for policies that do not belong to \(\Pi'\), the following strict inequality holds:
\begin{equation}
    \forall\, \pi \in \Pi \setminus \Pi':\quad
    X_k - \mathbb{E}\left[X_k \mid I^\mathcal{C}_{k},\, D_{k+1-\tau}=0\right]
    \;\neq\; \mathcal{E}_k.
\end{equation}

We now introduce the core subclass of policies, referred to as \emph{symmetric policies}. 
To this end, we consider the system state evolution. 
Since the system is linear, the state at time \(t_k + m\) can be expressed as
\begin{equation}
    X_{t_k + m} = a^{m} X_{t_k} + \sum_{j=0}^{m-1} a^{m-1-j} \, b \, U_{t_k + j} + \sum_{j=0}^{m-1} a^{m-1-j} \, W_{t_k + j}.
    \label{eqn:X_tk+m}
\end{equation}
This expression follows directly from the linear dynamics and does not depend on the delay parameter~\(\tau\).

We define the accumulated disturbance since the most recent update as
\begin{equation}
    S_m \triangleq \sum_{j=0}^{m-1} a^{m-1-j} \, W_{t_k + j} \label{defn:S_m}
\end{equation}
With this definition, the conditional expectation of the system state given the controller information can be written as
\begin{equation}
    \mathbb{E}\left[X_{t_k+m} \mid I^\mathcal{C}_{t_k+m}\right] = a^{m} X_{t_k} + \sum_{j=0}^{m-1} a^{m-1-j} \, b \, U_{t_k + j} + \mathbb{E}\left[S_m \mid I^\mathcal{C}_{t_k+m} \right] , \quad \forall \; m \geq \tau
    \label{eqn:Exp_X_tk+m}
\end{equation}
This term is critical, as it directly determines the optimal controller in~\eqref{eqn:optimal_controller}.

We now shift our attention to the expectation term in~\eqref{eqn:Exp_X_tk+m}, namely \(\mathbb{E}\!\left[S_m \mid I^\mathcal{C}_{t_k+m}\right]\).
Due to the following dependency chain
\begin{equation}
    I^\mathcal{C}_{t_k+m} \;\leftarrow\; D_{t_k+m-\tau} \;\leftarrow\; I^\mathcal{S}_{t_k+m-\tau} \;\leftarrow\; X_{t_k+m-\tau} \;\leftarrow\; W_{t_k+m-\tau-1}, 
    \label{eqn:dep_chain_Wk}
\end{equation}
there exist admissible scheduling policies under which the disturbance \(W_k\) becomes statistically dependent on the scheduling decision. For instance, if the scheduler employs a threshold-based policy of the form \(D_k = \mathds{1}_{\{W_{k-1} > \beta\}}\) for some \(\beta > 0\), then the event \(D_k\) directly reveals information about the disturbance \(W_{k-1}\), inducing statistical dependence. Consequently, the conditional mean of the disturbance may be nonzero, i.e.,
\begin{equation}
    \mathbb{E}\left[ W_{t_k+j} \mid I^\mathcal{C}_{t_k+m} \right] \;\neq\; 0, \qquad \forall \; m>j
    \label{eqn:cond_Wk}
\end{equation}
and hence,
\begin{equation}
    \mathbb{E}\left[ S_m \mid I^\mathcal{C}_{t_k+m} \right] \;\neq\; 0
\end{equation}

\begin{definition}
    \noindent We define the class of scheduling policies for which the accumulated disturbance remains conditionally independent of the controller’s information over multiple steps as
    \begin{equation}
    \Pi'' = \left\{ \pi \in \Pi \;\middle|\; 
    \mathbb{E}\left[ S_m \mid I^\mathcal{C}_{t_k + m} \right] 
    = 0,\; \forall\, k,\,  m \in \mathbb{N} \right\}, \label{defn:sym_pol}
    \end{equation}
    where $S_m$ is defined in \eqref{defn:S_m} and refer to such policies as \textit{symmetric policies}.
\end{definition}

Intuitively, these are referred to as \emph{symmetric policies} because, under such policies, the conditional distribution of the accumulated disturbance remains symmetric (zero mean) with respect to the controller’s information, irrespective of the scheduling decisions. In contrast, non-symmetric policies may induce a bias in the conditional expectation of the accumulated disturbance.

\begin{proposition} \label{prop:Subset}
    The set of symmetric  policies defined in~\eqref{defn:sym_pol} is a subset of \(\Pi'\); that is, \(\Pi'' \subseteq \Pi'\).
\end{proposition}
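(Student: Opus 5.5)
We need to show that for $\pi\in\Pi''$, $\mathbb{E}[X_k\mid I^\mathcal{C}_k, D_{k+1-\tau}=0]=\mathbb{E}[X_k\mid I^\mathcal{C}_k]$. The plan is to decompose $X_k$ relative to the most recent update visible to the controller, using the linear expansion \eqref{eqn:X_tk+m}. Both conditional expectations then reduce to a known deterministic part plus the conditional mean of the accumulated disturbance $S_m$. The symmetry condition \eqref{defn:sym_pol} forces that conditional mean to zero under both conditionings.

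\textbf{Step 1: decomposition.} Fix $k$ and let $t$ denote the most recent update time whose packet has reached the controller by time $k$, so $X_t\in I^\mathcal{C}_k$. Write $k=t+m$ with $m\ge\tau$. By \eqref{eqn:X_tk+m},
\begin{equation*}
X_k = a^m X_t + \sum_{j=0}^{m-1} a^{m-1-j} b\,U_{t+j} + S_m .
\end{equation*}
Every $U_{t+j}$ with $j\le m-1$ belongs to $I^\mathcal{C}_k$, and so does $X_t$. Hence these terms are measurable with respect to $I^\mathcal{C}_k$ and remain measurable after the additional conditioning on $D_{k+1-\tau}=0$. Both sides of the identity defining $\Pi'$ therefore differ only through the terms $\mathbb{E}[S_m\mid I^\mathcal{C}_k]$ and $\mathbb{E}[S_m\mid I^\mathcal{C}_k, D_{k+1-\tau}=0]$. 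The first vanishes by \eqref{defn:sym_pol}; this is exactly \eqref{eqn:Exp_X_tk+m}.

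\textbf{Step 2: the extra conditioning.} The event $\{D_{k+1-\tau}=0\}$ together with $I^\mathcal{C}_k$ is precisely the controller information at time $k+1$ on the branch where no new packet arrives. That is, $I^\mathcal{C}_{k+1}=\{I^\mathcal{C}_k, U_k, Y_{k+1}=0\}$. Here $U_k$ is a function of $I^\mathcal{C}_k$ and so adds nothing, and when $D_{k+1-\tau}=0$ the most recent update is still $t$. Thus
\begin{equation*}
\mathbb{E}[S_m\mid I^\mathcal{C}_k, D_{k+1-\tau}=0] = \mathbb{E}[S_m\mid I^\mathcal{C}_{t+m+1}]\ \text{on } \{D_{k+1-\tau}=0\}.
\end{equation*}
Apply \eqref{defn:sym_pol} at horizon $m+1$ and use $S_{m+1}=aS_m+W_{t+m}$. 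The disturbance $W_{t+m}=W_k$ is independent of $I^\mathcal{C}_{k+1}$, since by the dependency chain \eqref{eqn:dep_chain_Wk} the information $I^\mathcal{C}_{k+1}$ only involves disturbances up to $W_{k-\tau}$. This gives $a\,\mathbb{E}[S_m\mid I^\mathcal{C}_{k+1}]=0$.

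\textbf{Step 3: the case $a=0$, and the main obstacle.} If $a=0$, then $S_m=W_{k-1}$, which is independent of $I^\mathcal{C}_{k+1}$ because $\tau\ge1$. So its conditional mean is $0$ directly.

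I expect the main obstacle to be Step 2. The difficulty is interpreting \eqref{defn:sym_pol}, which is stated with the conditioning index tied to $m$, so that it legitimately covers conditioning on the one-step-later no-update event. It also requires carefully verifying that $I^\mathcal{C}_{k+1}$ does not reveal $W_k$. I would handle this by explicitly identifying the sigma-algebra generated by $\{I^\mathcal{C}_k, D_{k+1-\tau}=0\}$ with that of $I^\mathcal{C}_{k+1}$ restricted to the no-update event. Combining the two vanishing conditional means with Step 1 gives equality of the two conditional expectations of $X_k$, hence $\pi\in\Pi'$.
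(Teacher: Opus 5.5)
Your argument is correct for $a\neq 0$, and it is more direct than the paper's. The paper works with the estimation error. It expands $\mathcal{E}_{t_k+\tau+m}$ and asserts that under \eqref{defn:sym_pol} all conditional-mean corrections vanish, so the error is the pure accumulated disturbance \eqref{eqn:est_error_sym_pol}. From this it obtains the no-update recursion $\mathcal{E}_{k+1}=a\mathcal{E}_k+W_k$ in \eqref{eqn:est_error_sep}, and reads off the $\Pi'$ identity by matching it against $a\bigl(X_k-\mathbb{E}[X_k\mid I^\mathcal{C}_k,D_{k+1-\tau}=0]\bigr)+W_k$ from Lemma~\ref{lemma:est_err}.

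You bypass the error recursion and compare the two conditional means of $X_k$ directly. You identify $\sigma(I^\mathcal{C}_k,D_{k+1-\tau})$ with $\sigma(I^\mathcal{C}_{k+1})$ on $\{D_{k+1-\tau}=0\}$ and then peel $S_{m+1}=aS_m+W_k$. This makes explicit a horizon shift the paper passes over. Its expansion leaves the conditional mean of $S_m$ given the \emph{later} information $I^\mathcal{C}_{t_k+\tau+m}$, yet it dismisses this term by citing the definition only at $I^\mathcal{C}_{t_k+m}$. Your apply-at-a-later-horizon-and-peel argument is exactly the missing bridge. In exchange, the paper's route delivers the closed-form error and its recursion, which it reuses in $P_3$. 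Step 2 also tacitly assumes the controller observes the arrival indicator $D_{k+1-\tau}$ itself, not merely $Y_{k+1}$; the paper assumes this implicitly too.

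Step 3, however, is wrong and cannot be repaired. When $a=0$ and $\tau=1$, $I^\mathcal{C}_{k+1}$ contains $Y_{k+1}=D_kX_k$ with $X_k=bU_{k-1}+W_{k-1}$. So $W_{k-1}$ is \emph{not} independent of $I^\mathcal{C}_{k+1}$; your independence claim needs $\tau\ge 2$.

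In fact the proposition itself fails in this case. With $a=0$, $S_m=W_{t+m-1}$ is independent of $I^\mathcal{C}_{t+m}$ for every policy, so every admissible policy lies in $\Pi''$. Now take the admissible rule $D_k=\mathds{1}_{\{X_k-bU_{k-1}>\beta\}}$ with $\beta>0$ and $\Pr(W_0>\beta)>0$. It gives $\mathbb{E}[X_k\mid I^\mathcal{C}_k,D_k=0]=bU_{k-1}+\mathbb{E}[W_{k-1}\mid W_{k-1}\le\beta]<bU_{k-1}=\mathbb{E}[X_k\mid I^\mathcal{C}_k]$, so the policy is not in $\Pi'$.

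The paper's proof has the same blind spot: its final matching step sees the conditional mean only through the factor $a$, so it says nothing when $a=0$. Drop Step 3 and state $a\neq 0$ as a hypothesis. Alternatively, if $a=0$, require $\tau\ge 2$, where your independence argument is valid.
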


\begin{proof}
    Consider the system state evolution~\eqref{eqn:X_tk+m} and its conditional expectation~\eqref{eqn:Exp_X_tk+m}. 
    Accordingly, the estimation error at time \(t_k + \tau + m\) is
    \begin{align*}
        \mathcal{E}_{t_k + \tau + m} 
        &= X_{t_k + \tau + m} - \mathbb{E}\left[ X_{t_k + \tau + m} \mid I^\mathcal{C}_{t_k + \tau + m} \right] \\
        &= a^{\tau+m}\; \left(X_{t_k} - \mathbb{E}[X_{t_k} \mid I^\mathcal{C}_{t_k + \tau + m}] \right) \\
        & \qquad \qquad + \sum_{j=0}^{\tau + m-1} a^{\tau + m-1-j} \left(W_{t_k + j} - \mathbb{E}\left[ W_{t_k + j} \mid I^\mathcal{C}_{t_k + \tau + m} \right] \right) \\
        & \overset{(a)}{=} \sum_{j=0}^{\tau + m-1} a^{\tau + m-1-j} \left(W_{t_k + j} - \mathbb{E}\left[ W_{t_k + j} \mid I^\mathcal{C}_{t_k + \tau + m} \right] \right) \\
        & \overset{(b)}{=} \sum_{j=0}^{\tau + m-1} a^{\tau + m-1-j} W_{t_k + j} - \sum_{j=0}^{m-1} a^{m-1-j} \; \mathbb{E}\left[ W_{t_k + j} \mid I^\mathcal{C}_{t_k + \tau + m} \right],
    \end{align*}
    where the step (a) is because $X_{t_k}$ is measurable with respect to $I^\mathcal{C}_{t_k + \tau + m}$, and the step (b) is because $I^\mathcal{C}_{t_k + \tau + m}$ depends on $D_{t_k+m}$ and $D_{t_k+m}$ depends on $\{W_0,...,W_{t_k+m-1}\}$
    
    Under symmetric policies~\eqref{defn:sym_pol}, we have
    \begin{equation}
        \mathcal{E}_{t_k +\tau + m} = \sum_{j=0}^{\tau + m-1} a^{\tau+m-1-j} \, W_{t_k + j}, \label{eqn:est_error_sym_pol}
    \end{equation}
    since \(\mathbb{E}\left[ S_m \mid I^\mathcal{C}_{t_k + m} \right] = 0\), by definition. 

    As a result, the estimation error dynamics reduce to
    \begin{equation}
        \mathcal{E}_{k+1} =
        (1 - D_{k+1-\tau})\left( a\;\mathcal{E}_{k} + W_k \right) \; + \; D_{k+1-\tau}\left( \sum_{j=0}^{\tau-1} a^{\tau-1-j}\,W_{k+1-\tau+j} \right)
        \label{eqn:est_error_sep}
    \end{equation}
    which corresponds to a scenario where
    \begin{equation}
        \mathbb{E}\!\left[X_{k} \mid 
       I^\mathcal{C}_{k},\, D_{k+1-\tau}=0\right] = \mathbb{E}\!\left[X_{k} \mid I^\mathcal{C}_{k}\right] ,
    \end{equation}
    satisfying the condition in~\eqref{defn:Pi_prime}. Hence, \(\pi \in \Pi'' \Rightarrow \pi \in \Pi'\), and thus \(\Pi'' \subseteq \Pi'.\) \footnote{Equality holds in the noise‑free case \(\sigma_W^2=0\).}
\end{proof}

Hence, under symmetric policies, problem \(P_2\) reduces to the following form:
\begin{align*}
  P_3:\quad 
    & \min_{\pi \in \Pi''} \frac{1}{N}\,\mathbb{E}\left[\sum_{k=0}^{N-1} \mathcal{E}_{k}^{2}\right]\\
    \text{s.t.}\quad 
    & \frac{1}{N}\sum_{k=0}^{N-1} D_{k} \;\le\; r_{s} \nonumber \\
    & \mathcal{E}_{k+1} = 
    (1 - D_{k+1-\tau})\left( a\;\mathcal{E}_{k} + W_k \right) \;+ \; D_{k+1-\tau}\left( \sum_{j=0}^{\tau-1} a^{\tau-1-j}\,W_{k+1-\tau+j} \right). \nonumber
\end{align*}

Observe that \(P_3\) depends only on the estimation error dynamics and is independent of the control input. Consequently, the control and scheduling policies decouple, allowing us to focus exclusively on the scheduling policy and thereby invoke the separation principle. However, since not all scheduling policies satisfy the error evolution in~\eqref{eqn:est_error_sep}, it remains to show that symmetric policies are optimal. Establishing this result renders problems \(P_2\) and \(P_3\) equivalent, in other words, separation holds.

\subsection{The Symmetry Assumption in Prior Work}

In this section, we begin by reviewing common triggering mechanisms in the literature that motivate the symmetric policy assumption in the delay-free setting. As noted in Sec.~\ref{sec:Sys_Mdl}, delay-free feedback breaks causality, consequently, works that assume a delay-free setting implicitly adopt a noncausal information structure. We then turn to studies that incorporate feedback delay and show that they likewise rely (often implicitly) on symmetric policies.

We begin by examining various designs of the scheduler information structure \(I^\mathcal{S}_{k}\) and identifying the precise conditions under which the separation principle holds, thereby yielding problem \(P_3\). A central requirement is the removal of the dependency chain in~\eqref{eqn:dep_chain_Wk}. To this end, the literature has focused on two primary triggering strategies: \emph{event‑triggered} schemes and \emph{self‑triggered} schemes~\cite{aastrom2012introduction, park2017wireless, astrom2002comparison}.

In event-triggered mechanisms, the scheduler continuously monitors the system state (or an error signal) and initiates an update only when necessary. Since scheduling decisions depend directly on the system state, both dependency chains in~\eqref{eqn:dep_chain_X} and~\eqref{eqn:dep_chain_Wk} remain intact, and the separation principle generally does not apply.

In contrast, self-triggered mechanisms determine the next update time in advance (e.g., periodic scheduling), without monitoring the system between scheduling instances. In such cases, the scheduling actions are decoupled from the real-time state, effectively breaking the dependency chain in~\eqref{eqn:dep_chain_Wk}, and thus the separation principle holds~\cite{champati2019performance, antunes2019consistent, gatsis2014optimal}.

In addition to the explicit triggering mechanisms discussed above, there exist alternative system model designs that implicitly enforce a self-triggered structure and, consequently, ensure the validity of the separation principle. For instance, if the scheduler is co-located with the controller (i.e., implemented on the controller side), it does not have direct access to the system state. In this case, the scheduling mechanism is functionally equivalent to a self-triggered scheme~\cite{aggarwal2024linear}. At first glance, this modeling choice may appear to impose an additional restriction on the scheduler and potentially complicate the problem formulation. However, this restriction effectively breaks the dependency chain, under which periodic scheduling becomes optimal, as shown in greater detail in~\cite{etcibasi2026freshness}. We also showed that increasing the communication rate, and thereby decreasing the mean age of information, does not necessarily reduce the LQR cost for every scheduling policy.

Alternatively, the same decoupling of information can arise even when the scheduler is not physically co-located with the controller. Specifically, if the controller does not have knowledge of the scheduling policy—namely, it does not have access to the decision rule \( f_k(I^\mathcal{S}_{k}) \) governing the scheduling actions—then the dependency chain in~\eqref{eqn:dep_chain_Wk} is likewise broken. A similar modeling assumption is adopted in~\cite{aggarwal2025interq}, where this dependency is implicitly removed as well. At this point, we emphasize the following critical observation:

\begin{remark}
    From the controller's perspective, knowing the scheduling decisions \(D_k \in \{0,1\}\) does not necessarily imply knowledge of the underlying scheduling rule \(f_k(I^\mathcal{S}_{k})\). If the controller does know this rule (even though updates are received intermittently), the information flow from the scheduler to the controller becomes uninterrupted. In other words, the absence of a transmission can itself carry information.
    However, this phenomenon is policy-dependent rather than intrinsic to event-triggered communication.
    In particular, under symmetric policies, silence is statistically non-informative and does not affect the controller’s conditional estimate.
    \label{remark:no_tx_is_tx}
\end{remark}

Remark~\ref{remark:no_tx_is_tx} can be interpreted as follows: although the scheduler transmits exact state measurements intermittently, it implicitly conveys information at all times by revealing whether a transmission occurred. In effect, even the absence of a transmission restricts the set of possible values that the state could have taken, thereby reducing uncertainty. Thus, the scheduler is constantly providing information—albeit indirectly. A similar observation is made in~\cite{maity2020minimal}. However, symmetric policies eliminate this information coupling. Consequently, separation holds: the control policy is independent of the scheduling actions, and vice versa.

Event-triggered scheduling schemes generally conflict with the requirements of symmetric policies. While some studies impose this assumption explicitly~\cite{molin2009lqg, ramesh2011dual, ramesh2013design}, most adopt it implicitly, even when their system models do not support it~\cite{molin2014price, molin2011optimal, ayan2019age, leong2017event, klugel2020joint, demirel2018tradeoffs, gatsis2016state, molin2014bi, mamduhi2019try, aggarwal2023weighted, vilgelm2017control, lu2023full}. It is also important to note that these works do not account for delays in their system models, resulting in a noncausal information structure. Furthermore, although some of these works consider the presence of measurement noise, our observations remain valid in this setting.

We now investigate the effect of the delay. In the presence of delay, certain conditional 
expectation terms involving the disturbance vanish for all policies. For example, 
when $\tau = 1$,
\[
    \mathbb{E}[ W_k \mid I^\mathcal{C}_{k+1}] = 0.
\]
At first glance, introducing a delay might appear to resolve the dependency issue and restore the validity of the separation principle.  

However, this vanishing property is not a consequence of any policy choice. This does not imply that $\pi \in \Pi''$ (i.e., that $\pi$ is a symmetric policy). 
In particular, even with delay we cannot conclude that
\[
\mathbb{E}\!\left[X_{k} \mid I^\mathcal{C}_{k},\, D_{k+1-\tau}=0\right] 
= \mathbb{E}\!\left[X_{k} \mid I^\mathcal{C}_{k}\right].
\]
In other words, in the delayed scenario the condition in~\eqref{defn:sym_pol} is not guaranteed to hold for arbitrary policies, and hence Proposition~\ref{prop:Subset} does not apply.

If, on the other hand, one assumes that the estimation error dynamics take the form
\begin{equation}
    \mathcal{E}_{k+1} 
    = (1 - D_{k+1-\tau})\bigl(a\,\mathcal{E}_k + W_k\bigr) 
      + D_{k+1-\tau}\!\left(\sum_{j=0}^{\tau-1} a^{\tau-1-j}\,W_{k+1-\tau+j}\right),
    \qquad \tau \geq 1,
    \label{eqn:est_err_delay_wrong}
\end{equation}
then this implicitly enforces the symmetric policy condition~\eqref{defn:sym_pol}. Under this assumption, the separation principle follows. Indeed, several works in the literature introduce delay into the feedback loop and formulate the estimation error dynamics as in \eqref{eqn:est_err_delay_wrong}~\cite{soleymani2021value, wang2021value, xu2004optimal, soleymani2024foundations, wang2024infinite, maity2018optimal, soleymani2022value}. As shown in problem~$P_3$ and \eqref{eqn:est_error_sep}, this formulation implicitly restricts the policy class to symmetric policies, thereby invoking the separation principle.  

In summary, the majority of existing works on optimal scheduling policies in NCSs restrict attention to symmetric policies, either explicitly or implicitly, thereby enforcing separation. However, the optimality of this class of policies has not been established. In the next section, we prove that symmetric policies are optimal.

\section{Optimality of Symmetric Policies}
To prove the optimality of symmetric policies, we employ DP. We first state the DP recursion. Let the value function at time \(k\) be \(V_k(I_k^{\mathcal S})\), where \(I_k^{\mathcal S}\) denotes the information available at the scheduler, and let \(g_k(I_k^{\mathcal S})\) denote the corresponding stage cost. The DP recursion is
\begin{equation}
    V_k(I_k^{\mathcal S})
    = \min_{D_k \in \{0,1\}}
    \left\{
        g_k(I_k^{\mathcal S})
        + \mathbb{E}\!\left[ V_{k+1}(I_{k+1}^{\mathcal S}) \,\middle|\, I_k^{\mathcal S}, D_k \right]
    \right\}.
    \label{eqn:value_fn}
\end{equation}
Since \(P_3\) imposes a communication-rate constraint, we introduce a remaining budget process \(\{Q_k\}\) defined by
\begin{equation}
    Q_{k+1} = Q_k - D_k,
\end{equation}
with terminal constraint \(Q_N \ge 0\) (equivalently, \(\sum_{k=0}^{N-1} D_k \le N r_s\)). 
Note that \(Q_k\) is determined by \(I_k^{\mathcal S}\). Thus, we do not augment the information state.

Because the controller is delayed by a fixed \(\tau\) steps relative to the scheduler, the effect of a scheduling decision becomes available to the controller only after \(\tau\) time steps. Accordingly, the stage cost can be written as
\begin{align}
    g_k(I_k^{\mathcal S})
    &\triangleq \mathbb{E}\!\left[\mathcal{E}_{k+\tau}^2 \mid I_k^{\mathcal S}\right] \notag\\
    &= \mathbb{E}\!\left[
        \left(
            X_{k+\tau}
            - \mathbb{E}\!\left[ X_{k+\tau} \mid I_{k+\tau}^{\mathcal C} \right]
        \right)^2
        \Big|\, I_k^{\mathcal S}
    \right] \notag\\
    &\overset{(a)}{=}
    \mathbb{E}\!\left[
        \left(
            a^\tau X_k
            + \sum_{j=0}^{\tau-1} a^{\tau-1-j} W_{k+j}
            - \mathbb{E}\!\left[
                a^\tau X_k
                + \sum_{j=0}^{\tau-1} a^{\tau-1-j} W_{k+j}
                \,\Big|\, I_{k+\tau}^{\mathcal C}
            \right]
        \right)^2
        \Big|\, I_k^{\mathcal S}
    \right] \notag\\
    &\overset{(b)}{=}
    \mathbb{E}\!\Bigg[
        a^{2\tau}
        \left(
            X_k
            - \mathbb{E}\!\left[ X_k \mid I_{k+\tau}^{\mathcal C} \right]
        \right)^2
        + \left( \sum_{j=0}^{\tau-1} a^{\tau-1-j} W_{k+j} \right)^2
        \Bigg|\, I_k^{\mathcal S}
    \Bigg] \notag\\
    &\overset{(c)}{=}
    \begin{cases}
        \displaystyle
        \sum_{j=0}^{\tau-1} a^{2(\tau-1-j)} \sigma_W^2,
        & \text{if } k = t_k, \\[1.2ex]
        \displaystyle
        a^{2\tau}
        \left(
            X_k
            - \mathbb{E}\!\left[ X_k \mid I_{k+\tau}^{\mathcal C} \right]
        \right)^2
        + \sum_{j=0}^{\tau-1} a^{2(\tau-1-j)} \sigma_W^2,
        & \text{otherwise.}
    \end{cases}
    \label{eqn:g_k}
\end{align}

where in step (a) we used the state evolution in~\eqref{eqn:X_tk+m} and the fact that \(U_{k+j}\) is \(I^\mathcal{C}_{k+\tau}\)-measurable for all \(0 \le j \le \tau-1\). In step (b), we used the information structure: \(I^\mathcal{C}_{k+\tau}\) depends on \((D_0,\dots,D_k)\), which in turn depends only on past disturbances \((W_0,\dots,W_{k-1})\). Hence, \(W_{k+j}\) is independent of \(I^\mathcal{C}_{k+\tau}\) for all \(j \ge 0\). In step (c), we used the fact that both \(X_k\) and \(\mathbb{E}\!\left[X_k \mid I^\mathcal{C}_{k+\tau}\right]\) are \(I_k^\mathcal{S}\)-measurable. Therefore, the DP recursion becomes
\begin{align}
    V_k(I_k^{\mathcal S})
    = \min_{D_k} \Bigg\{
    & a^{2\tau} \bar{\mathcal{E}}_k^2
      + \mathbb{E}\!\left[ V_{k+1}(I_{k+1}^{\mathcal S}) \mid I_k^{\mathcal S}, D_k=0 \right], \notag\\
    & \mathbb{E}\!\left[ V_{k+1}(I_{k+1}^{\mathcal S}) \mid I_k^{\mathcal S}, D_k=1 \right]
    \Bigg\}
    + \sum_{j=0}^{\tau-1} a^{2(\tau-1-j)} \sigma_W^2 .
    \label{eqn:DP_rec}
\end{align}
where we defined
\begin{equation}
    \bar{\mathcal{E}}_k \triangleq X_k - \mathbb{E}[ X_k \mid I_{k+\tau-1}^{\mathcal C}, U_{k+\tau-1}, Y_{k+\tau}, D_{k}=0 ].
    \label{eqn:E_bar}
\end{equation}
Note that, since $D_k$ is known in~\eqref{eqn:E_bar} (and equals zero), this differs from~\eqref{eqn:est_error_original}, where $D_k$ is a random variable.

The terminal cost for the equivalent problem \(P_2\) is defined as
\begin{equation}
    V_k(I_k^{\mathcal S}) =
    \begin{cases}
        +\infty, & \text{if } Q_k < 0, \\[1.2ex]
        0, & \text{otherwise,}
    \end{cases}
    \qquad \forall\, k \geq N-\tau .
    \label{eqn:terminal_cost}
\end{equation}
This terminal condition is imposed for \(k \ge N-\tau\) because, due to the fixed delay \(\tau\), scheduling decisions \(D_j\) taken after time \(N-\tau\) do not affect the controller within the horizon.

Since there are only two admissible actions at each stage and for each budget level \(Q_k\); we define two action-dependent value functions, denoted by \(V'_{k j d}(I_k^{\mathcal S})\), corresponding to stage \(k\), action \(D_k=d \in \{0,1\}\), and budget \(Q_k=j\).
With this notation, the dynamic programming recursion in~\eqref{eqn:DP_rec} can be written as
\begin{equation}
    V_k(I_k^{\mathcal S})
    = \min_{d \in \{0,1\}} V'_{k j d}(I_k^{\mathcal S}).
    \label{eqn:value_fn_prime}
\end{equation}
Now consider stage \(N-\tau-1\) with remaining budget
\(Q_{N-\tau-1}=1\). Both scheduling actions are feasible under the
inequality constraint. However, this is the final scheduling decision
that can affect the finite-horizon cost, and transmitting an exact state
measurement cannot increase the optimal cost. Therefore, without loss
of optimality, we select \(D_{N-\tau-1}=1\).
Accordingly, the value function is given by
\begin{equation}
    V_{N-\tau-1}(I_{N-\tau-1}^{\mathcal S})\big|_{\,Q_{N-\tau-1}=1}
    = V'_{(N-\tau-1)\,1\,1}(I_{N-\tau-1}^{\mathcal S})
    = \sum_{j=0}^{\tau-1} a^{2(\tau-1-j)} \sigma_W^2 .
    \label{eqn:V_N-tau_Q1}
\end{equation}
At the same stage, but with zero remaining budget \(Q_{N-\tau-1}=0\), the scheduler cannot transmit, and the value function becomes
\begin{equation}
    V_{N-\tau-1}(I_{N-\tau-1}^{\mathcal S})\big|_{\,Q_{N-\tau-1}=0}
    = V'_{(N-\tau-1)\,0\,0}(I_{N-\tau-1}^{\mathcal S})
    = a^{2\tau}\,\bar{\mathcal{E}}_{N-\tau-1}^{\,2}
      + \sum_{j=0}^{\tau-1} a^{2(\tau-1-j)} \sigma_W^2 .
    \label{eqn:V_N-tau_Q0}
\end{equation}

Similarly, we can derive the value functions for stages with sufficient remaining budget.
In particular, these stages correspond to cases where the budget allows scheduling at every subsequent time instant.
This yields the following result.

\begin{lemma}
\label{lemma:DP_Right_Boundry}
Let \(\ell = N-\tau-s\) for some \(s \ge 1\). Then,
\begin{equation}
    V_{\ell}(I_{\ell}^{\mathcal S})\big|_{\,Q_{\ell}=s}
    = V'_{\ell\,s\,1}(I_{\ell}^{\mathcal S})
    = s\!
    \left(
        \sum_{j=0}^{\tau-1} a^{2(\tau-1-j)} \sigma_W^2
    \right),
    \label{eqn:V_l_Q_s+1}
\end{equation}
and
\begin{equation}
     V'_{\ell\,(s-1)\,0}(I_{\ell}^{\mathcal S})
    = a^{2\tau}\bar{\mathcal{E}}_{\ell}^{\,2}
      + s\!
      \left(
          \sum_{j=0}^{\tau-1} a^{2(\tau-1-j)} \sigma_W^2
      \right).
    \label{eqn:V_l-1_Q_s+1}
\end{equation}
\end{lemma}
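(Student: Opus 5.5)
We argue by backward induction on $s$, using the DP recursion \eqref{eqn:DP_rec}, the action-dependent notation \eqref{eqn:value_fn_prime}, and the terminal condition \eqref{eqn:terminal_cost}. Throughout, write $c_\tau \triangleq \sum_{j=0}^{\tau-1} a^{2(\tau-1-j)}\sigma_W^2$ for the constant that every stage contributes through \eqref{eqn:g_k}. The base case $s=1$ is exactly \eqref{eqn:V_N-tau_Q1}: with $Q_{N-\tau-1}=1$ we take $V'_{(N-\tau-1)11}=c_\tau$. For the non-transmitting branch with budget $s-1=0$, \eqref{eqn:V_N-tau_Q0} gives $a^{2\tau}\bar{\mathcal{E}}^2_{N-\tau-1}+c_\tau$. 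Both formulas in the lemma therefore hold for $s=1$.

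For the inductive step, assume both statements hold at $\ell+1=N-\tau-(s-1)$ for every information set $I^{\mathcal S}_{\ell+1}$. Fix stage $\ell=N-\tau-s$.

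\emph{Transmitting branch.} With $Q_\ell=s$ and $D_\ell=1$, we get $Q_{\ell+1}=s-1$. The stage cost is $c_\tau$, since $k=t_k$ in \eqref{eqn:g_k}. By the induction hypothesis, $V_{\ell+1}(I^{\mathcal S}_{\ell+1})=(s-1)c_\tau$ for every realization of $I^{\mathcal S}_{\ell+1}$. This value is deterministic, so the conditional expectation in \eqref{eqn:DP_rec} is trivial, and $V'_{\ell s1}=s\,c_\tau$.

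\emph{Non-transmitting branch.} With budget $s-1$ and $D_\ell=0$, we have $Q_{\ell+1}=s-1$, and the stage cost is $a^{2\tau}\bar{\mathcal{E}}_\ell^2+c_\tau$. The next value is again the deterministic constant $(s-1)c_\tau$. This gives $V'_{\ell(s-1)0}=a^{2\tau}\bar{\mathcal{E}}_\ell^2+s\,c_\tau$, which is \eqref{eqn:V_l-1_Q_s+1}.

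To identify $V_\ell|_{Q_\ell=s}$ with $V'_{\ell s1}$, we compare the two branches available at budget $s$. Transmitting gives $s\,c_\tau$. Not transmitting gives $a^{2\tau}\bar{\mathcal{E}}_\ell^2+c_\tau+\mathbb{E}[V_{\ell+1}\mid Q_{\ell+1}=s]$. By monotonicity, $V_{\ell+1}$ at budget $s$ is at least the universal lower bound $(s-1)c_\tau$: every one of the $s-1$ remaining stages incurs at least $c_\tau$ by \eqref{eqn:g_k}, because the stage cost is $c_\tau$ plus a nonnegative term. The non-transmitting branch therefore costs at least $a^{2\tau}\bar{\mathcal{E}}_\ell^2+s\,c_\tau\ge s\,c_\tau$. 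Hence $D_\ell=1$ is optimal without loss, and $V_\ell|_{Q_\ell=s}=V'_{\ell s1}$.

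The main point needing care is precisely this lower bound. It must hold for the true value function at larger budgets, and not only along the diagonal $Q=s$ covered by the induction hypothesis. We obtain it directly from \eqref{eqn:g_k} and the nonnegativity of $V$, rather than from the induction hypothesis.

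A secondary subtlety is that the recursion conditions on $I^{\mathcal S}_\ell$, which includes possibly non-symmetric past decisions. This causes no difficulty: once the future value is a deterministic constant, the conditional law of $I^{\mathcal S}_{\ell+1}$ plays no role. The only policy-dependent quantity is $\bar{\mathcal{E}}_\ell$, and it is $I^{\mathcal S}_\ell$-measurable.
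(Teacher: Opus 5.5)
Your proof is correct and follows the same route as the paper's: backward induction along the always-transmit diagonal, computing $V'_{\ell s 1}$ and $V'_{\ell(s-1)0}$ from the DP recursion with the deterministic next-stage value $(s-1)c_\tau$. The one place you go beyond the paper is the optimality of $D_\ell=1$ at budget $s$. The paper only asserts this because the scheduler ``can transmit at all subsequent time instants,'' whereas your universal lower bound $V_{\ell+1}\ge (s-1)c_\tau$ (each remaining stage costs at least $c_\tau$) makes that step rigorous.
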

\begin{proof}
    See Appendix~\ref{Appendix_D}.
\end{proof}

\begin{figure}[t]
    \centering
    \includegraphics[width=\linewidth]{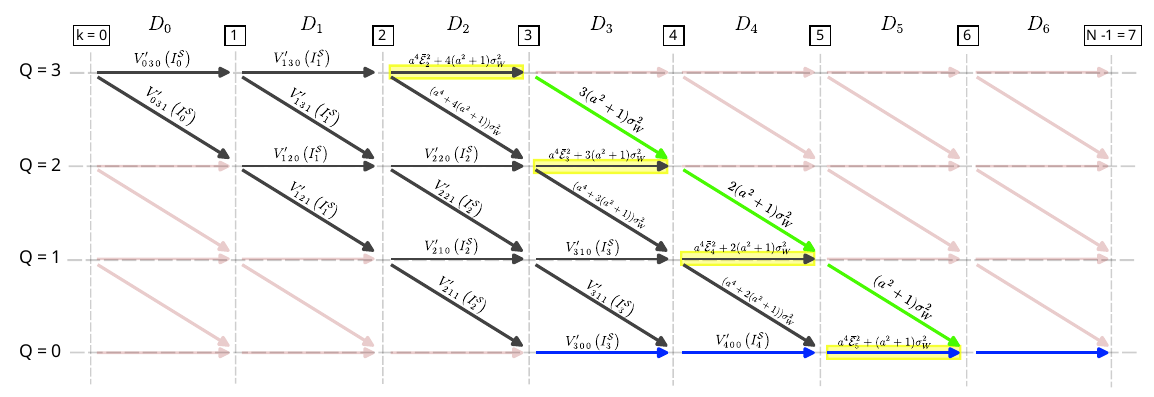}
    \caption{Trellis diagram illustrating the evolution of the value functions across time \(k\) and budget levels \(Q\) for horizon \(N-1=7\), fixed delay \(\tau=2\), and initial budget \(Q_0=3\). At each stage, only two actions are admissible, with the corresponding costs shown on each branch. Infeasible transitions are indicated in red.}
    \label{fig:dp_diagram}
\end{figure}

To better illustrate the cost structures in \eqref{eqn:V_l_Q_s+1} and \eqref{eqn:V_l-1_Q_s+1}, we consider the example trellis diagram shown in Fig.~\ref{fig:dp_diagram}, where the horizon is \(N-1=7\), the delay is fixed to \(\tau=2\), and the initial budget is \(Q_0=3\). Several observations can be made directly from this diagram.

First, note that the estimation error is defined only up to \(k = N-1\) (see the equivalent problem \(P_3\)). Accordingly, the effective horizon is \(N-1\), which already reveals the infeasibility of the delay-free case. Since \(\tau = 2\), the diagram permits transitions to the right by at most one step, thereby demonstrating that zero-delay scheduling is not admissible.

Second, due to the finite budget, once the system reaches the green path, sufficient budget remains to update at every subsequent time step. This path therefore corresponds to the ``always update'' regime, and the associated costs are derived in~\eqref{eqn:V_l_Q_s+1}. In contrast, reaching the blue path indicates that the budget has been fully depleted, which forces the system to follow the ``never update'' regime thereafter. Accordingly, the red branches correspond to infeasible transitions and are excluded from consideration.

Next, the yellow highlighted path corresponds to an intermediate case in which the system chooses not to update at the current stage, while still retaining sufficient budget to update at all subsequent stages. The costs associated with this scenario are derived in~\eqref{eqn:V_l-1_Q_s+1}.

Finally, note that in the trellis diagram shown in Fig.~\ref{fig:dp_diagram}, the cost associated with each branch depends on the time elapsed since the most recent update. Therefore, the underlying state is not fully characterized by the pair $(k, Q)$ alone; rather, an additional state variable is required to capture the time of the most recent update. This effectively introduces an additional dimension to the state space.

We now present the main result of the paper, establishing the optimality of symmetric policies defined in~\eqref{defn:sym_pol}. The proof proceeds by showing that, for the costs associated with the yellow branches, and for any path reaching these branches, the resulting total cost is minimized only under symmetric policies.

\begin{theorem}
\label{thm:sym_policies_are_optimal}
    Consider the system~\eqref{eqn:system_Model}--\eqref{eqn:update_info} with $q,r>0$ and fixed delay $\tau \ge 1$. Under the optimal controller~\eqref{eqn:optimal_controller} and i.i.d.\ zero-mean disturbances, the optimal scheduling policy for the constrained LQR problem $P_2$ is symmetric in the sense of~\eqref{defn:sym_pol}.
\end{theorem}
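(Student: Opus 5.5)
The plan is to work backward through the DP recursion \eqref{eqn:DP_rec}, starting from the boundary values already computed. The key identity behind the argument is the decomposition of the no-update cost term. For any scheduling policy, conditioning on the extra event $\{D_k=0\}$ (or on silence in general) only adds a conditional mean to the accumulated disturbance. By the orthogonality principle,
\[
\mathbb{E}\bigl[\bar{\mathcal{E}}_k^2\bigr] \;=\; \mathbb{E}\bigl[(S - \mathbb{E}[S\mid \cdot\,])^2\bigr] \;=\; \mathbb{E}[S^2] - \mathbb{E}\bigl[(\mathbb{E}[S\mid\cdot\,])^2\bigr],
\]
where $S$ is the relevant accumulated disturbance from \eqref{defn:S_m}. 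This already shows that a biased conditional mean \emph{lowers} the conditional variance. So a naive ``symmetry minimizes each stage cost'' argument fails, and the proof must account for the whole path, as announced before the statement: the yellow branches together with the paths that reach them.

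\textbf{Step 1: base case.} Start with the yellow branches of Lemma~\ref{lemma:DP_Right_Boundry}. There $V'_{\ell\,(s-1)\,0} = a^{2\tau}\bar{\mathcal{E}}_\ell^2 + s\sum_j a^{2(\tau-1-j)}\sigma_W^2$. Take the expectation over all paths reaching such a node with silence since the last update $t_\ell$. The total contribution is $\sum_k a^{2\tau}\mathbb{E}[\bar{\mathcal{E}}_k^2]$ over the silent interval, plus a number of update costs that does not depend on the policy.

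\textbf{Step 2: path-level bound.} Rewrite the silent-interval cost through the law of total expectation over the events $\{D=0\}$ and $\{D=1\}$. The scheduler's decision rule partitions the realizations of $S_m$ into a transmit region and a silent region. The conditional-mean correction gained on the silent region must be paid for by a complementary, nonzero conditional mean on the transmit region, because $\mathbb{E}[S_m]=0$ unconditionally. However, on the transmit region the error is reset by Lemma~\ref{lemma:est_err}, so that correction contributes nothing.

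The comparison I intend is with the symmetrized policy: replace the silent set $A$ by $A\cup(-A)$ split evenly, or equivalently mix the policy $f$ with its reflection $W\mapsto -W$. Because the $W_k$ are symmetric and i.i.d., the reflected policy has the same transmission-count distribution, so it satisfies the rate constraint. It also has the same cost, by symmetry of the dynamics of $\mathcal{E}$ under $W\mapsto -W$.

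The next step is to show that the cost is convex in the policy, in the sense of randomized mixtures. This holds because the mixture controller knows less: it does not know which branch was chosen. I would then show that the controller in the mixture has conditional mean $0$, which makes the mixture a member of $\Pi''$. The main obstacle is here. Mixing \emph{increases} the estimation error (the controller loses information), so convexity points the wrong way. To get around this, I would instead compare against the symmetric threshold on $|S_m|$ with the same silence probability. Among all sets $A$ with $P(S_m\in A)$ fixed, the quantity $\mathbb{E}[S_m^2\mathbf{1}_A]$ is minimized by a symmetric interval $\{|S_m|\le\beta\}$, by a rearrangement/bathtub argument. I would then argue that the conditional-mean gain $P(A)\,(\mathbb{E}[S_m\mid A])^2$ is dominated by the reduction in $\mathbb{E}[S_m^2\mathbf{1}_A]$ obtained by moving to the symmetric interval.

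\textbf{Step 3: induction.} Propagate this comparison backward through the trellis of Fig.~\ref{fig:dp_diagram}, using the induction hypothesis that the continuation values from stage $k+1$ are attained by symmetric threshold policies. The result is that at every node the symmetric choice achieves the minimum over all admissible $D_k$. This yields the claim that the optimal policy lies in $\Pi''$, so $P_2$ and $P_3$ coincide.
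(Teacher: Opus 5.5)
Your orthogonality identity is correct, and it is where you part ways with the paper. Appendix~\ref{Appendix_E} decomposes over the last update time and writes $\bar{\mathcal E}_\ell=S_m-C_m(X_{\ell-m},\pi)$. In \eqref{eqn:expand_quad} it then conditions the outer expectation only on $X_{\ell-m}$ and past decisions, so $\mathbb{E}[S_m\mid\cdot\,]=0$ and the cost becomes a constant plus $C_m^2$, minimized by $C_m=0$. But the node is reached only on the silence event. There $C_m$ is the conditional mean, and the correct value is $\mathbb{E}[S_m^2\mid x,\text{silence}]-C_m^2$, whose first term also depends on the policy. So the paper's branchwise argument is exactly the naive one you rule out, and everything rests on your Step~2.

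There, the decisive claim is that the gain $P(A)(\mathbb{E}[S_m\mid A])^2$ is dominated by the rearrangement reduction. You assert this rather than prove it, and under the paper's hypotheses (i.i.d., zero-mean, symmetric noise) it is false. With $W_k=\pm1$ equiprobable, $A=\{S_1>0\}$ has mass $1/2$ and conditional variance $0$. Every symmetric silent rule of mass $1/2$ (necessarily randomized here) leaves conditional second moment $1$. A mixture of $\mathcal N(\pm1,\epsilon^2)$ does essentially the same with a density.

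The conclusion fails as well, not just the technique. Take $\tau=1$ and the rule $D_k=\mathds{1}_{\{W_{k-1}=-1\}}$. The controller reads $W_{k-1}$ off $D_k$, hence knows $X_k$ at time $k+1$, so $\mathcal E_{k+1}=W_k$ at rate about $1/2$; the compensating bias on the transmit region is free, as you noted. Under a symmetric policy silence is uninformative, so $\mathbb{E}[\mathcal E_{k+1}^2]=\sigma_W^2+a^2\mathbb{E}[(1-D_k)\mathcal E_k^2]$. For $a=1$ the error is odd, hence nonzero, on at least half the steps of every silent run. Thus for $r_s\in(1/2,1)$ and large $N$ (so that budget overflow of the asymmetric rule is negligible), no symmetric policy is optimal.

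You therefore need an extra hypothesis, such as a symmetric unimodal density (as in the simulations). With it the one-stage comparison does hold. For fixed $c$, the bathtub minimizer of $\mathbb{E}[(S-c)^2\mathbf 1_A]$ at fixed mass is an interval centered at $c$. Anderson's inequality ($|S-c|$ is stochastically larger than $|S|$) then makes $c=0$ best.

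Even with unimodality, Step~3 is circular. The controller's estimate is a functional of the whole scheduling policy. So the value at a trellis node depends on the controller's posterior of the accumulated disturbance, not on $\bar{\mathcal E}_k$ alone. Replacing the silent set at one stage changes the law of $S_{m+1}$ on the silence event, and every later stage inherits that change. You therefore cannot presuppose that continuation values are attained by symmetric thresholds and then compare node by node. What is missing is a proof that centered truncation leaves all remaining stages at least as well off.

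The natural route is a majorization (symmetric decreasing rearrangement) argument of the kind Lipsa and Martins used for remote estimation:
\begin{itemize}
\item centered truncation followed by convolution with a symmetric unimodal $W$ keeps the conditional density of $S_m$ symmetric unimodal;
\item by the Riesz rearrangement inequality, this density majorizes the one produced by any other silent set of the same mass;
\item you would still have to prove that the optimal cost-to-go is monotone in this order.
\end{itemize}
Finally, to respect the pathwise budget $Q_k$, you must match silence probabilities conditionally on each decision history, not merely on average.
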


\begin{proof}
    See Appendix~\ref{Appendix_E}.
\end{proof}

Since we proved the optimality of the symmetric policies, it means separation holds and we can find the closed form solution of the optimal controller.

\begin{lemma}
\label{lemma:est_error_evolution}
    The evolution of the state estimate under a fixed delay $\tau \geq 1$ given as
    \begin{equation}
        \mathbb{E}\!\left[ X_{t_k+m+1} \mid I^{\mathcal C}_{t_k+m+1} \right]
        =
        (a-bL)\,\mathbb{E}\!\left[ X_{t_k+m} \mid I^{\mathcal C}_{t_k+m} \right]
        + \mathbb{E}\!\left[ S_{m+1} \mid I^{\mathcal C}_{t_k+m+1} \right]
        - a\,\mathbb{E}\!\left[ S_m \mid I^{\mathcal C}_{t_k+m} \right]
        \label{eqn:est_error_dynamics}
    \end{equation}
    where $m \geq \tau$.
\end{lemma}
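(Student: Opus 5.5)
We prove the identity by writing the conditional mean of the state at two consecutive times via \eqref{eqn:Exp_X_tk+m} and taking the difference. This formula is valid only for indices $m \ge \tau$, which is exactly the range assumed in the lemma.

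\textbf{Step 1: the mean at time $t_k+m+1$.} We apply \eqref{eqn:Exp_X_tk+m} with $m+1$ in place of $m$. We assume no new update reaches the controller between $t_k+m$ and $t_k+m+1$, so the reference update time $t_k$ is the same at both instants. This gives
\begin{equation*}
\mathbb{E}[X_{t_k+m+1}\mid I^{\mathcal C}_{t_k+m+1}] = a^{m+1}X_{t_k} + \sum_{j=0}^{m} a^{m-j} b\,U_{t_k+j} + \mathbb{E}[S_{m+1}\mid I^{\mathcal C}_{t_k+m+1}].
\end{equation*}

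\textbf{Step 2: peel off the last input.} We split the $j=m$ term from the sum and factor $a$ out of the remaining terms:
\begin{equation*}
a\Big(a^m X_{t_k} + \sum_{j=0}^{m-1} a^{m-1-j} b\,U_{t_k+j}\Big) + b\,U_{t_k+m}.
\end{equation*}
By \eqref{eqn:Exp_X_tk+m} at index $m$, the bracketed quantity equals $\mathbb{E}[X_{t_k+m}\mid I^{\mathcal C}_{t_k+m}] - \mathbb{E}[S_m\mid I^{\mathcal C}_{t_k+m}]$.

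\textbf{Step 3: substitute the controller.} We insert the optimal control of Theorem~\ref{theorem:optimal control}, $U_{t_k+m} = -L\,\mathbb{E}[X_{t_k+m}\mid I^{\mathcal C}_{t_k+m}]$. Collecting terms yields
\begin{equation*}
(a-bL)\,\mathbb{E}[X_{t_k+m}\mid I^{\mathcal C}_{t_k+m}] + \mathbb{E}[S_{m+1}\mid I^{\mathcal C}_{t_k+m+1}] - a\,\mathbb{E}[S_m\mid I^{\mathcal C}_{t_k+m}],
\end{equation*}
which is \eqref{eqn:est_error_dynamics}.

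\textbf{Main obstacle: bookkeeping of $t_k$.} The subtle point is whether the same reference time $t_k$ is valid at both $t_k+m$ and $t_k+m+1$. Specifically, one must check that the measurement $Y_{t_k+m+1}$ does not reveal a newer state $X_{t_k+m+1-\tau}$ with $D_{t_k+m+1-\tau}=1$. I would handle this by indexing both sides relative to the latest update visible to the controller. If a new update does arrive, \eqref{eqn:Exp_X_tk+m} must be re-anchored at the new update time, and the decomposition is reapplied there.

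A second detail is that $U_{t_k+j}$ for $j\le m$ must be measurable with respect to $I^{\mathcal C}_{t_k+m+1}$, since these inputs are pulled outside the conditional expectation. This holds by \eqref{eqn:Control_info}.

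Finally, under symmetric policies, which are optimal by Theorem~\ref{thm:sym_policies_are_optimal}, both $S$-terms vanish by \eqref{defn:sym_pol}. The recursion then reduces to $\hat X_{k+1} = (a-bL)\hat X_k$ between updates. This is the closed-form estimator the lemma is meant to support.
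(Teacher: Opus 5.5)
Your argument is correct, and it reaches the identity by a slightly more direct route than the paper. The paper (Appendix~F) starts from the one-step dynamics $X_{t_k+m+1}=aX_{t_k+m}+bU_{t_k+m}+W_{t_k+m}$ and conditions on $I^{\mathcal C}_{t_k+m+1}$. It drops $\mathbb{E}[W_{t_k+m}\mid I^{\mathcal C}_{t_k+m+1}]$ by causality and $\tau\ge 1$, then substitutes $U_{t_k+m}=-L\hat X_{t_k+m}$. Next it isolates the innovation term $a\bigl(\mathbb{E}[X_{t_k+m}\mid I^{\mathcal C}_{t_k+m+1}]-\mathbb{E}[X_{t_k+m}\mid I^{\mathcal C}_{t_k+m}]\bigr)$ and rewrites it via \eqref{eqn:Exp_X_tk+m} as $a$ times the revision of the controller's estimates of past disturbances. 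Finally it adds the (zero) term $\mathbb{E}[W_{t_k+m}\mid I^{\mathcal C}_{t_k+m+1}]$ back in to form $\mathbb{E}[S_{m+1}\mid I^{\mathcal C}_{t_k+m+1}]$.

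You instead difference the closed form \eqref{eqn:Exp_X_tk+m} at indices $m+1$ and $m$. This keeps $W_{t_k+m}$ inside $S_{m+1}$, so you never need the independence fact, which the paper invokes twice in opposite directions. Your proof rests only on measurability of $X_{t_k}$ and of the past inputs, which is exactly where $m\ge\tau$ enters. The paper's longer route has one advantage: it shows explicitly that the correction term is the information the new observation $Y_{t_k+m+1}$ carries about past noise, the mechanism behind Remark~\ref{remark:no_tx_is_tx}.

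Your ``main obstacle'' is not actually an obstacle, and you should drop the assumption that no new update reaches the controller. With $t_k$ held fixed, \eqref{eqn:Exp_X_tk+m} holds at every index $\ge\tau$ regardless of later transmissions. It only requires $X_{t_k}$ and $U_{t_k},\dots,U_{t_k+m}$ to be measurable with respect to the conditioning information, and that holds whether or not a newer update arrives. Whatever a newer measurement reveals is simply absorbed into $\mathbb{E}[S_{m+1}\mid I^{\mathcal C}_{t_k+m+1}]$, which you keep symbolic. Re-anchoring at a newer update time would change the meaning of the $S$-terms, so it would no longer prove the stated identity. Your Steps 1--3, as written, already cover all cases without that assumption.
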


\begin{proof}
    See Appendix~\ref{Appendix_F}.
\end{proof}

\begin{remark}
    Under symmetric policies defined in~\eqref{defn:sym_pol}, the conditional expectations of the accumulated disturbances satisfy
    \(\mathbb{E}[S_{m+1}\mid I^{\mathcal C}_{t_k+m+1}] = a\,\mathbb{E}[S_m\mid I^{\mathcal C}_{t_k+m}]\).
    Consequently, the evolution of the optimal controller reduces to
    \begin{equation}
        \mathbb{E}\!\left[ X_{t_k+m+1} \mid I^{\mathcal C}_{t_k+m+1} \right]
        =
        (a-bL)\,\mathbb{E}\!\left[ X_{t_k+m} \mid I^{\mathcal C}_{t_k+m} \right].
        \label{eqn:sym_controller}
    \end{equation}
\end{remark}

\noindent Therefore, using \eqref{eqn:optimal_controller}, \eqref{eqn:X_tk+m}, and \eqref{eqn:sym_controller}, we obtain that for all \(m \ge \tau\),
\begin{equation}
    U_{t_k+m}
    = -L\,\mathbb{E}\!\left[ X_{t_k+m} \mid I^{\mathcal C}_{t_k+m} \right]
    = -L\,(a-bL)^{m-\tau}
    \left(
    a^{\tau} X_{t_k}
    +
    \sum_{j=0}^{\tau-1} a^{\tau-1-j} b\,U_{t_k+j}
    \right).
    \label{eqn:opt_controller_sym}
\end{equation}
This expression shows that, under symmetric policies, the controller evolves as a purely linear time-invariant system with closed-loop feedback gain \((a-bL)\) after the delay period. The control input at time \(t_k+m\) depends only on the most recent state update \(X_{t_k}\) and the finite sequence of control inputs applied during the delay interval \(\{t_k,\dots,t_k+\tau-1\}\). Importantly, no additional information is conveyed through the absence of transmissions, and the controller dynamics are independent of the scheduling decisions beyond their effect on the update times.

In the next section, we establish that the optimal scheduling policy is a symmetric threshold policy in the accumulated noise~\eqref{defn:S_m}. 
Before proceeding, however, we first show that, under a noise distribution symmetric about zero and symmetric scheduling policies~\eqref{defn:sym_pol}, the controller’s conditional expectation of the noise sequence $\{W_k\}$ remains zero.

\begin{theorem}
\label{thm:symmetric_W}
Consider the system~\eqref{eqn:system_Model}--\eqref{eqn:update_info} with $q,r>0$ and fixed delay $\tau \ge 1$.
Assume $\{W_k\}$ is an i.i.d. noise sequence with a distribution symmetric about zero.
Under symmetric scheduling policies~\eqref{defn:sym_pol}, the controller's conditional estimate of noise remains zero, i.e.,
\begin{equation}
\mathbb{E}[W_{t_k+j} \mid I^{\mathcal C}_{t_k+m}] = 0
\quad \forall\, j,m \ge 0 .
\label{eqn:sym_W}
\end{equation}
\end{theorem}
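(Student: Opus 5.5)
We prove \eqref{eqn:sym_W} with a sign-flip (reflection) argument. Within an inter-update interval, the controller's information depends on the noise only through the scheduling decisions.

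\textbf{Step 1: the controller's information.} Fix an update instant $t_k$ and $m \ge 0$. By \eqref{eqn:Control_info} and \eqref{eqn:Y_k}, $I^{\mathcal C}_{t_k+m}$ is generated by three ingredients:
\begin{itemize}
\item the information up to the update, including $X_{t_k}$, which is independent of $\{W_{t_k+j}\}_{j\ge 0}$;
\item the control inputs, which are themselves $I^{\mathcal C}$-measurable;
\item the scheduling indicators $D_{t_k+1},\dots,D_{t_k+m-\tau}$, all equal to zero before the next update.
\end{itemize}
Hence the only channel through which $W_{t_k+j}$ enters $I^{\mathcal C}_{t_k+m}$ is the event
\[
\mathcal{A}_m=\{D_{t_k+i}=0,\; 1\le i\le m-\tau\},
\]
together with $W_{t_k+j}$ appearing in $X_{t_k+i}$ through $S_i$ in \eqref{eqn:X_tk+m}.

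\textbf{Step 2: odd symmetry of the scheduling rule.} Using the definition \eqref{defn:sym_pol} of $\Pi''$, $\mathbb{E}[S_i\mid I^{\mathcal C}_{t_k+i}]=0$ for all $i$. We first show that, without loss of generality for the cost, the scheduling decisions inside the interval depend on the accumulated disturbance $S_i$ only through an even function, i.e.\ they are invariant under $(W_{t_k},\dots,W_{t_k+i-1})\mapsto -(W_{t_k},\dots,W_{t_k+i-1})$. The justification comes from \eqref{eqn:g_k}: the stage cost depends on $\bar{\mathcal E}_k$ only through $\bar{\mathcal E}_k^2$, which is even in the noise vector. The plan is to show that the zero-conditional-mean property of $S_i$ for every $i\le m$ forces the conditional law of the noise vector given $\mathcal A_m$ to be reflection-invariant. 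If this cannot be extracted directly from \eqref{defn:sym_pol}, the fallback is to symmetrize: average the policy with its reflected version. This preserves the cost, since the cost is even, preserves the rate constraint, and stays in $\Pi''$.

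\textbf{Step 3: the reflection argument.} Define the map $\phi$ that flips the signs of all $W_{t_k+j}$, $j\ge 0$, while keeping the pre-update history fixed. Because the noise distribution is symmetric and the $W$'s are i.i.d., $\phi$ preserves the joint law. By Step 2 it also leaves $\mathcal A_m$, and hence the $\sigma$-algebra generated by $I^{\mathcal C}_{t_k+m}$, invariant. Controls are affected only through $X$, which we handle by conditioning on the measurable parts. Therefore
\[
\mathbb{E}[W_{t_k+j}\mid I^{\mathcal C}_{t_k+m}]=\mathbb{E}[-W_{t_k+j}\mid I^{\mathcal C}_{t_k+m}],
\]
so the conditional expectation is zero. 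For $j$ beyond the observed window, \eqref{eqn:sym_W} follows directly from independence, as in step (b) of Lemma~\ref{lemma:est_err}.

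\textbf{Main obstacle.} The hard part is Step 2. The definition \eqref{defn:sym_pol} only constrains first moments of the aggregate $S_m$. Passing from this to a statement about each individual $W_{t_k+j}$ requires either the full reflection invariance or an induction on $m$ using the increment relation $S_{m+1}=aS_m+W_{t_k+m}$. I would first try the induction: assume $\mathbb{E}[W_{t_k+j}\mid I^{\mathcal C}_{t_k+m}]=0$ for $j<m$, then use $\mathbb{E}[S_{m+1}\mid\cdot]=0$ and the tower property to peel off the new term. If the individual terms do not separate this way, I would fall back on the symmetrization construction of Step 2.
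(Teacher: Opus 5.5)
Your Steps 1 and 3 are the closing argument of the paper's proof: flip the sign of all post-update disturbances, check that the decision history is unchanged, and conclude that the conditional law of the noise is symmetric. The gap is Step 2, and it is more than a technical obstacle.

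\textbf{Evenness cannot come from the definition of $\Pi''$.} Definition \eqref{defn:sym_pol} constrains only the conditional mean of the aggregate $S_m$, and the conclusion actually fails for some policies in $\Pi''$. (Read \eqref{defn:sym_pol} on the inter-update window, as one must for periodic policies to qualify.) Take $\tau=1$, $a=1$, Gaussian noise, an update at $t$, and set $D_{t+1}=0$, $D_{t+2}=\mathds{1}_{\{W_t\le W_{t+1}\}}$, $D_{t+3}=1$.
Then $S_1$ and $S_2$ are trivially unbiased. On $\{D_{t+2}=0\}$ you get $\mathbb{E}[S_3\mid I^{\mathcal C}_{t+3}]=\mathbb{E}[W_t+W_{t+1}\mid W_t>W_{t+1}]=0$, since $W_t\pm W_{t+1}$ are independent. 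Yet $\mathbb{E}[W_t\mid I^{\mathcal C}_{t+3}]=\sigma_W/\sqrt{\pi}\neq 0$.

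So your first plan cannot work, and neither can the induction via $S_{m+1}=aS_m+W_{t_k+m}$. The induction also fails on its own terms: the conditioning changes from $I^{\mathcal C}_{t_k+m}$ to $I^{\mathcal C}_{t_k+m+1}$, so the hypothesis cannot be reused.

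\textbf{What the paper does instead.} A proof must \emph{select} a policy, and the paper does this with a DP step you only gesture at. Evenness of the stage cost $g_k$ is not enough; the whole cost-to-go must be even. The paper shows $V_k(\epsilon)=V_k(-\epsilon)$ by backward induction: the terminal cost is even, and $-a\epsilon+W_k\overset{d}{=}-(a\epsilon+W_k)$. Hence an optimal \emph{deterministic} rule with $D_k^*(\epsilon)=D_k^*(-\epsilon)$ exists. The reflection is then applied to that policy, which is the threshold policy later used in Lemmas~\ref{lemma:Diagonal} and~\ref{lemma:Horizontal}.

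\textbf{Your symmetrization fallback.} It can be turned into an existence statement, but as written it is not justified.
Averaging $\pi$ with its reflection needs a fair coin private to the scheduler. That lies outside $\Pi$, where $D_k=f_k(I^{\mathcal S}_k)$ is deterministic.
If the controller sees the coin, the individual biases $\mathbb{E}[W_{t_k+j}\mid I^{\mathcal C}_{t_k+m}]$ reappear in each branch.
If the coin is hidden, the controller's estimator is recomputed for the mixture. Then ``the cost is even'' is not why the cost is unchanged. The real reason is that $\pi\in\Pi''$ already makes the estimation error equal to the accumulated noise under both components and under the mixture. For a policy with nonzero aggregate bias, hiding the coin would change the estimator and hence the cost.

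Even once repaired, the fallback only yields an equal-cost randomized surrogate of each $\pi\in\Pi''$. It does not give the property for $\pi$ itself, which the example shows can fail, nor for the deterministic DP-optimal policy. The paper's route stays inside $\Pi$ and lands directly on the policy that is actually used.
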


\begin{proof}
    Under symmetric scheduling policies~\eqref{defn:sym_pol}, the problem reduces to $P_3$ with estimation error dynamics (under the absence of an update)
    \begin{equation}
        \mathcal{E}_{k+1} = a \mathcal{E}_k + W_k,
        \label{eqn:sym_est_error_dyn}
    \end{equation}
    where $\{W_k\}$ is an i.i.d. noise sequence symmetric about zero.
    Let the value function be defined as in~\eqref{eqn:value_fn}.  
    
    We first show that $V_k(\epsilon)$ is an even function of $\epsilon$, i.e.,
    \[
    V_k(\epsilon) = V_k(-\epsilon).
    \]
    
    \noindent \textit{Step 1: Terminal condition.}  
    From~\eqref{eqn:terminal_cost}, the terminal cost is quadratic in $\mathcal{E}_N$, hence even.
    
    \noindent \textit{Step 2: Induction hypothesis.}  
    Assume $V_{k+1}(\cdot)$ is even.
    
    \noindent Define the action-dependent cost-to-go
    \begin{equation}
        \hat V_k(\epsilon,d)
        =
        \epsilon^2
        +
        \mathbb{E}\!\left[
            V_{k+1}(\mathcal{E}_{k+1})
            \mid \mathcal{E}_k=\epsilon, d
        \right].
    \end{equation}
    
    \noindent Because $W_k$ is symmetric about zero, we have \(W_k \overset{d}{=} -W_k. \)
    Hence,
    \begin{equation}
    a(-\epsilon) + W_k
    = -a\epsilon + W_k
    \overset{d}{=}
    -a\epsilon - W_k
    = -(a\epsilon + W_k).
        \label{eqn:sym_of_est_error}
    \end{equation}\
    
    \noindent We now compute:
    \begin{align}
        \hat{V}_k(-\epsilon, d) & = \epsilon^2 + \mathbb{E}[V_{k+1}(\mathcal{E}_{k+1}) \mid \mathcal{E}_k=-\epsilon, d] \notag \\
        & = \epsilon^2 + \mathbb{E}[V_{k+1}(a\;\mathcal{E}_k+W_k) \mid \mathcal{E}_k=-\epsilon, d] \notag \\
        & = \epsilon^2 + \mathbb{E}[V_{k+1}(a\;(-\mathcal{E}_k)+W_k) \mid \mathcal{E}_k=\epsilon, d] \notag \\
        & \overset{(a)}{=} \epsilon^2 + \mathbb{E}[V_{k+1}(-(a\;\mathcal{E}_k+W_k)) \mid \mathcal{E}_k=\epsilon, d] \notag \\
        & = \epsilon^2 + \mathbb{E}[V_{k+1}(-\mathcal{E}_{k+1}) \mid \mathcal{E}_k=\epsilon, d] \notag \\
        & \overset{(b)}{=}  \epsilon^2 + \mathbb{E}[V_{k+1}(\mathcal{E}_{k+1}) \mid \mathcal{E}_k=\epsilon, d] \notag \\
        & = \hat{V}_k(\epsilon, d)
        \label{eqn:sym_of_value_fn}
    \end{align}
    where (a) follows from the distributional symmetry of the estimation error dynamics in~\eqref{eqn:sym_of_est_error}, and (b) follows from the induction hypothesis that $V_{k+1}(\cdot)$ is an even function. Thus $\hat V_k(\epsilon,d)$ is even in $\epsilon$ for every $d$. 
    Since
    \[
    V_k(\epsilon)
    =
    \min_d \hat V_k(\epsilon,d),
    \]
    it follows that $V_k(\epsilon)$ is also even. Hence, the optimal action is
    \begin{equation}
    	D^*_k(\epsilon) \triangleq \arg \min_d V_k(\epsilon,d)
    \end{equation}
    so because of \eqref{eqn:sym_of_value_fn}
    \(
    D^*_k(\epsilon) = D^*_k(-\epsilon)
    \)
    which means that there exists an optimal policy that is even in the
    estimation error. Under~\eqref{eqn:sym_est_error_dyn}, simultaneous
    sign reversal of the estimation error and disturbance produces the
    sign-reversed error trajectory while leaving all scheduling decisions
    unchanged. Since the disturbances are i.i.d.\ and symmetric about zero,
    their conditional distributions given the resulting scheduling history
    remain symmetric. Therefore,
    \(\mathbb{E}[W_{t_k+j}\mid I_{t_k+m}^{\mathcal C}]=0\) for
    \(0\le j<m\), which establishes~\eqref{eqn:sym_W}.
\end{proof}

\section{Optimal Scheduling Policy}
Having established the optimality of symmetric scheduling policies and derived the corresponding optimal controller, we now characterize the optimal scheduling policy by solving the associated DP. We first consider the value function under zero budget, which corresponds to the blue path in Fig.~\ref{fig:dp_diagram}.
\begin{lemma}
\label{lem:zero_budget_recursion}
    Suppose that, for \(Q_k=0\) and \(D_k=0\),
    \begin{equation}
        V'_{k\,0\,0}\!\left(I_k^{\mathcal S}\right)
        =
        s_{k0}\,\bar{\mathcal{E}}_k^{2}
        +
        c_{k00}\,\sigma_W^{2}, \qquad \forall \; k\leq N-\tau-1.
    \end{equation}
    Then, for \(Q_{k-1}=0\) and \(D_{k-1}=0\),
    \begin{equation}
        V'_{(k-1)\,0\,0}\!\left(I_{k-1}^{\mathcal S}\right)
        =
        s_{(k-1)0}\,\bar{\mathcal{E}}_{k-1}^{2}
        +
        c_{(k-1)00}\,\sigma_W^{2},
        \label{eqn:V_k-1_00}
    \end{equation}
    where
    \begin{align}
        s_{(k-1)0} &\triangleq a^{2\tau} + a^{2}s_{k0}, \\
        c_{(k-1)00} &\triangleq s_{k0} + c_{k00} + \sum_{j=0}^{\tau-1} a^{2(\tau-1-j)} .
    \end{align}
\end{lemma}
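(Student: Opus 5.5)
We plan a one-step backward induction along the zero-budget (blue) path of the trellis. The starting point is the recursion \eqref{eqn:DP_rec}. When $Q_{k-1}=0$, the only admissible action is $D_{k-1}=0$, so $Q_k=0$ as well and the scheduler must again choose $D_k=0$. Hence $V_k(I_k^{\mathcal S}) = V'_{k\,0\,0}(I_k^{\mathcal S})$, and
\begin{equation*}
V'_{(k-1)\,0\,0}(I_{k-1}^{\mathcal S}) = a^{2\tau}\bar{\mathcal{E}}_{k-1}^{2} + \sum_{j=0}^{\tau-1} a^{2(\tau-1-j)}\sigma_W^2 + \mathbb{E}\!\left[ s_{k0}\bar{\mathcal{E}}_k^{2} + c_{k00}\sigma_W^2 \,\middle|\, I_{k-1}^{\mathcal S}, D_{k-1}=0\right].
\end{equation*}
The proof therefore reduces to computing $\mathbb{E}[\bar{\mathcal{E}}_k^2 \mid I_{k-1}^{\mathcal S}, D_{k-1}=0]$.

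The key step is a one-step evolution of $\bar{\mathcal{E}}$ in the absence of an update. We work within the symmetric class, which is justified by Theorem~\ref{thm:sym_policies_are_optimal}. By Theorem~\ref{thm:symmetric_W}, the controller's conditional mean of every accumulated disturbance is zero. Consequently, $\mathbb{E}[X_k \mid I^{\mathcal C}_{k+\tau-1},U_{k+\tau-1},Y_{k+\tau},D_k=0]$ equals $a\,\mathbb{E}[X_{k-1}\mid \cdot\,,D_{k-1}=0] + bU_{k-1}$, with no correction for $W_{k-1}$. Following the same computation as in Lemma~\ref{lemma:est_err} and Proposition~\ref{prop:Subset}, this gives
\begin{equation*}
\bar{\mathcal{E}}_k = a\,\bar{\mathcal{E}}_{k-1} + W_{k-1} \quad \text{on } \{D_{k-1}=0\}.
\end{equation*}
Here we use that $U_{k-1}$ is known to the controller and that silence is non-informative under symmetric policies, as in Remark~\ref{remark:no_tx_is_tx}.

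Squaring and conditioning on $I_{k-1}^{\mathcal S}$ then gives the expectation. Since $\bar{\mathcal{E}}_{k-1}$ is $I_{k-1}^{\mathcal S}$-measurable, and $W_{k-1}$ is zero-mean with variance $\sigma_W^2$ and independent of $I_{k-1}^{\mathcal S}$ (as $X_{k-1}$ depends only on $W_0,\dots,W_{k-2}$), the cross term vanishes. We obtain
\begin{equation*}
\mathbb{E}[\bar{\mathcal{E}}_k^2\mid I_{k-1}^{\mathcal S},D_{k-1}=0] = a^2\bar{\mathcal{E}}_{k-1}^2+\sigma_W^2.
\end{equation*}
Substituting into the recursion and collecting terms yields coefficient $a^{2\tau}+a^2 s_{k0}$ on $\bar{\mathcal{E}}_{k-1}^2$ and constant $(s_{k0}+c_{k00}+\sum_{j=0}^{\tau-1}a^{2(\tau-1-j)})\sigma_W^2$, which are exactly the claimed $s_{(k-1)0}$ and $c_{(k-1)00}$. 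The base case is \eqref{eqn:V_N-tau_Q0}, which has $s=a^{2\tau}$ and $c=\sum_j a^{2(\tau-1-j)}$, so the form propagates for all $k\le N-\tau-1$.

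The main obstacle is justifying the error recursion $\bar{\mathcal{E}}_k = a\bar{\mathcal{E}}_{k-1}+W_{k-1}$ rigorously. The information sets in \eqref{eqn:E_bar} at consecutive times differ by one controller observation and by conditioning on $D_k=0$ versus $D_{k-1}=0$. One must show that the additional conditioning does not shift the conditional mean, which is precisely the membership in $\Pi'$ granted by Proposition~\ref{prop:Subset} and Theorem~\ref{thm:symmetric_W}. The plan is to write both estimates explicitly via \eqref{eqn:Exp_X_tk+m} with $t_k$ fixed, since no update occurs in between. Their difference is then $a$ times the previous error plus $W_{k-1}$, minus the conditional means of the accumulated disturbances, and these conditional means vanish under symmetry.
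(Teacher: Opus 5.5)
Your proposal is correct and follows the paper's proof essentially step for step: you apply the DP recursion \eqref{eqn:DP_rec} with the forced action $D_k=0$, use the open-loop error update $\bar{\mathcal E}_k=a\bar{\mathcal E}_{k-1}+W_{k-1}$ to get $\mathbb{E}[\bar{\mathcal E}_k^2\mid I^{\mathcal S}_{k-1},D_{k-1}=0]=a^2\bar{\mathcal E}_{k-1}^2+\sigma_W^2$, and collect coefficients, with the same base case \eqref{eqn:V_N-tau_Q0}. The one difference is that you justify the error update through the symmetry results (Theorems~\ref{thm:sym_policies_are_optimal} and~\ref{thm:symmetric_W}), whereas the paper only uses that no scheduling occurs under zero budget, and that is already sufficient: $Q_k=0$ is known from $I^{\mathcal C}_{k+\tau-1}$, so conditioning on $D_k=0$ and observing $Y_{k+\tau}=0$ add no information, and $W_{k-1}$ is independent of $I^{\mathcal C}_{k+\tau-1}$, so the lemma need not depend on the symmetry theorems.
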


\begin{proof}
    We recall that
    \begin{equation}
        V'_{(N-\tau-1)\,0\,0}\!\left(I^{\mathcal S}_{N-\tau-1}\right)
        =
        a^{2\tau}\,\bar{\mathcal E}_{N-\tau-1}^{2}
        +
        \sum_{j=0}^{\tau-1} a^{2(\tau-1-j)} \sigma_W^{2}.
        \label{eqn:V_N-tau_00}
    \end{equation}
    We now proceed with the induction step. Using the DP recursion in \eqref{eqn:DP_rec}, we obtain
    \begin{align}
        V'_{(k-1)\,0\,0}\!\left(I_{k-1}^{\mathcal S}\right)
        &= a^{2\tau} \bar{\mathcal{E}}_{k-1}^2
        + \mathbb{E}\!\left[
        s_{k0}\,\bar{\mathcal{E}}_k^{2}
        + c_{k00}\,\sigma_W^{2}
        \;\middle|\;
        I^{\mathcal S}_{k-1},\; D_{k-1}=0
        \right]
        + \sum_{j=0}^{\tau-1} a^{2(\tau-1-j)} \sigma_W^2 \notag\\[1ex]
        & \overset{(a)}{=}
        a^{2\tau}\,\bar{\mathcal{E}}_{k-1}^{2}
        + s_{k0}\big(a^{2}\bar{\mathcal{E}}_{k-1}^{2}+\sigma_W^{2}\big)
        + c_{k00}\,\sigma_W^{2}
        + \sum_{j=0}^{\tau-1} a^{2(\tau-1-j)} \sigma_W^2 \notag\\
        &=
        \big(a^{2\tau}+a^{2}s_{k0}\big)\bar{\mathcal{E}}_{k-1}^{2}
        + \bigg(
        s_{k0}+c_{k00}
        + \sum_{j=0}^{\tau-1} a^{2(\tau-1-j)}
        \bigg)\sigma_W^{2} \notag\\
        &=
        s_{(k-1)0}\,\bar{\mathcal{E}}_{k-1}^{2}
        + c_{(k-1)00}\,\sigma_W^{2}.
    \end{align}
    In step~(a), we used the system dynamics \eqref{eqn:system_Model} and the fact that under zero budget no scheduling occurs, so the estimation error evolves open-loop with additive disturbance variance~\(\sigma_W^2\).
\end{proof}

Next, we characterize the value function when the remaining budget is one and the scheduler is activated, i.e., when \(Q_{k-1}=1\) and \(D_{k-1}=1\), which leaves zero budget for all subsequent stages. In the example shown in Fig.~\ref{fig:dp_diagram}, these costs correspond to
\(V'_{2\,1\,1}\!\left(I_2^{\mathcal S}\right)\) and
\(V'_{3\,1\,1}\!\left(I_3^{\mathcal S}\right)\).
\begin{lemma}
\label{lemma:D_1_Q_1}
    Given that
    \begin{equation}
        V'_{k\,0\,0}\left(I_k^{\mathcal S}\right)
        =
        s_{k0}\,\bar{\mathcal{E}}_k^{2}
        +
        c_{k00}\,\sigma_W^{2}, \qquad \forall \; k \leq N - \tau -1
    \end{equation}
    the value function at the previous stage under \(Q_{k-1}=1\) and \(D_{k-1}=1\) is given by
    \begin{equation}
       V'_{(k-1)\,1\,1}\left(I_{k-1}^{\mathcal S}\right)
        =
        c_{(k-1)11}\,\sigma_W^{2},
    \end{equation}
    where 
    \begin{equation}
        c_{(k-1)11} \triangleq s_{k0} + c_{k00} + \left(\sum_{j=0}^{\tau-1} a^{2(\tau-1-j)} \right)
        = c_{(k-1)00}.
    \end{equation}
\end{lemma}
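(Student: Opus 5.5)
We apply the DP recursion \eqref{eqn:DP_rec} at stage \(k-1\) with \(Q_{k-1}=1\) and the action \(D_{k-1}=1\). Choosing \(D_{k-1}=1\) removes the term \(a^{2\tau}\bar{\mathcal E}_{k-1}^2\) from the stage cost. The remaining stage cost is \(\sum_{j=0}^{\tau-1}a^{2(\tau-1-j)}\sigma_W^2\), and the budget moves to \(Q_k=0\). Hence
\[
V'_{(k-1)\,1\,1}(I^{\mathcal S}_{k-1}) = \mathbb{E}\!\left[V_k(I^{\mathcal S}_k)\,\middle|\,I^{\mathcal S}_{k-1},D_{k-1}=1\right] + \sum_{j=0}^{\tau-1}a^{2(\tau-1-j)}\sigma_W^2 .
\]
With \(Q_k=0\) the only feasible action is \(D_k=0\), so \(V_k=V'_{k\,0\,0}=s_{k0}\bar{\mathcal E}_k^2+c_{k00}\sigma_W^2\) by hypothesis. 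It therefore remains to compute \(\mathbb{E}[\bar{\mathcal E}_k^2\mid I^{\mathcal S}_{k-1},D_{k-1}=1]\).

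The key step is to show that this conditional second moment equals \(\sigma_W^2\). After an update at time \(k-1\), the controller eventually learns \(X_{k-1}\) exactly. By Theorem~\ref{thm:sym_policies_are_optimal} and Theorem~\ref{thm:symmetric_W}, we may restrict to symmetric policies, under which the error evolves by \eqref{eqn:est_error_sep}. We then express \(\bar{\mathcal E}_k\) as \(X_k\) minus its conditional mean given the controller information with \(D_k=0\). Writing \(X_k = aX_{k-1}+bU_{k-1}+W_{k-1}\), the terms \(aX_{k-1}\) and \(bU_{k-1}\) cancel because both are measurable with respect to that information. By symmetry, \(\mathbb{E}[W_{k-1}\mid\cdot\,]=0\), so \(\bar{\mathcal E}_k=W_{k-1}\). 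The update resets the accumulated disturbance, i.e. \(S_1=W_{k-1}\) in \eqref{defn:S_m}.

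Since \(W_{k-1}\) is independent of \(I^{\mathcal S}_{k-1}\) and of \(D_{k-1}\), we get \(\mathbb{E}[\bar{\mathcal E}_k^2\mid I^{\mathcal S}_{k-1},D_{k-1}=1]=\sigma_W^2\). Substituting yields
\[
V'_{(k-1)\,1\,1}=\Bigl(s_{k0}+c_{k00}+\sum_{j=0}^{\tau-1}a^{2(\tau-1-j)}\Bigr)\sigma_W^2 = c_{(k-1)11}\sigma_W^2 .
\]
This coefficient matches \(c_{(k-1)00}\) from Lemma~\ref{lem:zero_budget_recursion}, which proves the claimed equality.

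The main obstacle is the justification \(\bar{\mathcal E}_k=W_{k-1}\). Definition \eqref{eqn:E_bar} conditions on \(I^{\mathcal C}_{k+\tau}\) while also fixing \(D_k=0\). We must check that \(X_{k-1}\) lies in this \(\sigma\)-algebra, which holds because \(Y_{k-1+\tau}=X_{k-1}\). We must also check that the extra conditioning on \(D_k=0\) introduces no bias in \(W_{k-1}\). We will argue the latter through the evenness of the optimal scheduler proved in Theorem~\ref{thm:symmetric_W}: \(D_k\) is an even function of \(W_{k-1}\), and \(W_{k-1}\) has a symmetric distribution, so \(\mathbb{E}[W_{k-1}\mid D_k=0,\ldots]=0\).

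Even so, the conditional second moment given \(D_k=0\) need not equal \(\sigma_W^2\). This causes no difficulty here, because the expectation we need is taken conditional on \(I^{\mathcal S}_{k-1}\) only, not on \(D_k\), and so it averages over \(D_k\).
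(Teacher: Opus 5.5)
Your proof is correct and follows essentially the same route as the paper. You apply \eqref{eqn:DP_rec} with $D_{k-1}=1$, note that $Q_k=0$ forces the continuation $V'_{k\,0\,0}$, and show $\mathbb{E}[\bar{\mathcal E}_k^2\mid I^{\mathcal S}_{k-1},D_{k-1}=1]=\sigma_W^2$, because $aX_{k-1}$ and $bU_{k-1}$ are controller-measurable after the update while $W_{k-1}$ is independent of the past; your one-step derivation also covers the paper's separate base case at $k=N-\tau-1$. The appeal to symmetric policies and Theorem~\ref{thm:symmetric_W} is harmless but unnecessary: at zero budget $D_k=0$ is forced and so carries no information about $W_{k-1}$ (this is the ``state independent'' observation the paper uses), which also makes your closing remark about averaging over $D_k$ moot.
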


\begin{proof}
    We know that
    \begin{equation}
        V'_{(N-\tau-1)\,0\,0}\!\left(I^{\mathcal S}_{N-\tau-1}\right)
        =
        a^{2\tau}\,\bar{\mathcal E}_{N-\tau-1}^{2}
        +
        \sum_{j=0}^{\tau-1} a^{2(\tau-1-j)} \sigma_W^{2}.
    \end{equation}
    Consequently, using the DP recursion~\eqref{eqn:DP_rec}, we obtain
    \begin{equation}
        V'_{(N-\tau-2)\,1\,1}\!\left(I^{\mathcal S}_{N-\tau-2}\right)
        =
        \left(
            a^{2\tau}
            +
            2\sum_{j=0}^{\tau-1} a^{2(\tau-1-j)}
        \right)\sigma_W^{2},
    \end{equation}
    where we used the fact that the decision \(D_{N-\tau-1}\) is state independent. 
    We now proceed with the induction step. For a general stage \(k\), using \eqref{eqn:V_k-1_00}, we have
    \begin{align}
        V'_{(k-1)\,1\,1}\!\left(I^{\mathcal S}_{k-1}\right)
        &=
        \mathbb{E}\!\left[
            s_{k0}\,\bar{\mathcal E}_k^{2}
            + c_{k00}\,\sigma_W^{2}
            \;\middle|\;
            I^{\mathcal S}_{k-1},\, D_{k-1}=1
        \right]
        + \sum_{j=0}^{\tau-1} a^{2(\tau-1-j)} \sigma_W^{2} \notag\\
        &=
        s_{k0}\,
        \mathbb{E}\!\left[
            \bar{\mathcal E}_k^{2}
            \;\middle|\;
            I^{\mathcal S}_{k-1},\, D_{k-1}=1
        \right]
        +
        \left(
            c_{k00}
            +
            \sum_{j=0}^{\tau-1} a^{2(\tau-1-j)}
        \right)\sigma_W^{2}.
    \end{align}
    Now expand the error term under an update:
    \begin{align}
    \mathbb{E}\!\left[
    \bar{\mathcal E}_k^{2}
    \;\middle|\;
    I^{\mathcal S}_{k-1},\, D_{k-1}=1
    \right]
    &=
    \mathbb{E}\!\left[
    \Big(a X_{k-1} + W_{k-1}
    - \mathbb{E}[a X_{k-1}+W_{k-1}\mid I^{\mathcal C}_{k+\tau}]
    \Big)^{2}
    \;\middle|\;
    I^{\mathcal S}_{k-1},\, D_{k-1}=1
    \right] \notag\\
    &=
    \mathbb{E}\!\left[
    \Big(W_{k-1}\Big)^{2}
    \;\middle|\;
    I^{\mathcal S}_{k-1},\, D_{k-1}=1
    \right] \notag\\
    &= \mathbb{E}[W_{k-1}^{2}] = \sigma_W^{2},
    \end{align}
    where we used the causality of the scheduling policy, the independence and zero-mean property of \(W_{k-1}\), and the fact that \(D_{k-1}=1\) implies \(X_{k-1}\in I^{\mathcal C}_{k+\tau}\), so that
    \(\mathbb{E}[aX_{k-1}+W_{k-1}\mid I^{\mathcal C}_{k+\tau}] = aX_{k-1}\).
    Hence,
    \begin{equation}
    V'_{(k-1)\,1\,1}\!\left(I^{\mathcal S}_{k-1}\right)
    =
    \left(
    s_{k0}+c_{k00}+ \sum_{j=0}^{\tau-1} a^{2(\tau-1-j)}
    \right)\sigma_W^{2}
    \;\triangleq\;
    c_{(k-1)11}\,\sigma_W^{2}.
    \end{equation}
\end{proof}

To complete the DP solution, two additional induction arguments are required. 
The first induction corresponds to the \emph{diagonal} path (e.g., the sequence
$V'_{1\,3\,1}\!\left(I_1^{\mathcal S}\right)$,
$V'_{2\,2\,1}\!\left(I_2^{\mathcal S}\right)$, and
$V'_{3\,1\,1}\!\left(I_3^{\mathcal S}\right)$ in Fig.~\ref{fig:dp_diagram}). 
The second induction corresponds to the \emph{horizontal} path in the Trellis Diagram (e.g., the sequence
$V'_{2\,1\,0}\!\left(I_2^{\mathcal S}\right)$ and
$V'_{3\,1\,0}\!\left(I_3^{\mathcal S}\right)$ in Fig.~\ref{fig:dp_diagram}).

\begin{lemma}
\label{lemma:Diagonal}
    Suppose that
    \begin{equation}
        V'_{(k+1)j0}\!\left(I^{\mathcal S}_{k+1}\right)
        =
        s_{(k+1)j}\,\bar{\mathcal E}_{k+1}^{2}
        +
        c_{(k+1)j0}\,\sigma_W^{2} + z_{(k+1)j0},
    \end{equation}
    and
    \begin{equation}
        V'_{(k+1)j1}\!\left(I^{\mathcal S}_{k+1}\right)
        =
        c_{(k+1)j1}\,\sigma_W^{2} + z_{(k+1)j1}.
    \end{equation}
    and assume symmetric policies. Then,
    \begin{equation}
        V'_{k(j-1)1}\!\left(I^{\mathcal S}_{k}\right)
        =
        c_{k(j-1)1}\,\sigma_W^{2} + z_{k(j-1)1},
        \label{eqn:V_k_j-1_1}
    \end{equation}
    where
    \begin{equation}
    \begin{aligned}
        c_{k(j-1)1}
        & \triangleq
        c_{(k+1)j0}
        \Pr\!\big(D_{k+1}=0 \mid D_k=1\big) + c_{(k+1)j1}
        \Pr\!\big(D_{k+1}=1 \mid D_k=1\big)
        +
        \sum_{i=0}^{\tau-1} a^{2(\tau-1-i)} \notag \\
        z_{k(j-1)1} & \triangleq \Pr\!\big(D_{k+1}=0 \mid D_k=1\big) \left(z_{(k+1)j0} + s_{(k+1)j} \; \mathbb{E}\left[ W_k^2 \mid I_k^{\mathcal S},\, D_k=1,\, D_{k+1}=0\right] \right) \notag \\
        & \qquad + \Pr\!\big(D_{k+1}=1 \mid D_k=1\big) z_{(k+1)j1}.
    \end{aligned}
    \end{equation}
\end{lemma}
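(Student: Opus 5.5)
The plan is to apply one step of the DP recursion \eqref{eqn:DP_rec} at stage $k$, with budget $Q_k=j-1$ wait---with the action $D_k=1$ fixed, so that the budget moves to $Q_{k+1}$ consistent with the indexing in the hypotheses. Because $D_k=1$, the stage cost is $g_k=\sum_{i=0}^{\tau-1}a^{2(\tau-1-i)}\sigma_W^2$ by case (c) of \eqref{eqn:g_k}. Hence
\[
V'_{k(j-1)1}(I_k^{\mathcal S}) = \mathbb{E}\bigl[V_{k+1}(I_{k+1}^{\mathcal S})\mid I_k^{\mathcal S},D_k=1\bigr]+\sum_{i=0}^{\tau-1}a^{2(\tau-1-i)}\sigma_W^2 .
\]
The continuation term is then split by the law of total expectation over the next decision $D_{k+1}\in\{0,1\}$. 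We write $V_{k+1}=V'_{(k+1)j0}$ on $\{D_{k+1}=0\}$ and $V_{k+1}=V'_{(k+1)j1}$ on $\{D_{k+1}=1\}$, using the optimal (symmetric) decision rule at $k+1$.

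On $\{D_{k+1}=1\}$, the hypothesis gives the constant $c_{(k+1)j1}\sigma_W^2+z_{(k+1)j1}$. Multiplying by $\Pr(D_{k+1}=1\mid D_k=1)$ produces the corresponding terms of $c_{k(j-1)1}$ and $z_{k(j-1)1}$.

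On $\{D_{k+1}=0\}$, we get $s_{(k+1)j}\,\mathbb{E}[\bar{\mathcal E}_{k+1}^2\mid I_k^{\mathcal S},D_k=1,D_{k+1}=0]+c_{(k+1)j0}\sigma_W^2+z_{(k+1)j0}$. The key computation, mirroring Lemma~\ref{lemma:D_1_Q_1}, is $\bar{\mathcal E}_{k+1}=W_k$ when $D_k=1$. Indeed, $X_{k+1}=aX_k+bU_k+W_k$; here $X_k\in I^{\mathcal C}_{k+\tau}\subseteq I^{\mathcal C}_{k+1+\tau}$ and $U_k$ is controller-measurable. Under symmetric policies (Definition \eqref{defn:sym_pol} and Theorem~\ref{thm:symmetric_W}), the conditional mean of $W_k$ given the controller information together with the event $D_{k+1}=0$ is zero. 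This is exactly where symmetry enters: without it, $D_{k+1}=0$ would bias the estimate of $W_k$, as in Remark~\ref{remark:no_tx_is_tx}.

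The main obstacle is that the conditional second moment cannot be replaced by $\sigma_W^2$. Since $D_{k+1}$ depends on $X_{k+1}$, and thus on $W_k$, conditioning on $D_{k+1}=0$ truncates the distribution of $W_k$. For example, under a threshold rule, no update means $|W_k|$ is small. I would therefore keep $\mathbb{E}[W_k^2\mid I_k^{\mathcal S},D_k=1,D_{k+1}=0]$ as an explicit factor, which is why it appears inside $z_{k(j-1)1}$ rather than in the $\sigma_W^2$ coefficient. I would also check that this quantity, and the probabilities $\Pr(D_{k+1}=d\mid D_k=1)$, do not depend on $I_k^{\mathcal S}$ beyond the reset event. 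Both are functions only of the law of $W_k$ and the (symmetric, error-based) rule at $k+1$, since after an update the error restarts independently of the past.

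Collecting the $\sigma_W^2$ coefficients gives $c_{k(j-1)1}$, and collecting the remaining terms gives $z_{k(j-1)1}$. This yields \eqref{eqn:V_k_j-1_1} with no $\bar{\mathcal E}_k^2$ term, as expected after an update.
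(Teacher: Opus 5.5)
Your proposal is correct and follows the paper's proof step for step. You take one DP step with $D_k=1$ and the reset stage cost, split over $D_{k+1}$ and substitute the two hypothesized forms, reduce $\bar{\mathcal E}_{k+1}$ to $W_k$ using $X_k\in I^{\mathcal C}_{k+\tau}$, controller-measurability of $U_k$ and Theorem~\ref{thm:symmetric_W}, and keep $\mathbb{E}[W_k^2\mid I_k^{\mathcal S},D_k=1,D_{k+1}=0]$ inside $z_{k(j-1)1}$, as the paper does; the budget-index mismatch you hesitated over (budget $j-1$ with a transmission cannot lead to budget $j$) comes from the statement itself and does not affect the argument.
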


\begin{proof}
The initial steps have been established in the preceding discussion, hence, we only prove the induction step here.
\begin{align}
    V'_{k(j-1)1}\!\left(I_k^{\mathcal S}\right)
    &=
    \mathbb{E}\!\left[
        V_{k+1}\!\left(I^{\mathcal S}_{k+1}\right)
        \,\middle|\,
        I_k^{\mathcal S},\, D_k=1
    \right]
    +
    \left(\sum_{i=0}^{\tau-1} a^{2(\tau-1-i)}\right)\sigma_W^2
    \notag\\[0.5em]
    &=
    \Pr\!\big(D_{k+1}=0 \mid D_k=1\big)
    \mathbb{E}\!\left[
        s_{(k+1)j}\,\bar{\mathcal E}_{k+1}^2
        +
        c_{(k+1)j0}\,\sigma_W^2 + z_{(k+1)j0}
        \,\middle|\,
        I_k^{\mathcal S},\, D_k=1,\, D_{k+1}=0
    \right]
    \notag\\
    &\quad
    +
    \Pr\!\big(D_{k+1}=1 \mid D_k=1\big)
    \mathbb{E}\!\left[
        c_{(k+1)j1}\,\sigma_W^2 + z_{(k+1)j1}
        \,\middle|\,
        I_k^{\mathcal S},\, D_k=1,\, D_{k+1}=1
    \right] \notag \\
    & \qquad +
    \left(\sum_{i=0}^{\tau-1} a^{2(\tau-1-i)}\right)\sigma_W^2
    \notag\\[0.5em]
    &=
    s_{(k+1)j}\Pr\!\big(D_{k+1}=0 \mid D_k=1\big)
    \mathbb{E}\!\left[
        \bar{\mathcal E}_{k+1}^2
        \,\middle|\,
        I_k^{\mathcal S},\, D_k=1,\, D_{k+1}=0
    \right]
    \notag\\
    &\quad
    + \left(\Pr\!\big(D_{k+1}=0 \mid D_k=1\big)z_{(k+1)j0} + \Pr\!\big(D_{k+1}=1 \mid D_k=1\big)z_{(k+1)j1}\right) \notag \\
    &\quad
    +
    \Bigg(
        \Pr\!\big(D_{k+1}=0 \mid D_k=1\big)c_{(k+1)j0}
        +
        \Pr\!\big(D_{k+1}=1 \mid D_k=1\big)c_{(k+1)j1} \notag \\
        & \qquad +
        \left(\sum_{i=0}^{\tau-1} a^{2(\tau-1-i)}\right)
    \Bigg)\sigma_W^2.
\end{align}
Moreover,
\begin{align}
    &\mathbb{E}\!\left[
        \bar{\mathcal E}_{k+1}^{2}
        \,\middle|\,
        I_k^{\mathcal S},\, D_k=1,\, D_{k+1}=0
    \right] \notag \\
    & \qquad =
    \mathbb{E}\!\Bigg[
        \left(
            aX_k + bU_k + W_k
            -
            \mathbb{E}\!\left[
                aX_k + bU_k + W_k
                \mid
                I_{k+\tau-1}^{\mathcal C},\, D_k=1,\, D_{k+1}=0
            \right]
        \right)^2 \notag \\
        & \qquad \qquad \qquad \qquad \qquad \Bigg|\,
        I_k^{\mathcal S},\, D_k=1,\, D_{k+1}=0
    \Bigg]
    \notag\\
    & \qquad =
    \mathbb{E}[W_k^2 \,\mid\,
        I_k^{\mathcal S},\, D_k=1,\, D_{k+1}=0],
\end{align}
where the last equality follows since \(X_k\) is known when \(D_k=1\), $U_k \in I_{k+\tau}^{\mathcal C}$, the policy is symmetric, and Theorem \ref{thm:symmetric_W}.
Substituting this result yields eqn.~\eqref{eqn:V_k_j-1_1} which completes the induction step.
\end{proof}

\begin{lemma}
\label{lemma:Horizontal}
    Given that
    \begin{equation}
        V'_{kj0}\!\left(I_k^{\mathcal S}\right)
        =
        s_{kj}\,\bar{\mathcal E}_{k}^{2}
        +
        c_{k j 0}\,\sigma_W^{2} + z_{kj0},
        \label{eqn:V_kj0}
    \end{equation}
    and
    \begin{equation} 
        V'_{kj1}\!\left(I_k^{\mathcal S}\right)
        =
        c_{k j 1}\,\sigma_W^{2} + z_{kj1}.
        \label{eqn:V_kj1}
    \end{equation}
    Under symmetric policies,
    \begin{equation}
        V'_{(k-1)j0}\!\left(I^{\mathcal S}_{k-1}\right)
        =
        s_{(k-1)j}\,\bar{\mathcal E}_{k-1}^{2}
        +
        c_{(k-1)j0}\,\sigma_W^{2} + z_{(k-1)j0},
        \label{eqn:V_k-1_j0}
    \end{equation}
    where
    \begin{align}
        s_{(k-1)j}
        &\triangleq
        a^{2\tau} + a^{2}\Pr\!\big(D_k=0 \mid D_{k-1}=0\big) s_{kj},
        \\[0.3em]
        c_{(k-1)j0}
        &\triangleq
        \Pr\!\big(D_k=0 \mid D_{k-1}=0\big)c_{k j 0}
        +
        \Pr\!\big(D_k=1 \mid D_{k-1}=0\big)c_{k j 1} +
        \left(\sum_{i=0}^{\tau-1} a^{2(\tau-1-i)}\right) 
        \notag \\
        z_{(k-1)j0} & \triangleq \Pr\!\big(D_k=0 \mid D_{k-1}=0\big) \left(z_{kj0} + s_{kj} \; \mathbb{E}\left[W_{k-1}^2 \mid  I^{\mathcal S}_{k-1},\, D_{k-1}=0,\, D_k=0 \right] \right) \notag \\
        & \quad + \Pr\!\big(D_k=1\mid D_{k-1}=0\big) z_{kj1} 
        .
    \end{align}
\end{lemma}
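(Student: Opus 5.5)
This is a one-step backward induction on the horizontal path, and I would run it as the mirror image of the proof of Lemma~\ref{lemma:Diagonal}. The only difference is that the action at stage $k-1$ is $D_{k-1}=0$ instead of $D_{k-1}=1$. Since no budget is spent, the remaining budget at stage $k$ is still $j$, which is why the hypotheses \eqref{eqn:V_kj0}--\eqref{eqn:V_kj1} are indexed by the same $j$.

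\textbf{Step 1: apply the DP recursion.} Starting from \eqref{eqn:DP_rec} with $D_{k-1}=0$, I write
\[
V'_{(k-1)j0}(I^{\mathcal S}_{k-1}) = a^{2\tau}\bar{\mathcal E}_{k-1}^2 + \mathbb{E}\bigl[V_k(I^{\mathcal S}_k)\mid I^{\mathcal S}_{k-1},D_{k-1}=0\bigr] + \Bigl(\sum_{i=0}^{\tau-1}a^{2(\tau-1-i)}\Bigr)\sigma_W^2 .
\]

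\textbf{Step 2: split on the next action.} I condition on $D_k\in\{0,1\}$ via the law of total expectation, weighting by $\Pr(D_k=d\mid D_{k-1}=0)$. In each branch I substitute \eqref{eqn:V_kj0} or \eqref{eqn:V_kj1}. The constant terms $c_{kjd}\sigma_W^2+z_{kjd}$ pass through unchanged. They produce the $\Pr$-weighted sum in $c_{(k-1)j0}$, the $z_{kj1}$ term, and the $z_{kj0}$ part of $z_{(k-1)j0}$. The additive $\sum_i a^{2(\tau-1-i)}$ is the stage-cost constant.

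\textbf{Step 3: evaluate the error term on the branch $D_k=0$.} What remains is
\[
s_{kj}\,\mathbb{E}\bigl[\bar{\mathcal E}_k^2\mid I^{\mathcal S}_{k-1},D_{k-1}=0,D_k=0\bigr].
\]
I expand $X_k=aX_{k-1}+bU_{k-1}+W_{k-1}$ and use that $U_{k-1}$ is controller-measurable, so it cancels in the error. Under a symmetric policy, Proposition~\ref{prop:Subset} and the error recursion \eqref{eqn:est_error_sep} show that conditioning on the silent events $D_{k-1}=0$, $D_k=0$ does not shift the controller's conditional mean. Theorem~\ref{thm:symmetric_W} further gives that the controller's estimate of $W_{k-1}$ is zero. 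Together these yield $\bar{\mathcal E}_k = a\bar{\mathcal E}_{k-1}+W_{k-1}$. Squaring, the cross term $2a\bar{\mathcal E}_{k-1}\,\mathbb{E}[W_{k-1}\mid\cdot]$ vanishes, leaving
\[
a^2\bar{\mathcal E}_{k-1}^2 + \mathbb{E}\bigl[W_{k-1}^2\mid I^{\mathcal S}_{k-1},D_{k-1}=0,D_k=0\bigr].
\]
The first piece, multiplied by $\Pr(D_k=0\mid D_{k-1}=0)\,s_{kj}$ and added to $a^{2\tau}$, gives $s_{(k-1)j}$. The second piece gives the $s_{kj}\,\mathbb{E}[W_{k-1}^2\mid\cdot]$ term in $z_{(k-1)j0}$.

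\textbf{Main obstacle: the cross term.} The difficulty is justifying $\mathbb{E}[W_{k-1}\mid I^{\mathcal S}_{k-1},D_{k-1}=0,D_k=0]=0$. The event $D_k=0$ depends on $W_{k-1}$ through $X_k$, so this does not follow from independence. I would obtain it from the evenness argument in the proof of Theorem~\ref{thm:symmetric_W}. Because the optimal symmetric policy satisfies $D_k^*(\epsilon)=D_k^*(-\epsilon)$, the event $\{D_k=0\}$ is invariant under the sign reversal $(\bar{\mathcal E}_{k-1},W_{k-1})\mapsto(-\bar{\mathcal E}_{k-1},-W_{k-1})$. Since $W_{k-1}$ is symmetric, the conditional law of $W_{k-1}$ given $\bar{\mathcal E}_{k-1}$ and $D_k=0$ is then symmetric at $\bar{\mathcal E}_{k-1}=0$. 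For general $\bar{\mathcal E}_{k-1}$ it is only jointly symmetric, so this evenness argument does not give a vanishing cross term by itself. I would therefore rely on the symmetric-policy hypothesis \eqref{defn:sym_pol}, which is stated in terms of the controller's information, to kill the term. I would also carry the conditional second moment of $W_{k-1}$ inside $z_{(k-1)j0}$ rather than replacing it by $\sigma_W^2$, exactly as the statement does.
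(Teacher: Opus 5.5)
Your route matches the paper's: apply \eqref{eqn:DP_rec} with $D_{k-1}=0$, split on $D_k$, substitute \eqref{eqn:V_kj0}--\eqref{eqn:V_kj1}, and reduce the error term via Proposition~\ref{prop:Subset} and Theorem~\ref{thm:symmetric_W}. The gap is exactly the step you flag, and your proposed remedy does not close it. Once the controller-side conditional means are removed, you are left with
\[
\begin{aligned}
\mathbb{E}\bigl[\bar{\mathcal E}_k^2 \mid I^{\mathcal S}_{k-1},D_{k-1}=0,D_k=0\bigr]
&= a^2\bar{\mathcal E}_{k-1}^2
+ 2a\,\bar{\mathcal E}_{k-1}\,\mathbb{E}\bigl[W_{k-1}\mid I^{\mathcal S}_{k-1},D_{k-1}=0,D_k=0\bigr]\\
&\quad + \mathbb{E}\bigl[W_{k-1}^2\mid I^{\mathcal S}_{k-1},D_{k-1}=0,D_k=0\bigr].
\end{aligned}
\]
The middle term is conditioned on the scheduler's information, which contains $X_{k-1}$ and hence $\bar{\mathcal E}_{k-1}$. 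Definition \eqref{defn:sym_pol} and Theorem~\ref{thm:symmetric_W} only concern conditioning on $I^{\mathcal C}$, which integrates $\bar{\mathcal E}_{k-1}$ out. So they cannot make $\mathbb{E}[W_{k-1}\mid I^{\mathcal S}_{k-1},D_{k-1}=0,D_k=0]$ vanish for a fixed $\bar{\mathcal E}_{k-1}\neq 0$.

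In fact no symmetry argument can. Under $(\bar{\mathcal E}_{k-1},W_{k-1})\mapsto(-\bar{\mathcal E}_{k-1},-W_{k-1})$ the product $\bar{\mathcal E}_{k-1}W_{k-1}$ is invariant, not odd. Concretely, take the threshold rule \eqref{eqn:opt_scheduling_pol}, which the paper regards as symmetric in the sense of \eqref{defn:sym_pol}. There $D_k=0$ iff $|a\bar{\mathcal E}_{k-1}+W_{k-1}|<\beta$. Given $\bar{\mathcal E}_{k-1}=\epsilon$ with $a\epsilon>0$ and $0<\beta<\infty$, $W_{k-1}$ is confined to an interval centered at $-a\epsilon$. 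For Gaussian $W$ its conditional mean is then strictly negative, so the cross term is strictly negative. Hence the identity $\mathbb{E}[\bar{\mathcal E}_k^2\mid\cdot]=a^2\bar{\mathcal E}_{k-1}^2+\mathbb{E}[W_{k-1}^2\mid\cdot]$ fails in general, and both $s_{(k-1)j}$ and $z_{(k-1)j0}$ rest on it.

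The paper's own proof has the same hole: its step (a) drops this cross term by citing the same two controller-level results, so it does not supply the missing idea. A correct recursion must do one of two things:
\begin{itemize}
\item keep $s_{kj}\,\mathbb{E}[(a\bar{\mathcal E}_{k-1}+W_{k-1})^2\mid I^{\mathcal S}_{k-1},D_{k-1}=0,D_k=0]$ intact, or
\item absorb the cross term into $z_{(k-1)j0}$, which already depends on $I^{\mathcal S}_{k-1}$; this changes the definition of $z_{(k-1)j0}$ in the statement.
\end{itemize}
Relatedly, for your Step~2 total-expectation split to be valid, its weights must be read as $\Pr(D_k=d\mid I^{\mathcal S}_{k-1},D_{k-1}=0)$. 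These weights also depend on $\bar{\mathcal E}_{k-1}$, so $s_{(k-1)j}$ is not a constant coefficient.
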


\begin{proof}
    Since \(Q_{k-1}=j\) is symmetric, we have
    \begin{align}
        V'_{(k-1)j0}\!\left(I^{\mathcal S}_{k-1}\right)
        &=
        a^{2\tau}\,\bar{\mathcal E}_{k-1}^{2}
        +
        \Pr\!\big(D_k=0 \mid D_{k-1}=0\big)
        \mathbb{E}\!\left[
            s_{kj}\,\bar{\mathcal E}_{k}^{2}
            +
            c_{k j 0}\,\sigma_W^{2} + z_{kj0}
            \,\middle|\,
            I^{\mathcal S}_{k-1},\, D_{k-1}=0,\, D_k=0
        \right]
        \notag\\
        &\quad
        +
        \Pr\!\big(D_k=1 \mid D_{k-1}=0\big)
        \left( c_{k j 1}\,\sigma_W^{2} + z_{kj1} \right)
        +
        \left(\sum_{i=0}^{\tau-1} a^{2(\tau-1-i)}\right)\sigma_W^{2}.
    \end{align}
    Pulling deterministic terms outside the expectations yields
    \begin{align}
        V'_{(k-1)j0}\!\left(I^{\mathcal S}_{k-1}\right)
        & =
        a^{2\tau}\,\bar{\mathcal E}_{k-1}^{2}
        +
        \Pr\!\big(D_k=0 \mid D_{k-1}=0\big)
        \,s_{kj}
        \,\mathbb{E}\!\left[
            \bar{\mathcal E}_{k}^{2}
            \,\middle|\,
            I^{\mathcal S}_{k-1},\, D_{k-1}=0,\, D_k=0
        \right] \notag \\
        & \quad + \left( \Pr\!\big(D_k=0 \mid D_{k-1}=0\big)z_{k j 0} + \Pr\!\big(D_k=1 \mid D_{k-1}=0\big)z_{k j 1}\right) \notag \\
        & \quad +
        \Bigg(
            \Pr\!\big(D_k=0 \mid D_{k-1}=0\big)c_{k j 0}
            +
            \Pr\!\big(D_k=1 \mid D_{k-1}=0\big)c_{k j 1} \notag \\
            & \quad +
            \left(\sum_{i=0}^{\tau-1} a^{2(\tau-1-i)}\right)
        \Bigg)\sigma_W^{2}.
    \end{align}
    where
    \begin{align}
        & \mathbb{E}\!\left[
            \bar{\mathcal E}_{k}^{2}
            \,\middle|\,
            I^{\mathcal S}_{k-1},\, D_{k-1}=0,\, D_k=0
        \right] \notag \\
        &=
        \mathbb{E}\!\left[
            \left(
                aX_{k-1} + W_{k-1}
                -
                \mathbb{E}\!\left[
                    aX_{k-1} + W_{k-1}
                    \mid
                    I^{\mathcal C}_{k+\tau-2},\, D_{k-1}=0,\, D_k=0
                \right]
            \right)^2
            \,\middle|\,
            I^{\mathcal S}_{k-1},\, D_{k-1}=0,\, D_k=0
        \right]
        \notag\\
        & \overset{(a)}{=}
        a^{2}\,
        \mathbb{E}\!\left[
            \left(
                X_{k-1}
                -
                \mathbb{E}\!\left[
                    X_{k-1}
                    \mid
                    I^{\mathcal C}_{k+\tau-2},\, D_{k-1}=0
                \right]
            \right)^2
            \,\middle|\,
            I^{\mathcal S}_{k-1},\, D_{k-1}=0
        \right] \notag \\
        & \qquad +
        \mathbb{E}\left[W_{k-1}^2 \mid I^{\mathcal S}_{k-1},\, D_{k-1}=0,\, D_k=0\right]
        \notag\\
        &=
        a^{2}\,\bar{\mathcal E}_{k-1}^{2}
        +
        \mathbb{E}\left[W_{k-1}^2 \mid I^{\mathcal S}_{k-1},\, D_{k-1}=0,\, D_k=0\right].
    \end{align}
    where the step (a) is because Theorem \ref{thm:symmetric_W} and Prop.\ref{prop:Subset}. Substituting this result establishes \eqref{eqn:V_k-1_j0}.
\end{proof}

Note that the optimal scheduling policy is given by
\begin{equation}
    D_k
    =
    \mathds{1}_{\Big\{
        V'_{kj0}\!\left(I_k^{\mathcal S}\right)
        \; \ge \;
        V'_{kj1}\!\left(I_k^{\mathcal S}\right)
    \Big\}}.
\end{equation}
Hence, using \eqref{eqn:V_kj0} and \eqref{eqn:V_kj1}, the optimal policy can be written as
\begin{equation}
    D_k
    =
    \mathds{1}_{\Big\{
        \bar{\mathcal E}_{k}^{2}
        \; \ge \;
        \frac{(c_{k j 1}-c_{k j 0}) + (z_{kj1} - z_{kj0})/\sigma_W^2}{s_{kj}}\,\sigma_W^{2}
    \Big\}}.
\end{equation}
This is a symmetric threshold policy on the estimation error.

Now, for \(k=t_k+m\), we have
\begin{align}
    \bar{\mathcal E}_k
    &=
    X_k
    -
    \mathbb{E}\!\left[
        X_k
        \mid
        I_{k+\tau-1}^{\mathcal C},\,
        U_{k+\tau-1},\,
        Y_{k+\tau},\,
        D_k=0
    \right]
    \notag\\
    &\overset{(a)}{=}
    \sum_{j=0}^{m-1} a^{m-1-j} W_{t_k+j}
    -
    \sum_{j=0}^{m-1} a^{m-1-j}
    \mathbb{E}\!\left[
        W_{t_k+j}
        \mid
        I_{t_k+\tau+m}^{\mathcal C}
    \right]
    \notag\\
    &\overset{(b)}{=}
    S_m,
\end{align}
where step (a) follows from the system dynamics in \eqref{eqn:system_Model} and the fact that the control inputs are known given \(I_{k+\tau-1}^{\mathcal C}\), and step (b) follows from the symmetry of the scheduling policy.
Consequently, the optimal scheduling policy can be expressed as
\begin{equation}
    D_k
    =
    \mathds{1}_{\Big\{
        S_m^{2}
        \; \ge \;
        \alpha_{kj}\,\sigma_W^{2}
    \Big\}},
    \label{eqn:opt_scheduling_pol}
\end{equation}
where
\(
    \alpha_{kj}
    \triangleq
    \frac{(c_{k j 1}-c_{k j 0}) + (z_{kj1} - z_{kj0})/\sigma_W^2}{s_{kj}},
\)
and \(S_m\) denotes the accumulated disturbance as defined in \eqref{defn:S_m}.

\begin{figure}[t]
    \centering
    \includegraphics[width=0.8\linewidth]{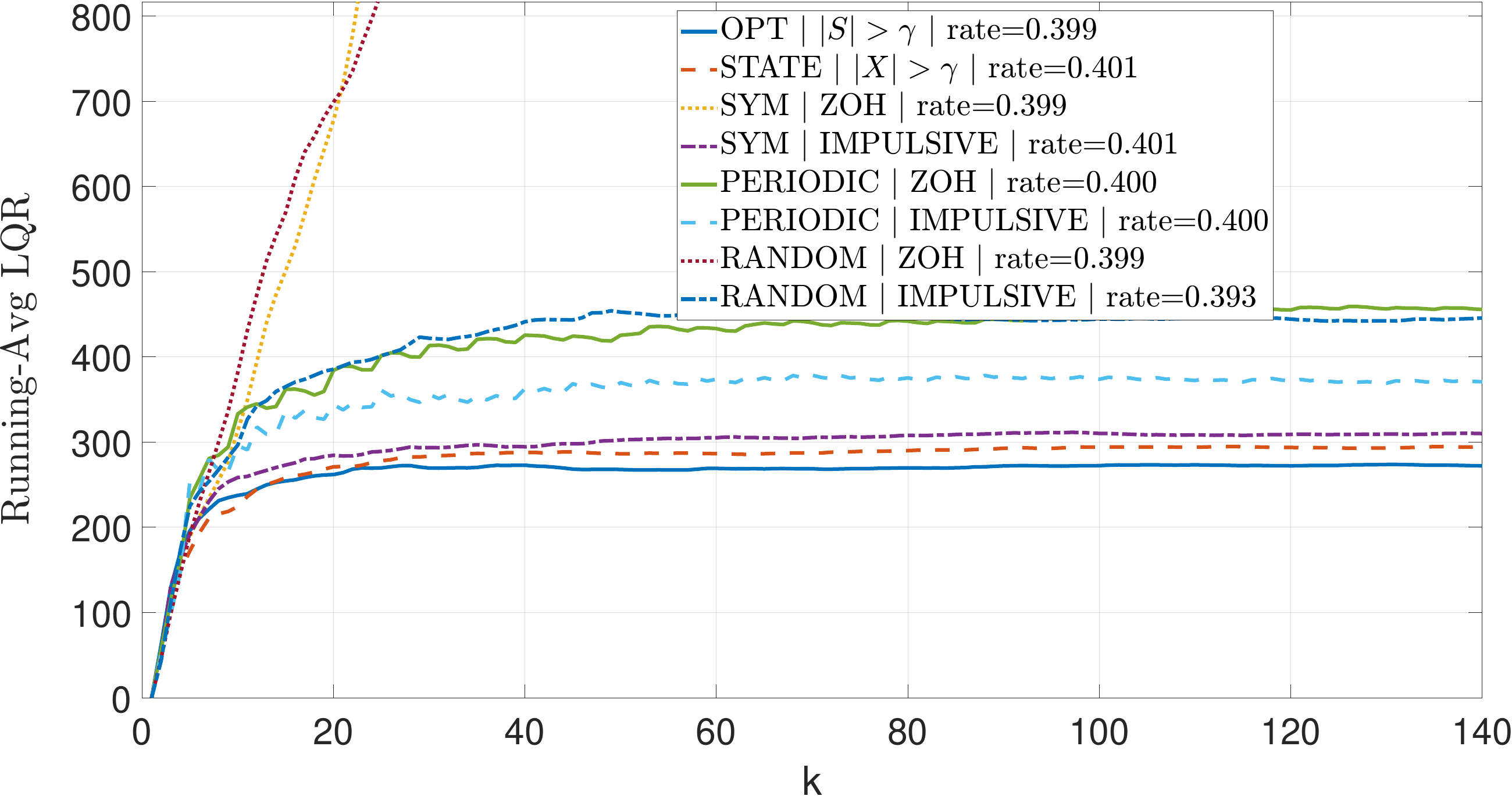}
    \caption{Running-average LQR cost versus time under Gaussian disturbances. All policies are tuned to satisfy the same target communication rate $r_s = 0.4$. The optimal symmetric threshold policy (OPT) achieves the lowest cost, followed closely by symmetric policies, while periodic and random strategies incur higher cost. Results are averaged over 100 Monte Carlo runs.}
    \label{fig:Single_LQR}
\end{figure}

\section{Numerical Results}
In this section, we evaluate the performance of the optimal policy 
\eqref{eqn:opt_controller_sym},\eqref{eqn:opt_scheduling_pol} 
and compare it against commonly used sub-optimal controller and scheduling policy combinations. 
As established earlier, the optimal policy applies to any zero-mean disturbance distribution symmetric about the origin and does not rely on Gaussian assumptions.  We therefore evaluate its performance under multiple symmetric noise models.

As benchmark controller policies, we implement two widely used alternatives: 
\emph{zero-order hold (ZOH)} and \emph{impulsive (IMP)} control.

The ZOH controller applies the linear feedback law~\eqref{eqn:optimal_controller} at the observation time and then holds the control input constant until the next observation. Specifically,
\begin{equation}
    U^{\mathrm{ZOH}}_k =
    \begin{cases}
        -L \hat{X}_k, & if \quad k = t_k + \tau, \\
        U_{k-1}, & \text{otherwise}.
    \end{cases}
    \label{eqn:ZOH_controller}
\end{equation}
In contrast, the impulsive controller concentrates the entire control action at the observation instant and applies no control between update times:
\begin{equation}
    U^{\mathrm{IMP}}_k =
    \begin{cases}
        -\dfrac{a}{b}\,\hat{X}_k, & if \quad  k = t_k + \tau, \\
        0, & \text{otherwise}.
    \end{cases}
    \label{eqn:imp_controller}
\end{equation}
Since the impulsive controller exerts no input during the inter-scheduling interval, the feedback gain is chosen as $-a/b$ rather than $-L$ to ensure a fair comparison. Note that this gain corresponds to the limiting case of the LQR controller as $r \to 0$, where control effort is penalized negligibly and the state is driven to zero in a single step.

\begin{figure}[!t]
    \centering
    \includegraphics[width=0.8\linewidth]{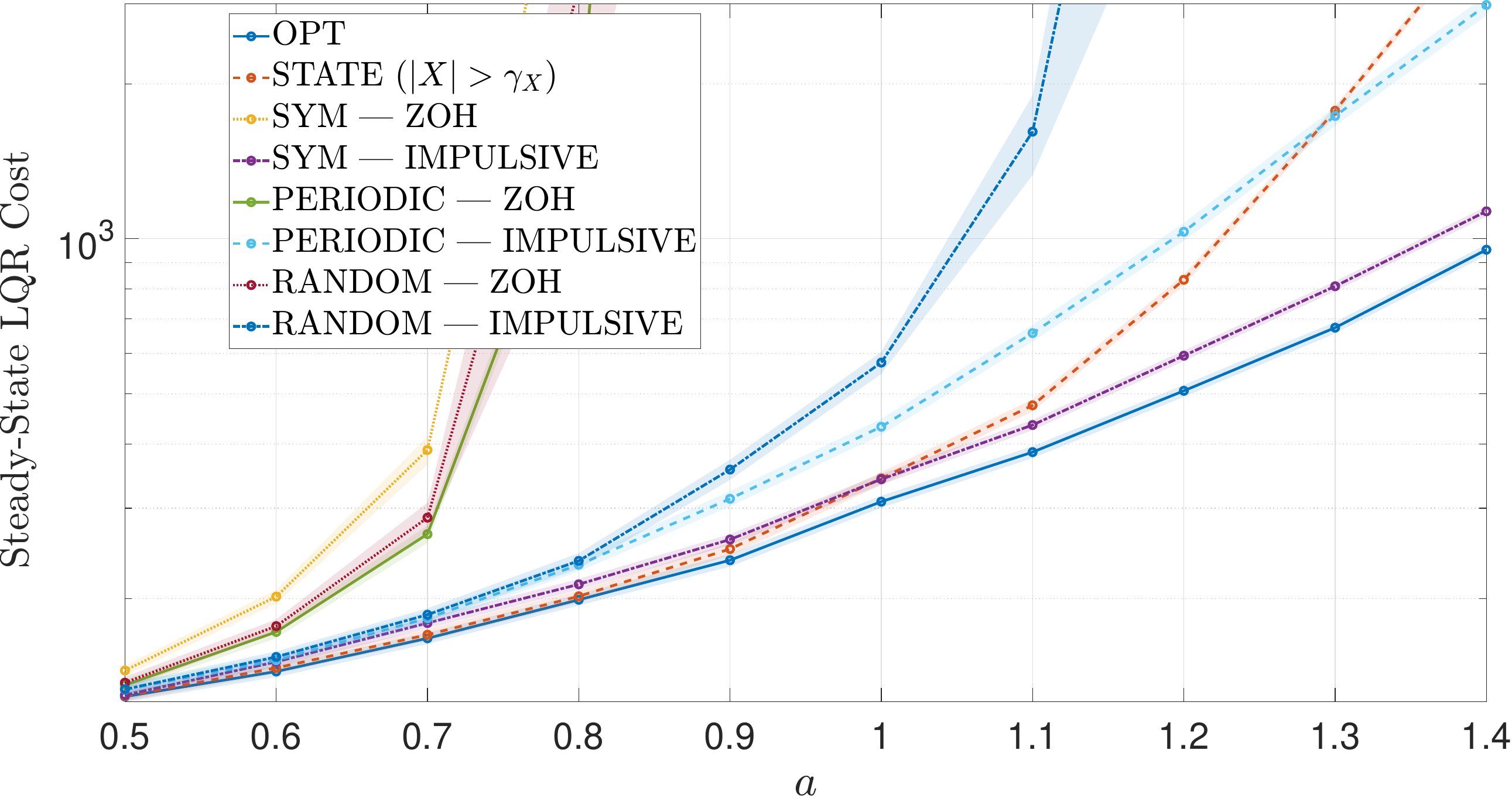}
    \caption{Steady-state LQR cost versus the open-loop gain $a$ under Gaussian disturbances. All policies satisfy $r_s = 0.25$. ZOH-based policies become unstable for $a > 0.9$, while the optimal policy consistently achieves the lowest cost. The symmetric impulsive policy closely tracks the optimal performance across all $a$.}
    \label{fig:LQR_vs_a}
\end{figure}

\begin{figure}[!t]
    \centering
    \includegraphics[width=0.8\linewidth]{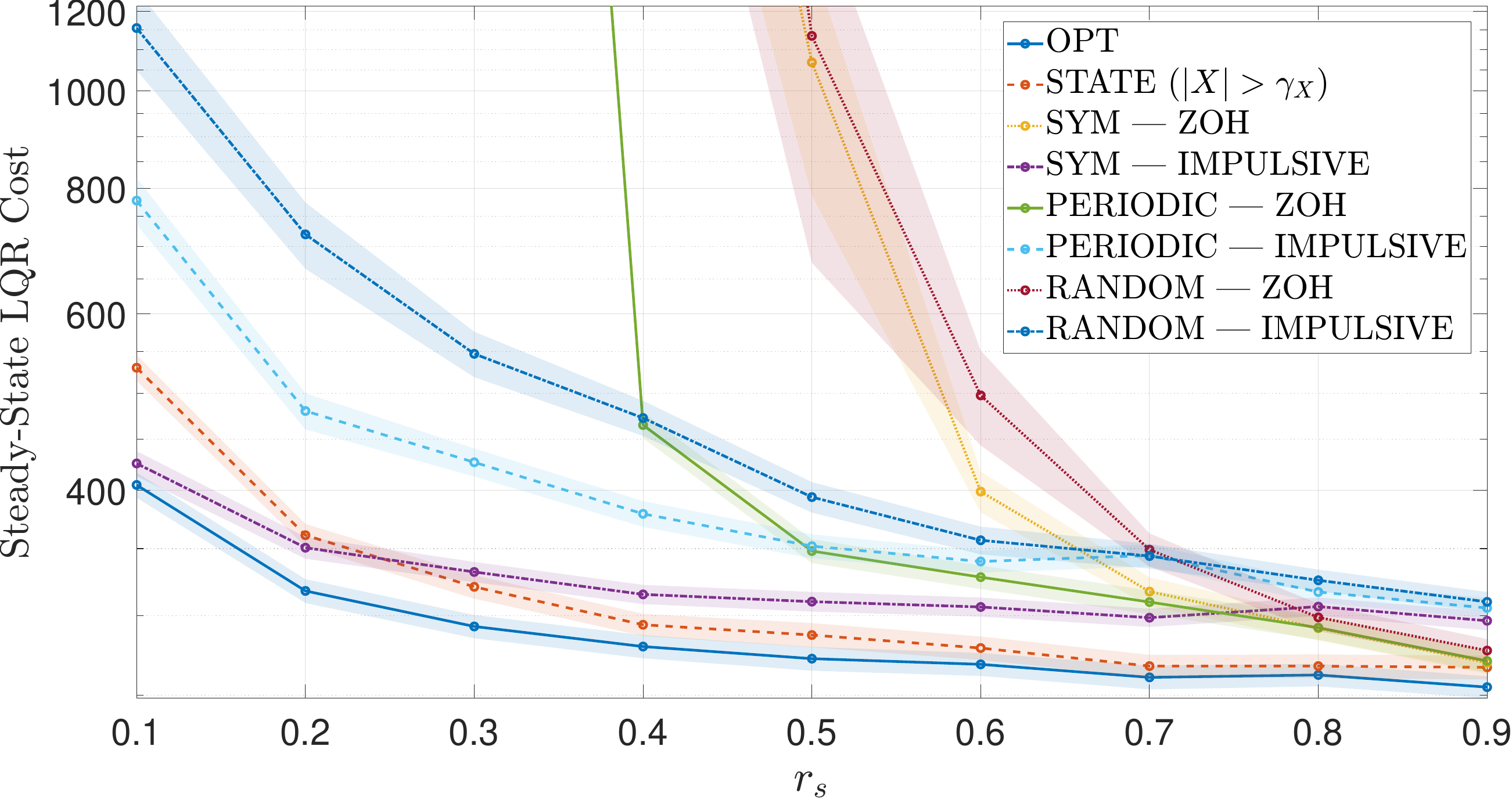}
    \caption{Steady-state LQR cost versus the target communication rate $r_s$ for $a = 1$ under Gaussian disturbances. ZOH policies become unstable for $r_s \lesssim 0.4$, while the optimal policy achieves the lowest cost across all rates. As $r_s \to 1$, all policies converge to similar performance.}
    \label{fig:LQR_vs_p_Gauss}
\end{figure}

As benchmark scheduling strategies, we consider four alternatives: 
\emph{random (Bernoulli)}, \emph{periodic}, \emph{symmetric threshold (SYM)} policies, and state-based policy.

The random (Bernoulli) scheduling policy sets $\{D_k\}$ as an i.i.d.\ Bernoulli sequence with
\[
\mathbb{P}(D_k=1)=r_s,\qquad \mathbb{P}(D_k=0)=1-r_s.
\]
The periodic policy is the ``as periodic as possible'' policy that satisfies the rate constraint for any $0 < r_s < 1$ by distributing scheduling instants as uniformly as possible over time \cite{etcibasi2026freshness}.

\begin{figure}[t]
    \centering
    \includegraphics[width=0.8\linewidth]{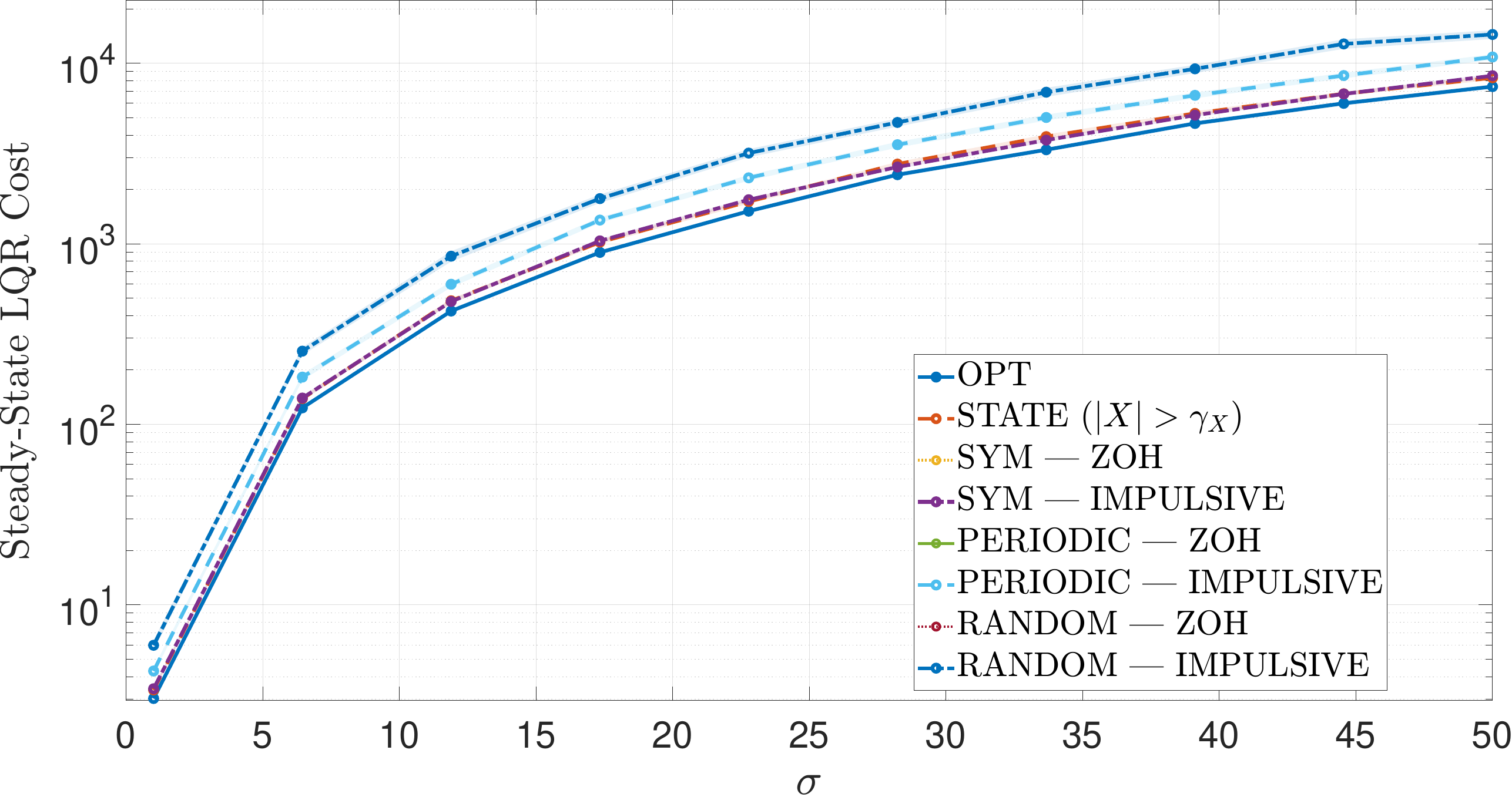}
    \caption{Steady-state LQR cost versus disturbance standard deviation $\sigma_W$ under Gaussian noise with $r_s = 0.25$. ZOH policies incur significantly higher cost and fall outside the plotted range. All visible policies exhibit increasing cost with $\sigma_W$ while maintaining similar relative performance.}
    \label{fig:LQR_vs_sigma}
\end{figure}

As the last scheduling policy, we consider the symmetric scheduling policy defined in~\eqref{eqn:opt_scheduling_pol}, where the threshold is chosen so that the resulting communication rate matches $r_s$. Although the symmetric scheduling policy is optimal when paired with the optimal controller, it becomes suboptimal when combined with ZOH or impulsive controllers, since the controller itself is no longer optimal. Note that in the limiting case $r \to 0$, the optimal controller~\eqref{eqn:optimal_controller} converges to the impulsive controller~\eqref{eqn:imp_controller}. Consequently, as $r \to 0$, the optimal policy~\eqref{eqn:opt_controller_sym},\eqref{eqn:opt_scheduling_pol} coincides with the impulsive-controller and symmetric-scheduling combination.

All of the sub-optimal scheduling policies defined above satisfy the symmetry condition in~\eqref{defn:sym_pol}. 
To broaden the range of policies under consideration, we also introduce a non-symmetric scheduling rule, referred to as the \emph{state-based policy}, defined as
\begin{equation}
    D_k = \mathds{1}_{\{|X_k| > \gamma\}}.
    \label{eqn:State_swithing_policy}
\end{equation}
Although this rule has a symmetric threshold structure in the system state $X_k$, it does not preserve symmetry at the controller level. In particular, it induces a non-zero conditional expectation of the accumulated noise:
\begin{equation*}
    \mathbb{E}\!\left[ S_m \mid I^\mathcal{C}_{t_k + m} \right] \neq 0,
\end{equation*}
since the scheduling decision now depends on the most recent state observation $X_{t_k}$, thereby breaking the symmetry.

\begin{figure}[t]
    \centering
    \includegraphics[width=0.8\linewidth]{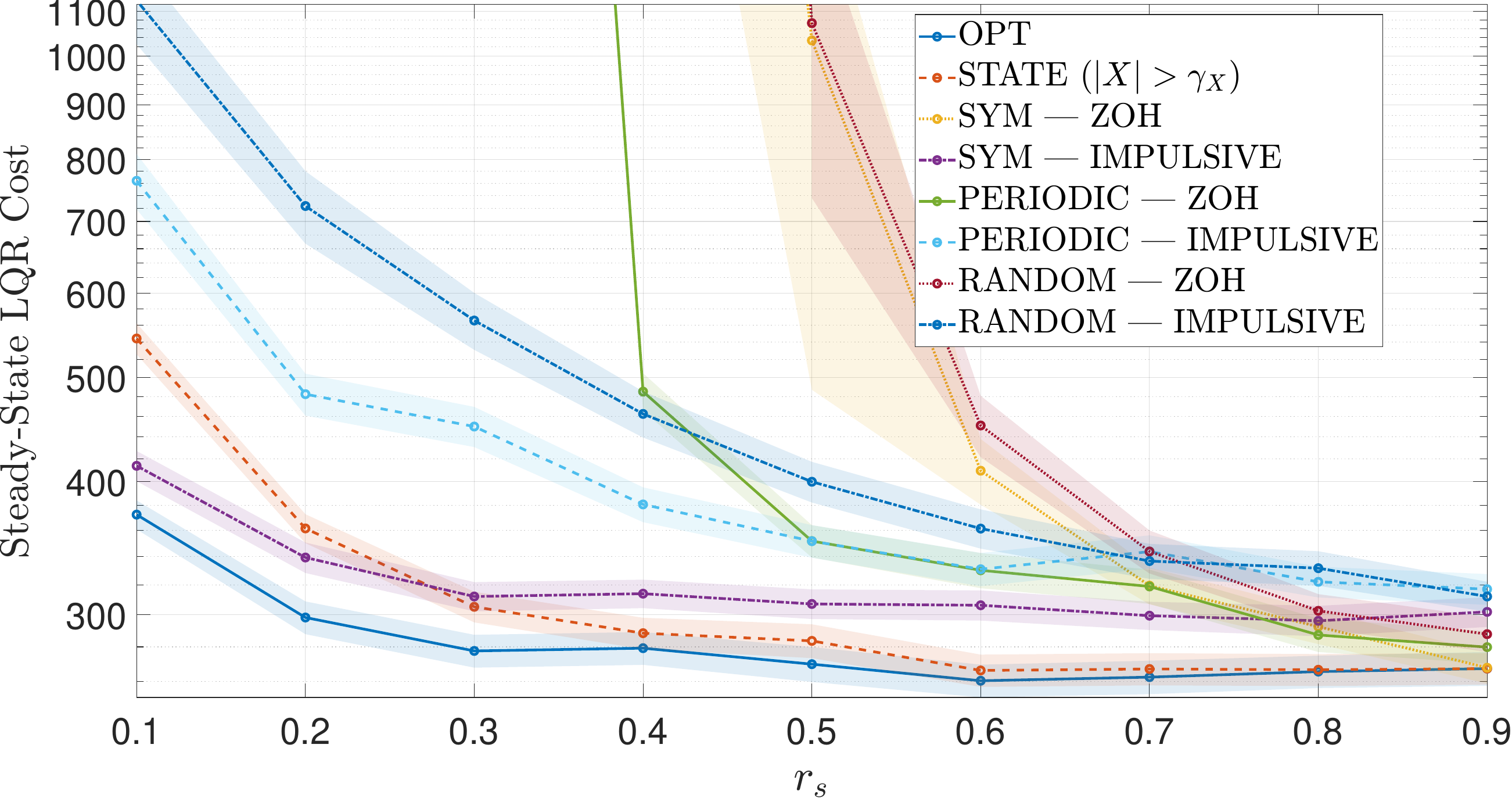}
    \caption{Steady-state LQR cost versus $r_s$ for $a = 1$ under Laplace disturbances. The behavior closely matches the Gaussian case: ZOH policies become unstable for $r_s < 0.4$, and the optimal policy achieves the lowest cost across all rates.}
    \label{fig:LQR_vs_p_Laplace}
\end{figure}

A key consequence is that the controller no longer retains the simple structure in~\eqref{eqn:opt_controller_sym}. 
Instead, the conditional state estimate $\hat{X}_k$ incorporates bias terms that depend explicitly on the noise distribution and the scheduling decisions, as reflected in the modified estimation error dynamics~\eqref{eqn:est_error_dynamics}. 
More precisely, the optimal controller takes the form
\begin{equation}
    U^{\mathrm{State}}_{t_k+m}
    =
    -L \left(
        \xi_m
        +
        \sum_{i=0}^{m-1}
        a^{m-1-i} C_{i,m}
    \right),
    \label{eqn:State_controller}
\end{equation}
where
\begin{align*}
    C_{i,m} &\triangleq \mathbb{E}[W_{t_k+i} \mid \mathcal{A}_m], \\
    \xi_1 &\triangleq a X_{t_k} - bL \hat{X}_{t_k}, \\
    \xi_m &\triangleq (a-bL)\xi_{m-1}
              - bL \sum_{i=0}^{m-2} a^{m-2-i} C_{i,m-1}, \qquad m \geq 2
\end{align*}
and
\begin{align*}
    \mathcal{A}_m &= \bigcap_{j=1}^m \left\{\left|\xi_j + \sum_{i=0}^{j-1} a^{j-1-i} W_{t_k+i}\right| \le \gamma \right\}.
\end{align*}
The derivation is provided in Appendix~\ref{appendix:G}. 
As evident from the above expressions, even a seemingly simple deviation from symmetry significantly complicates the controller structure.

Finally, we evaluate performance under three disturbance models: Gaussian, uniform, and double-exponential (Laplace), each normalized to have variance $\sigma_W^2$. 
The Laplace distribution is given by
\begin{equation}
    f_W(w)
    =
    \frac{1}{2\lambda} e^{-\frac{|w|}{\lambda}},
    \qquad
    \lambda = \frac{\sigma_W}{\sqrt{2}}.
\end{equation}

\begin{figure}[t]
    \centering
    \includegraphics[width=0.8\linewidth]{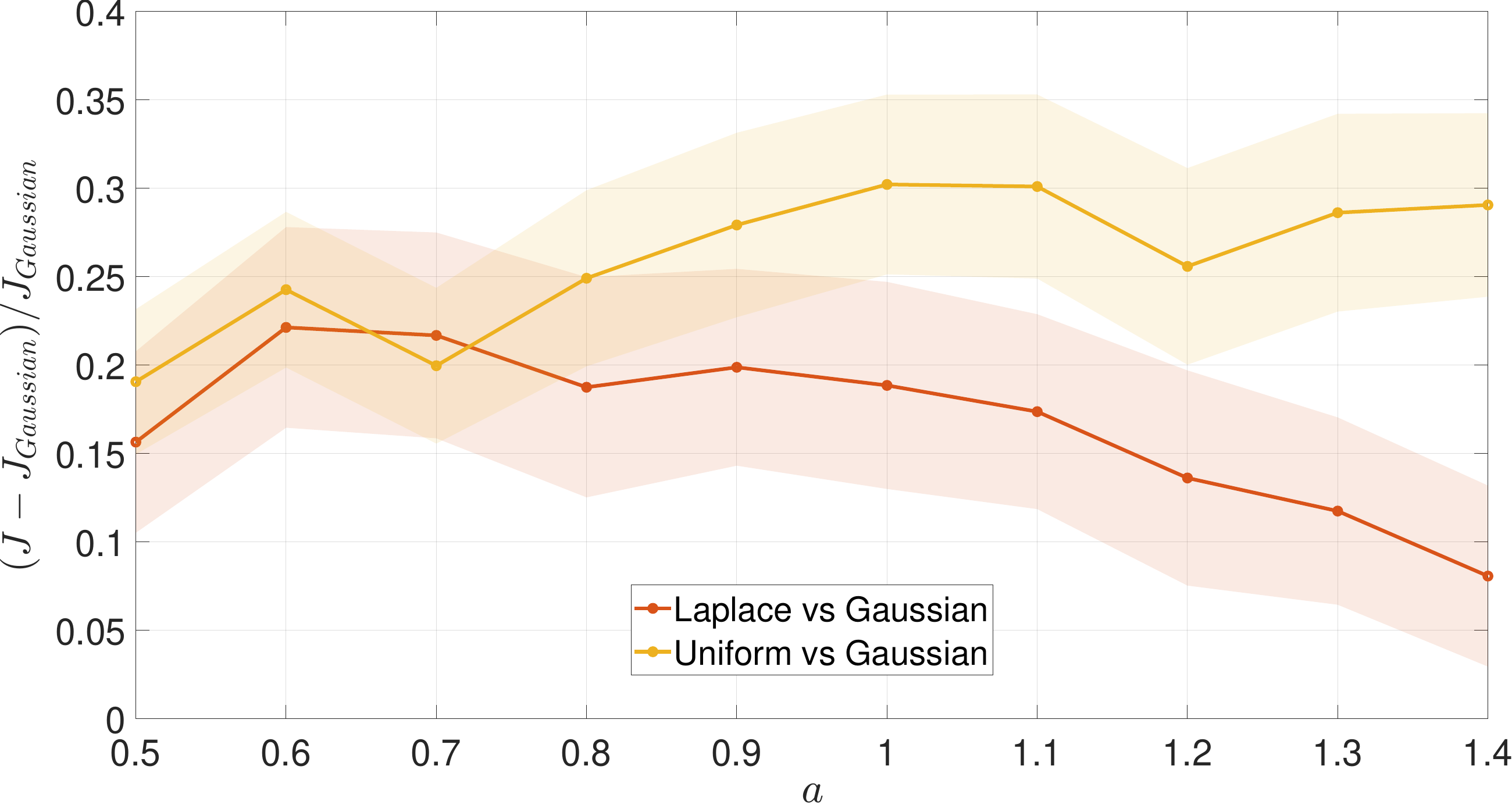}
    \caption{Normalized steady-state cost difference relative to Gaussian disturbances versus $a$ for the optimal policy. The impact of the noise distribution decreases as $a$ increases.}
    \label{fig:Diff_wrt_Gauss_vs_a_OPT}
\end{figure}

We consider a scalar LTI system with parameters $a = 1$, $b = 1$, $q = 1$, $r = 1$, disturbance variance $\sigma_W = 10$, and delay $\tau = 1$, unless stated otherwise. The communication constraint is imposed through a maximum allowable communication rate, denoted by $r_s$. Since the communication rate induced by threshold-based policies depends implicitly on the chosen threshold, we tune the thresholds of all policies so that their empirical communication rates match $r_s$ as closely as possible. This ensures a fair comparison across different policies under a common communication budget.

Our simulation study consists of three main experiments. First, we evaluate the transient performance by plotting the running-average LQR cost as a function of time for a fixed parameter setting and noise distribution. Second, we investigate the sensitivity of the system to individual parameters by varying one parameter at a time while keeping the others fixed. Finally, we examine the impact of the noise distribution by comparing the steady-state cost under different noise types against the Gaussian baseline, thereby quantifying the effect of distributional mismatch on control performance.

In Fig.~\ref{fig:Single_LQR}, we plot the running-average LQR cost as a function of time for the nominal parameter setting with $r_s = 0.4$ under Gaussian disturbances. All considered policy combinations are shown, as indicated in the legend. The empirical communication rate achieved by each policy is also reported in the legend as ``rate.'' We first observe that certain policies, namely Random-ZOH and Sym-ZOH, lead to unstable behavior, as evidenced by the divergence of their cost. Among the stable policies, periodic-ZOH yields the worst performance, indicating that ZOH implementations are generally suboptimal under this setting. As expected, the proposed optimal policy achieves the lowest cost. The state-based policy and the symmetric impulsive (SYM-IMP) policy exhibit the next best performance. This aligns with the theoretical result that the symmetric impulsive policy is optimal in the limit $r \to 0$, with the observed performance gap attributable to the finite value $r = 1$.

\begin{figure}[t]
    \centering
    \includegraphics[width=0.8\linewidth]{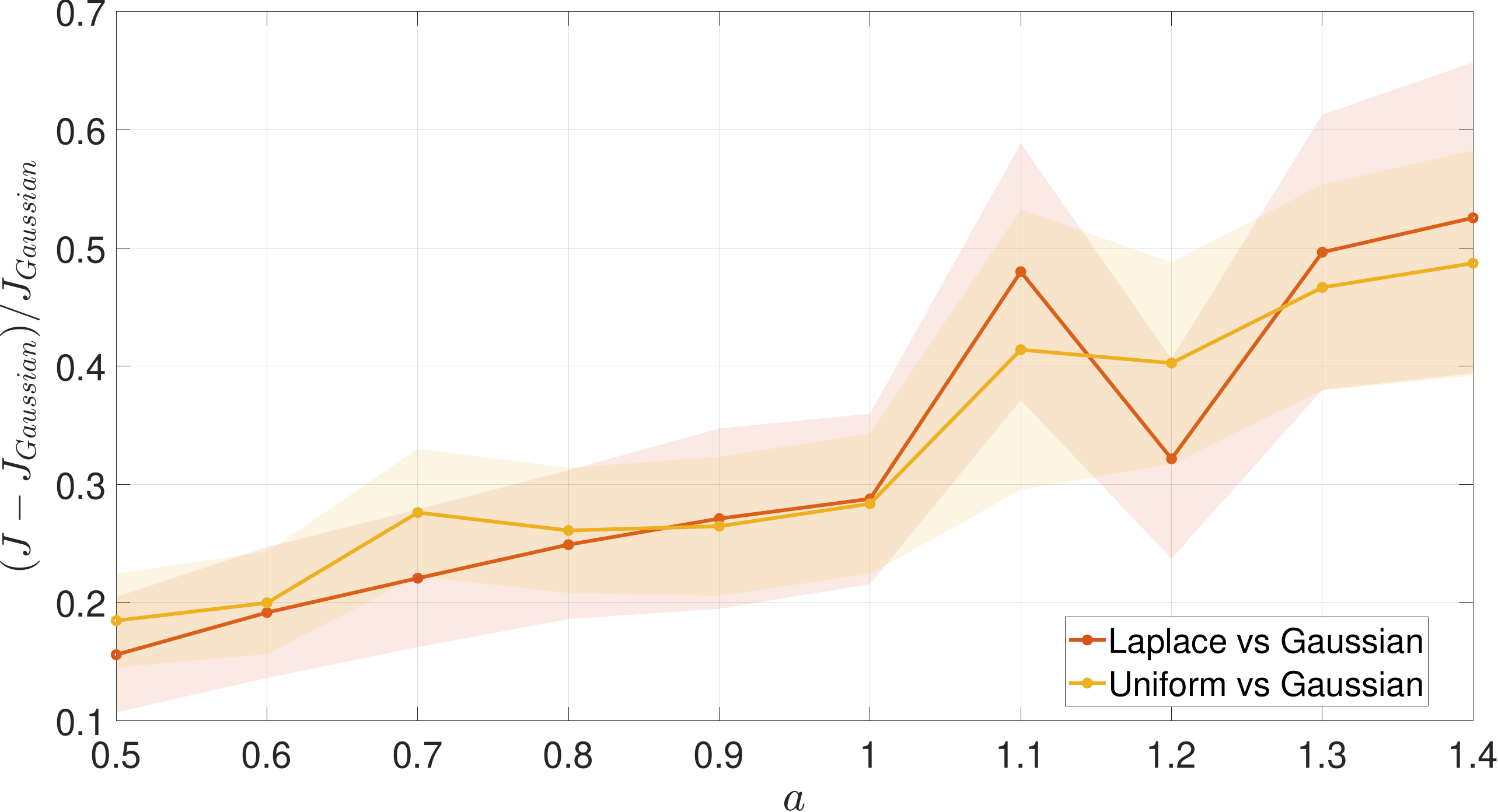}
    \caption{Normalized steady-state cost difference relative to Gaussian disturbances versus $a$ for the periodic impulsive policy. The performance is largely insensitive to the noise distribution due to the independence of scheduling decisions from system dynamics.}
    \label{fig:Diff_wrt_Gauss_vs_a_PER_IMP}
\end{figure}

We next examine how the relative performance of these policies varies with the open-loop gain $a$, the communication rate $r_s$, and the disturbance variance $\sigma_W$.
In Fig.~\ref{fig:LQR_vs_a}, we plot the steady-state LQR cost as a function of the open-loop gain $a$ under Gaussian disturbances with $r_s = 0.25$. The steady-state cost is computed as the average cost over the final 20 steps of a horizon of length 100. Shaded regions indicate $95\%$ confidence intervals. Consistent with the transient results, policies based on ZOH become unstable as the open-loop system becomes more unstable. In particular, all ZOH policies diverge for $a > 0.9$. Additionally, the random impulsive policy becomes unstable for sufficiently large $a$ (approximately $a > 1.2$).

The proposed optimal policy achieves the lowest cost across all values of $a$. Among suboptimal policies, the relative performance depends on the system dynamics: the state-based policy performs well for stable systems ($a < 1$) but degrades and eventually becomes unstable as $a$ increases. In contrast, the symmetric impulsive (SYM-IMP) policy remains stable and closely tracks the optimal policy across the entire range of $a$, consistent with its near-optimality for finite $r$ and its optimality in the limit $r \to 0$.

In Fig.~\ref{fig:LQR_vs_p_Gauss}, we plot the steady-state LQR cost as a function of the target communication rate $r_s$ for $a = 1$ under Gaussian disturbances. The optimal policy consistently achieves the lowest cost across all communication rates. As the communication constraint becomes more stringent (i.e., as $r_s$ decreases), ZOH policies rapidly degrade and become unstable. In particular, all ZOH policies diverge for $r_s < 0.4$, consistent with the observations in Fig.~\ref{fig:Single_LQR}. While the confidence intervals appear larger in this figure, this is primarily due to the scaling of the vertical axis. Finally, as $r_s \to 1$, all policies exhibit similar performance, reflecting the diminishing impact of communication constraints when transmissions are nearly always allowed.

In Fig.~\ref{fig:LQR_vs_sigma}, we plot the steady-state LQR cost as a function of the disturbance standard deviation $\sigma_W$ for $r_s = 0.25$ under Gaussian noise. For this value of $r_s$, ZOH-based policies are unstable (see Fig.~\ref{fig:LQR_vs_p_Gauss}) and yield significantly larger costs. As a result, their curves lie outside the displayed range of the vertical axis and are not visible in the plot. As expected, the steady-state cost increases monotonically with $\sigma_W$, reflecting the growing impact of disturbances. However, the growth rate diminishes for large $\sigma_W$, leading to a reduced relative gap between policies. Notably, the performance ordering remains largely unchanged across all noise levels, with the optimal and symmetric impulsive policies consistently outperforming the others. We next investigate the effect of non-Gaussian disturbance distributions.

In Fig.~\ref{fig:LQR_vs_p_Laplace}, we plot the steady-state LQR cost as a function of the target communication rate $r_s$ for $a = 1$ under Laplace disturbances. Comparing with Fig.~\ref{fig:LQR_vs_p_Gauss}, we observe qualitatively similar behavior across all policies. In particular, ZOH-based policies become unstable for $r_s < 0.4$, and the state-based policy consistently provides the second-best performance for moderate to large communication rates ($r_s > 0.3$). These observations are consistent with our theoretical results, which establish the optimality of the proposed policy for all symmetric, zero-mean disturbance distributions. While the optimal policy remains unchanged, it is of interest to examine how the relative performance of suboptimal policies varies across different noise distributions. We explore this comparison next.

In Fig.~\ref{fig:Diff_wrt_Gauss_vs_a_OPT}, we plot the normalized difference in steady-state LQR cost between non-Gaussian and Gaussian disturbances as a function of the open-loop gain $a$ for the optimal policy. The results show that the noise distribution has a noticeable impact on performance. In particular, the gap between Laplace and Gaussian costs decreases as $a$ increases, indicating that the effect of the distribution diminishes for more unstable systems. In contrast, Fig.~\ref{fig:Diff_wrt_Gauss_vs_a_PER_IMP} shows the same comparison for the periodic impulsive policy. Here, the performance under different noise distributions remains nearly identical across all values of $a$. This can be attributed to the fact that, under periodic scheduling, the transmission decisions are independent of the system state and noise realizations. As a result, the interaction between the noise distribution and the scheduling mechanism is eliminated, leading to distribution-insensitive performance.

To further illustrate this effect, we plot the normalized difference with respect to the Gaussian case as a function of $r_s$ in Figs.~\ref{fig:Diff_wrt_Gauss_vs_p_OPT} and~\ref{fig:Diff_wrt_Gauss_vs_p_PER_IMP}. In Fig.~\ref{fig:Diff_wrt_Gauss_vs_p_OPT}, we plot the normalized steady-state cost difference relative to the Gaussian case as a function of $r_s$ for the optimal policy. As in the previous results, the performance depends on the underlying noise distribution, indicating that the interaction between the scheduling decisions and the disturbance remains significant. In contrast, Fig.~\ref{fig:Diff_wrt_Gauss_vs_p_PER_IMP} shows that for the periodic impulsive policy, the normalized cost difference is nearly identical for both Laplace and uniform disturbances across all values of $r_s$. This suggests that the performance of such policies is largely insensitive to the specific noise distribution. This behavior can be explained by the fact that periodic (self-triggered) scheduling policies are independent of the noise realizations. Consequently, for any zero-mean symmetric disturbance distributions with identical variance, the induced closed-loop cost is effectively invariant to the choice of distribution.

\begin{figure}[t]
    \centering
    \includegraphics[width=0.8\linewidth]{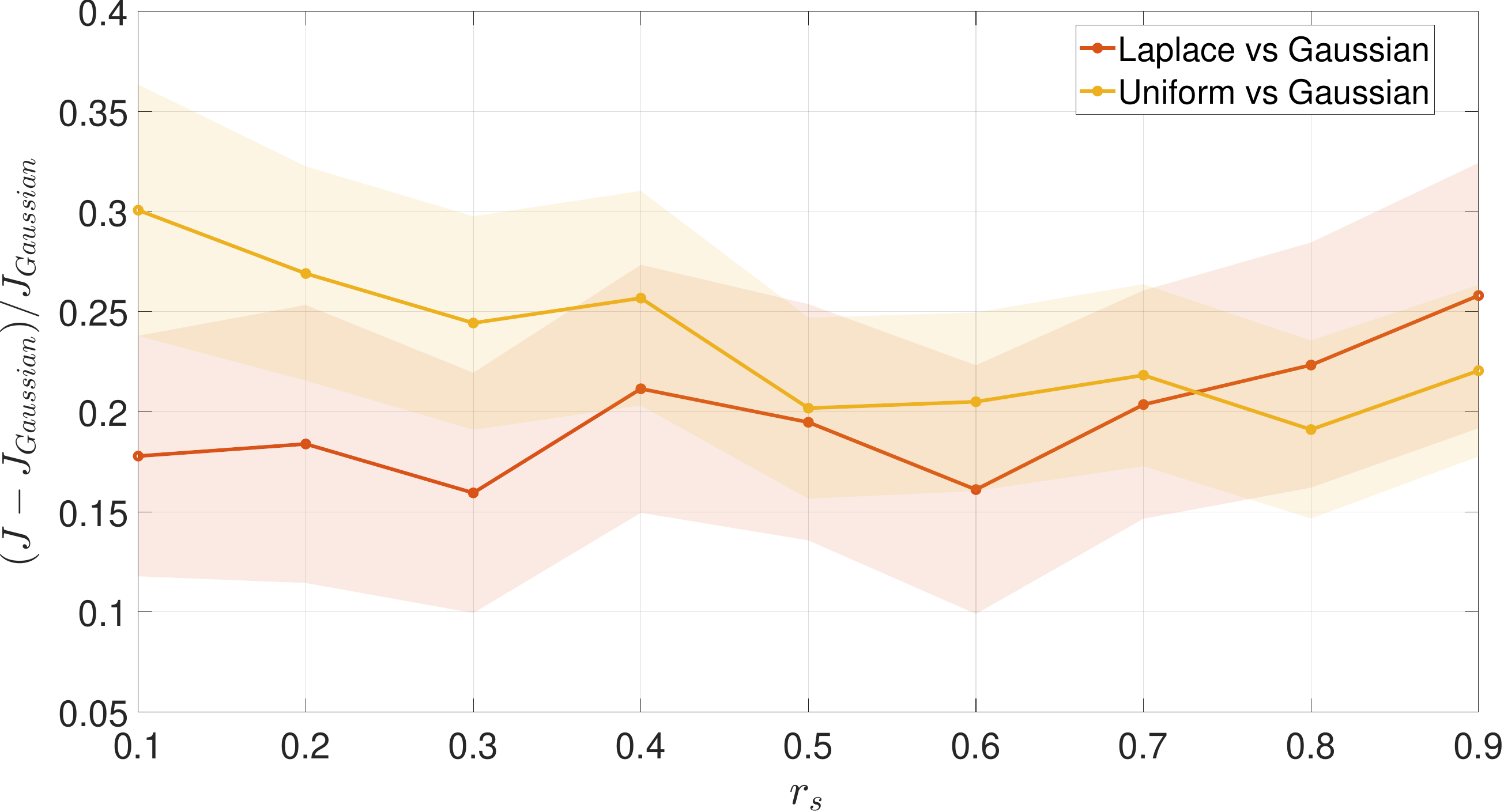}
    \caption{Normalized steady-state cost difference relative to Gaussian disturbances versus $r_s$ for the optimal policy. The performance varies with the noise distribution across all communication rates.}
    \label{fig:Diff_wrt_Gauss_vs_p_OPT}
\end{figure}

\begin{figure}[t]
    \centering
    \includegraphics[width=0.8\linewidth]{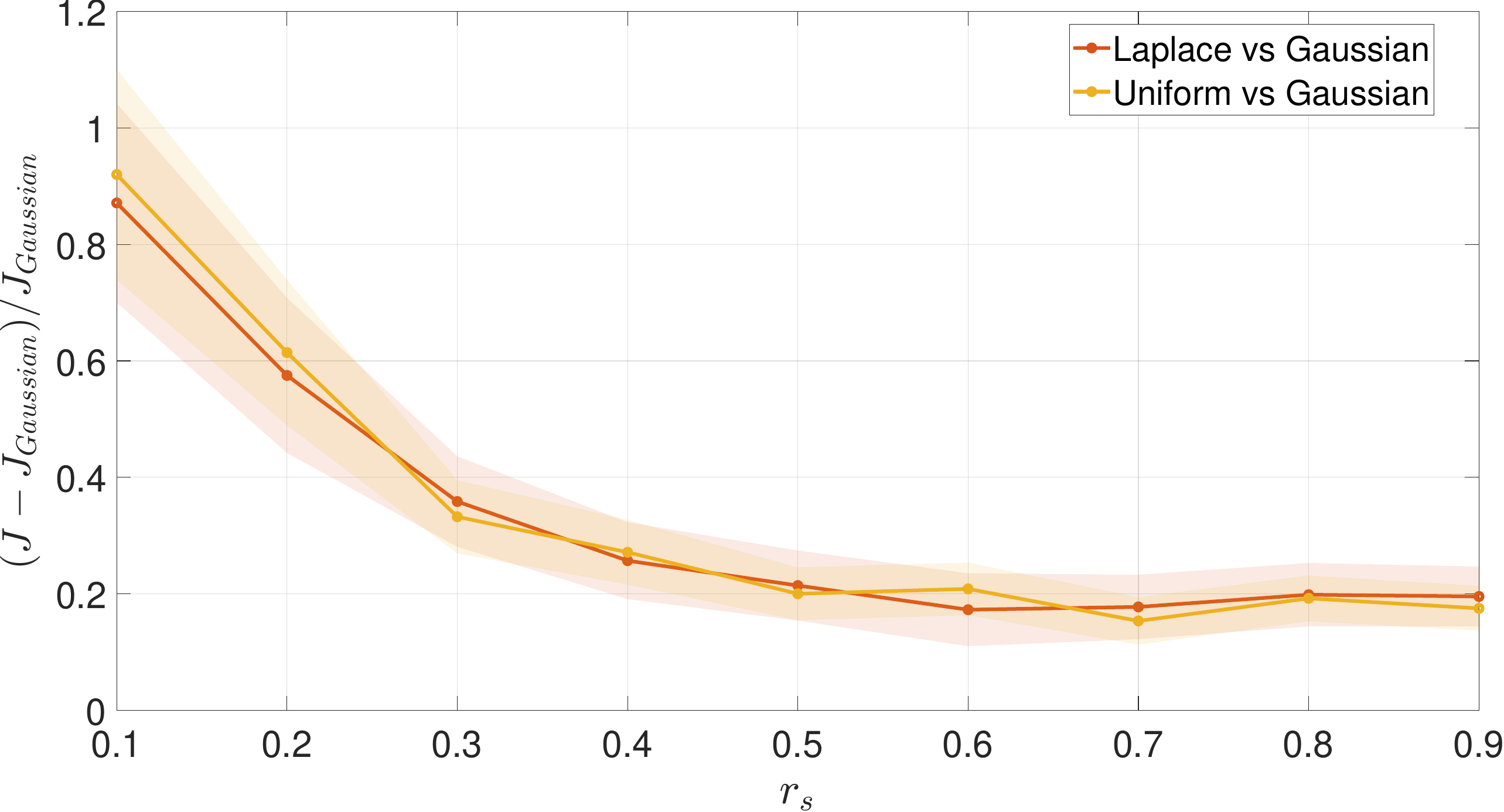}
    \caption{Normalized steady-state cost difference relative to Gaussian disturbances versus $r_s$ for the periodic impulsive policy. The performance is nearly identical across noise distributions, indicating distribution insensitivity.}
    \label{fig:Diff_wrt_Gauss_vs_p_PER_IMP}
\end{figure}

\section{Conclusion}

This paper studied the constrained joint design of scheduling and control policies for scalar networked control systems under an average communication-rate constraint. The main difficulty arises from the dual effect: scheduling decisions influence the controller’s information pattern and hence the estimation process, which in general prevents the direct application of the classical separation principle. To address this issue, we carefully analyzed the underlying information structure and identified the class of symmetric policies, under which the accumulated disturbance remains conditionally zero-mean at the controller.

Our main result established that this class is not merely a tractable restriction, but is in fact optimal for the finite-horizon communication-constrained LQR problem. As a consequence, separation holds under the optimal policy: the optimal controller retains the standard linear feedback form through the conditional state estimate, while the optimal scheduling mechanism is characterized by a symmetric threshold structure. In particular, the scheduling decision can be expressed in terms of the accumulated disturbance, or equivalently the estimation error under symmetry.

These results clarify a structural assumption that has been used, often implicitly, throughout the event-triggered and communication-constrained control literature. They also show that, under the optimal symmetric policy, the absence of a transmission does not introduce an additional bias into the controller’s estimate, thereby recovering a tractable and interpretable closed-loop structure.

\section*{Acknowledgments}
The author thanks Halit Bugra Tulay and Artun Sel for valuable discussions.

\appendix
\section{Proof of Theorem \ref{theorem:optimal control}} \label{Appendix_A}
We will use induction in this proof. The main steps follow from the Bertsekas's book \cite[Chapter 5.2]{bertsekas2012dynamic}. However,  imperfect information and delay-free feedback loop have been assumed in~\cite{bertsekas2012dynamic}, and not partial observations and fixed delay. We will show that this proof can be extended to partial observation problem and fixed delay. Note that the fixed $\tau$ delay only affects the controller's information $I^\mathcal{C}_{k}$. Hence, we need to be careful when using $I^\mathcal{C}_{k}$.

Let the cost-to-go function be
\begin{equation}
    J_k^\star(I^\mathcal{C}_{k}) = \min_{u_k} \mathbb{E} \left[ q X_k^2 + r U_k^2 + J_{k+1}(I^\mathcal{C}_{k+1}) \,\middle|\, I^\mathcal{C}_{k},\, U_k = u_k \right]
\end{equation}

Let's define a terminal cost \(q\) for \(X_N\), hence
\begin{equation}
    J_{N-1}^\star(I^\mathcal{C}_{N-1}) = \min_{u_{N-1}} \mathbb{E} \left[ q X_{N-1}^2 + r U_{N-1}^2 + qX_N^2 \,\middle|\, I^\mathcal{C}_{N-1},\, U_{N-1} = u_{N-1} \right] \label{eqn:J_N-1}
\end{equation}

Note that, no matter what \(D_{N-1}\) is, the system state evolution is \(X_{N} = aX_{N-1} + bU_{N-1} + W_{N-1}\), since \(X_{N-1}\) is independent of $D_{N-1}$ and $W_{N-1}$.

Hence, by using the system state evolution \eqref{eqn:system_Model}, \eqref{eqn:J_N-1} can be written as
\begin{align*}
    J_{N-1}^\star(I^\mathcal{C}_{N-1}) & = \min_{u_{N-1}} \Big\{
    q(1 + a^2) \, \mathbb{E}[X_{N-1}^2 \mid I^\mathcal{C}_{N-1}]
    + (r + b^2 q) u_{N-1}^2 \\
    & \qquad + 2abq \, \mathbb{E}[X_{N-1} \mid I^\mathcal{C}_{N-1}] \, u_{N-1} + q \, \mathbb{E}[W_{N-1}^2 \mid I^\mathcal{C}_{N-1}] \\
    & \qquad + 2bq \, \mathbb{E}[W_{N-1} \mid I^\mathcal{C}_{N-1}] \, u_{N-1} \Big\} \\
    & \overset{(a)}{=} \min_{u_{N-1}} \Big\{(r + b^2 q) u_{N-1}^2 + 2abq \, \mathbb{E}[X_{N-1} \mid I^\mathcal{C}_{N-1}] \, u_{N-1}\Big\} \\
    & \qquad + q(1 + a^2) \, \mathbb{E}[X_{N-1}^2 \mid I^\mathcal{C}_{N-1}] + q \sigma_W^2 \\
    & \overset{(b)}{=} q(1 + a^2) \, \mathbb{E}[X_{N-1}^2 \mid I^\mathcal{C}_{N-1}] - \frac{(abq)^2}{r + b^2 q} \left( \mathbb{E}[X_{N-1} \mid I^\mathcal{C}_{N-1}] \right)^2 + q \sigma_W^2 \\[1ex]
    & \overset{(c)}{=} \mathbb{E} \left[ q(1 + a^2) X_{N-1}^2 - \frac{(abq)^2}{r + b^2 q} \left( \mathbb{E}[X_{N-1} \mid I^\mathcal{C}_{N-1}] \right)^2 + q \sigma_W^2 \,\middle|\, I^\mathcal{C}_{N-1}
    \right] \numberthis \label{eqn:J_N-1_2}
\end{align*}
where in the Step (a), we used the fact that $W_{N-1}$ is independent of $I^\mathcal{C}_{N-1}$ for any $\tau$. In Step (b), we solved and used the optimal controller as follows
\begin{align*}
    u_{N-1}^\star & = \arg\min_{u_{N-1}} \left\{
    (r + b^2 q) u_{N-1}^2
    + 2abq \, \mathbb{E}[X_{N-1} \mid I^\mathcal{C}_{N-1}] \, u_{N-1} \right\}\\
    & = -\frac{abq}{r + b^2 q} \, \mathbb{E}[X_{N-1} \mid I^\mathcal{C}_{N-1}]
\end{align*}
And in Step (c), we expanded the conditional expectation $\mathbb{E}[X_{N-1}^2 \mid I^\mathcal{C}_{N-1}]$. Note that $u_{N-1}^\star$ includes all possible scheduling policies up to $N-1$, i.e. \(\boldsymbol{\pi}=(f_{0},f_{1},\dots, f_{N-1})\in\Pi\).

Now, note that
\begin{align*}
    \mathbb{E} \left[ \left( \mathbb{E}[X_{N-1} \mid I^\mathcal{C}_{N-1}] \right)^2 \,\middle|\, I^\mathcal{C}_{N-1} \right]
    & = \mathbb{E} \bigg[ 2 X_{N-1} \, \mathbb{E}[X_{N-1} \mid I^\mathcal{C}_{N-1}] \\ 
    & \qquad - \Big( \mathbb{E}[X_{N-1} \mid I^\mathcal{C}_{N-1}] \Big)^2 \,\bigg| \, I^\mathcal{C}_{N-1} \bigg]
\end{align*}
Hence, \eqref{eqn:J_N-1_2} can be rewritten as
\begin{align*}
    J_{N-1}^\star(I^\mathcal{C}_{N-1}) & = \mathbb{E} \Bigg[q(1 + a^2) X_{N-1}^2 - \frac{(abq)^2}{r + b^2 q}
    \Big( 2 X_{N-1} \, \mathbb{E}[X_{N-1} \mid I^\mathcal{C}_{N-1}] \\ 
    & \qquad - \left( \mathbb{E}[X_{N-1} \mid I^\mathcal{C}_{N-1}] \right)^2 \Big) + q \sigma_W^2
    \,\Bigg|\, \{I^\mathcal{C}\}_0^N \Bigg] \\[1ex] 
    & = \mathbb{E} \Bigg[\bigg( q(1 + a^2) - \frac{(abq)^2}{r + b^2 q} \bigg) X_{N-1}^2 + \frac{(abq)^2}{r + b^2 q}
    \bigg( X_{N-1}^2 - 2 X_{N-1} \, \mathbb{E}[X_{N-1} \mid I^\mathcal{C}_{N-1}] \\ 
    & \qquad + \left( \mathbb{E}[X_{N-1} \mid I^\mathcal{C}_{N-1}] \right)^2 \bigg) + q \sigma_W^2
    \,\Bigg|\, I^\mathcal{C}_{N-1} \Bigg] \\
    & = P_{N-1} \mathbb{E}\left[ X_{N-1}^2 \mid I^\mathcal{C}_{N-1} \right] \\
    & \qquad + \frac{(abP_N)^2}{r + b^2 P_N} \, \mathbb{E}\left[\left( X_{N-1} - \mathbb{E}[X_{N-1} \mid I^\mathcal{C}_{N-1}] \right)^2
    \,\middle|\, I^\mathcal{C}_{N-1}\right] + q \sigma_W^2
\end{align*}
where
\[
 P_{N-1} = \left(q + a^2P_N - \frac{(abP_N)^2}{r + b^2 P_N}\right), \qquad P_N = q.
\]
Also note that
\begin{align*}
    \mathbb{E} \bigg[ \Big( X_{N-1} & - \mathbb{E}[X_{N-1} \mid I^\mathcal{C}_{N-1}] \Big)^2 \,\bigg|\, I^\mathcal{C}_{N-1} \bigg] \\
    & = \mathbb{E} \left[\left(a X_{N-2} + b U_{N-2} + W_{N-2} - \mathbb{E}\left[ a X_{N-2} + b U_{N-2} + W_{N-2} \mid I^\mathcal{C}_{N-1} \right]\right)^2 \middle|\, I^\mathcal{C}_{N-1} \right] \\
    & \overset{(a)}{=} \mathbb{E} \Bigg[ \bigg(a \Big( X_{N-2} - a \mathbb{E}[X_{N-2} \mid I^\mathcal{C}_{N-1}] \Big) + \Big( W_{N-2} - \mathbb{E}[W_{N-2} \mid I^\mathcal{C}_{N-1}] \Big)\bigg)^2 \Bigg|\, I^\mathcal{C}_{N-1} \Bigg]
\end{align*}
where Step (a) holds because $U_{N-2} \in I^\mathcal{C}_{N-1}$, independent from $\tau$.

Now, let us use induction. Assume
\begin{align*}
    J_{k+1}^\star(I^\mathcal{C}_{k+1}) & = P_{k+1} \, \mathbb{E}[X_{k+1}^2 \mid I^\mathcal{C}_{k+1}] \\
    & \qquad + \sum_{j=k+1}^{N-1} \left(\frac{(ab\, P_{j+1})^2}{r + b^2 P_{j+1}} \mathbb{E}\left[\left( X_{j} - \mathbb{E}[X_{j} \mid I^\mathcal{C}_{j}] \right)^2\,\middle|\, I^\mathcal{C}_{k+1}\right] + P_{j+1} \sigma_W^2 \right) \numberthis \label{eqn:J_k+1}
\end{align*}
Hence,
\begin{align*}
    J_k^\star(I^\mathcal{C}_{k}) & = \min_{u_k} \mathbb{E} \left[q X_k^2 + r U_k^2 + J_{k+1}(I^\mathcal{C}_{k+1}) \,\middle|\, I^\mathcal{C}_{k},\, U_k = u_k \right] \\
    & = \min_{u_k} \Bigg\{ r\,u_k^2 + P_{k+1}\,\mathbb{E}\!\left[ \mathbb{E}\!\left[ X_{k+1}^2 \mid I^\mathcal{C}_{k+1} \right] \,\middle|\, I^\mathcal{C}_{k},\, U_k = u_k \right]  \Bigg\} \\
    & \qquad + q\,\mathbb{E}\!\left[ X_k^2 \mid I^\mathcal{C}_{k},\, U_k = u_k \right] + \sum_{j=k+1}^{N-1} \Big( \frac{(ab\,P_{j+1})^2}{r + b^2 P_{j+1}} \\
    & \qquad \mathbb{E}\!\left[ \mathbb{E}\!\left[ \left( X_j - \mathbb{E}[X_j \mid I^\mathcal{C}_{j}] \right)^2 \,\middle|\, I^\mathcal{C}_{k+1} \right] \,\middle|\, I^\mathcal{C}_{k},\, U_k = u_k \right] + P_{j+1}\sigma_W^2 \Big) \numberthis \label{eqn:J_k}
\end{align*}
First, since the system is linear and the information vector $I^\mathcal{C}_{k}$ includes the control signals, the estimation error terms $\left( X_j - \mathbb{E}[X_j \mid I^\mathcal{C}_{j}] \right)$ are independent of $u_k$ for any $\tau$. Furthermore, since $\sigma\left(I^\mathcal{C}_{k}\right) \subset \sigma\left(I^\mathcal{C}_{k+1}\right)$ where $\sigma\left(I^\mathcal{C}_{k}\right)$ is the sigma-field generated by $I^\mathcal{C}_{k}$ and by the tower property,
\begin{align*}
    \mathbb{E} \Bigg[ \mathbb{E} \left[ \left( X_j - \mathbb{E}[X_j \mid I^\mathcal{C}_j] \right)^2 \,\middle|\, I^\mathcal{C}_{k+1} \right] \,\Bigg|\,&  I^\mathcal{C}_{k},\, U_k = u_k \Bigg] \\
    & = \mathbb{E} \left[ \left( X_j - \mathbb{E}[X_j \mid I^\mathcal{C}_j] \right)^2 \,\middle|\, I^\mathcal{C}_{k} \right] \numberthis \label{eqn:J_k_middle1}
\end{align*}
Secondly, since for any $\tau$ the system state evolution is the same for both $D_K=0$ and $D_K=1$, \eqref{eqn:system_Model}, and again by the tower property, we have
\begin{align*}
    \mathbb{E} \bigg[ \mathbb{E}\Big[X_{k+1}^2 \Big| I^\mathcal{C}_{k+1}\Big] \,\bigg|\, I^\mathcal{C}_{k},\,&  U_k = u_k \bigg] = \mathbb{E} \left[ (a X_k + b u_k + W_k)^2 \,\middle|\, I^\mathcal{C}_{k} \right] \\
    & = a^2 \mathbb{E}[X_k^2 \mid I^\mathcal{C}_{k}] + b^2 u_k^2 + \mathbb{E}[W_k^2 \mid I^\mathcal{C}_{k}] + 2ab \mathbb{E}[X_k \mid I^\mathcal{C}_{k}] u_k \\
    & \qquad + 2a \mathbb{E}[X_k W_k \mid I^\mathcal{C}_{k}] + 2b \mathbb{E}[W_k \mid I^\mathcal{C}_{k}] u_k \\
    & \overset{(a)}{=} a^2 \mathbb{E}[X_k^2 \mid I^\mathcal{C}_{k}] + b^2 u_k^2 + \sigma_W^2 + 2ab \mathbb{E}[X_k \mid I^\mathcal{C}_{k}] u_k \numberthis \label{eqn:J_k_middle2}
\end{align*}
where in Step (a) we used the fact that $I^\mathcal{C}_{k}$ and $X_k$ are independent of $W_k$ for any $\tau$.
By substituting \eqref{eqn:J_k_middle1} and \eqref{eqn:J_k_middle2} into \eqref{eqn:J_k}, we obtain
\begin{align*}
    J_k^\star & (I^\mathcal{C}_{k}) = \min_{u_k} \Bigg\{ r u_k^2 + P_{k+1} \left( a^2 \mathbb{E}[X_k^2 \mid I^\mathcal{C}_{k}] + b^2 u_k^2 + \sigma_W^2 + 2ab \mathbb{E}[X_k \mid I^\mathcal{C}_{k}] u_k \right) \Bigg\} \\
    & \quad + q \mathbb{E}[X_k^2 \mid I^\mathcal{C}_{k}] + \sum_{j = k+1}^{N-1} \left( \frac{(ab \, P_{j+1})^2}{r + b^2 P_{j+1}}  \mathbb{E} \left[ \left( X_j - \mathbb{E}[X_j \mid I^\mathcal{C}_j] \right)^2 \,\middle|\, I^\mathcal{C}_{k} \right] + P_{j+1} \sigma_W^2 \right)
\end{align*}
So, the optimal controller is given as
\begin{equation}
    u_k^\star = -\frac{ab \, P_{k+1}}{r + b^2 P_{k+1}} \, \mathbb{E}[X_k \mid I^\mathcal{C}_{k}] \label{eqn:u_k}
\end{equation}
We then find the optimal cost-to-go function as
\begin{align*}
    J_k^\star(I^\mathcal{C}_{k}) & = \frac{(ab \, P_{k+1})^2}{(r + b^2 P_{k+1})^2} \left( \mathbb{E}[X_k \mid I^\mathcal{C}_{k}] \right)^2 - 2 \frac{(ab \, P_{k+1})^2}{(r + b^2 P_{k+1})} \left( \mathbb{E}[X_k \mid I^\mathcal{C}_{k}] \right)^2 + P_{k+1} \sigma_W^2 \\
    & \qquad + P_{k+1} a^2 \mathbb{E}[X_k^2 \mid I^\mathcal{C}_{k}] + q \mathbb{E}[X_k^2 \mid I^\mathcal{C}_{k}] \\
    & \qquad + \sum_{j = k+1}^{N-1} \left( \frac{(ab \, P_{j+1})^2}{r + b^2 P_{j+1}} \mathbb{E} \left[ \left( X_j - \mathbb{E}[X_j \mid I^\mathcal{C}_j] \right)^2 \,\middle|\, I^\mathcal{C}_{k} \right] + P_{j+1} \sigma_W^2 \right) \\
    & = \left(q + a^2 P_{k+1} \right) \mathbb{E}[X_k^2 \mid I^\mathcal{C}_{k}] - \frac{(ab \, P_{k+1})^2}{r + b^2 P_{k+1}} \left( \mathbb{E}[X_k \mid I^\mathcal{C}_{k}] \right)^2 + P_{k+1} \sigma_W^2 \\
    & \qquad + \sum_{j = k+1}^{N-1} \left( \frac{(ab \, P_{j+1})^2}{r + b^2 P_{j+1}} \mathbb{E} \left[ \left( X_j - \mathbb{E}[X_j \mid I^\mathcal{C}_j] \right)^2 \,\middle|\, I^\mathcal{C}_{k} \right] + P_{j+1} \sigma_W^2 \right) \\
    & \overset{(a)}{=} \mathbb{E} \left[ \left(q + a^2 P_{k+1} \right) X_k^2 - \frac{(ab \, P_{k+1})^2}{r + b^2 P_{k+1}} \left( 2 X_k \mathbb{E}[X_k \mid I^\mathcal{C}_{k}] - \left( \mathbb{E}[X_k \mid I^\mathcal{C}_{k}] \right)^2 \right) \,\middle|\, I^\mathcal{C}_{k} \right]  \\
    & \quad + P_{k+1} \sigma_W^2 + \sum_{j = k+1}^{N-1} \left( \frac{(ab \, P_{j+1})^2}{r + b^2 P_{j+1}} \mathbb{E} \left[ \left( X_j - \mathbb{E}[X_j \mid I^\mathcal{C}_j] \right)^2 \,\middle|\, I^\mathcal{C}_{k} \right] + P_{j+1} \sigma_W^2 \right) \\
    & = \mathbb{E} \bigg[ \left( q + a^2 P_{k+1} - \frac{(ab \, P_{k+1})^2}{r + b^2 P_{k+1}} \right) X_k^2 \\
    & \qquad + \frac{(ab \, P_{k+1})^2}{r + b^2 P_{k+1}} \left( X_k^2 - 2 X_k \mathbb{E}[X_k \mid I^\mathcal{C}_{k}] + \left( \mathbb{E}[X_k \mid I^\mathcal{C}_{k}] \right)^2 \right) \,\bigg|\, I^\mathcal{C}_{k} \bigg]  \\
    & \quad + P_{k+1} \sigma_W^2 + \sum_{j = k+1}^{N-1} \left( \frac{(ab \, P_{j+1})^2}{r + b^2 P_{j+1}} \mathbb{E} \left[ \left( X_j - \mathbb{E}[X_j \mid I^\mathcal{C}_j] \right)^2 \,\middle|\, I^\mathcal{C}_{k} \right] + P_{j+1} \sigma_W^2 \right) \\
    & = \left( q + a^2 P_{k+1} - \frac{(ab \, P_{k+1})^2}{r + b^2 P_{k+1}} \right) \mathbb{E}[X_k^2 \mid I^\mathcal{C}_{k}] \\
    & \qquad + \frac{(ab \, P_{k+1})^2}{r + b^2 P_{k+1}} \, \mathbb{E} \left[ \left( X_k - \mathbb{E}[X_k \mid I^\mathcal{C}_{k}] \right)^2 \,\middle|\, I^\mathcal{C}_{k} \right]+ P_{k+1} \sigma_W^2 \\
    & \qquad + \sum_{j = k+1}^{N-1} \left( \frac{(ab \, P_{j+1})^2}{r + b^2 P_{j+1}} \mathbb{E} \left[ \left( X_j - \mathbb{E}[X_j \mid I^\mathcal{C}_j] \right)^2 \,\middle|\, I^\mathcal{C}_{k} \right] + P_{j+1} \sigma_W^2 \right) \\
    & = P_k \, \mathbb{E}[X_k^2 \mid I^\mathcal{C}_{k}] + \sum_{j = k}^{N-1} \left( \frac{(ab \, P_{j+1})^2}{r + b^2 P_{j+1}} \mathbb{E} \left[ \left( X_j - \mathbb{E}[X_j \mid I^\mathcal{C}_j] \right)^2 \,\middle|\, I^\mathcal{C}_{k} \right] + P_{j+1} \sigma_W^2 \right)
\end{align*}
where, in Step (a), we used the fact that
\begin{equation*}
    \mathbb{E} \left[ \left( \mathbb{E}[X_k \mid I^\mathcal{C}_{k}] \right)^2 \,\middle|\, I^\mathcal{C}_{k} \right] = \mathbb{E} \left[ 2 X_k \mathbb{E}[X_k \mid I^\mathcal{C}_{k}] - \left( \mathbb{E}[X_k \mid I^\mathcal{C}_{k}] \right)^2 \,\middle|\, I^\mathcal{C}_{k} \right].
\end{equation*}
Hence, the induction step is proved.

Note that the controller and cost-to-go functions are both dependent on the scheduling policy. Since we have not found the optimal scheduling policy, the controller is not fully determined, either. The analysis shows that the optimal controller is a linear function of the estimator. Hence, the effect of the delay can only be seen through $\mathbb{E}[X_k \mid I^\mathcal{C}_{k}]$ in $u_k^\star$. Secondly, in the steady-state, we know that
\[
\lim_{k \to \infty} P_k = P
\]
where \( P \) is the solution of
\[
P = q + a^2 P - \frac{(abP)^2}{r + b^2 P}.
\]
when $q, r > 0$ and for a scalar system \cite[Proposition 4.1]{bertsekas2012dynamic}.

Hence, for any $\tau$, the optimal controller is \[ u_k^\star = -L \; \mathbb{E}[X_k \mid I^\mathcal{C}_{k}]\] where $L = \frac{abP}{r + b^2 P}$.

\section{Proof of Theorem \ref{theorem:equivalent problem}} \label{Appendix_B}
The proof follows mostly from \cite[Chapter 8, Lemma 6.1]{aastrom2012introduction}. However, noisy observations have been considered in~\cite{aastrom2012introduction} without a scheduling mechanism and delay. We will show that the proof is valid for both intermittent observations and delay.

First of all, let's rewrite the objective as
\begin{equation}
    \frac{1}{N} \, \mathbb{E} \left[ q X_N^2 + \sum_{k=0}^{N-1} \left( q X_k^2 + r U_k^2 \right) \right] \label{eqn:Cost}
\end{equation}
and let
\begin{align*}
    P_k & = q + a^2 P_{k+1} - \frac{(ab \, P_{k+1})^2}{r + b^2 P_{k+1}} \\
    P_N & = q \\
    L_k & = \frac{ab P_{k+1}}{r + b^2 P_{k+1}}
\end{align*}
Now, let us define a telescoping operation
\[
\sum_{k=0}^{N-1} \left( P_{k+1} X_{k+1}^2 - P_k X_k^2 \right) = P_N X_N^2 - P_0 X_0^2.
\]
So,
\begin{align*}
    q X_N^2 & = P_0 X_0^2 + \sum_{k=0}^{N-1} \left( P_{k+1} X_{k+1}^2 - P_k X_k^2 \right) \\
    & = P_0 X_0^2 + \sum_{k=0}^{N-1} \left( P_{k+1} (a X_k + b U_k)^2 + 2 P_{k+1} (a X_k + b U_k) W_k + P_{k+1} W_k^2 - P_k X_k^2 \right) \numberthis \label{eqn:X_N_1}
\end{align*}
Note that
\begin{align*}
    \sum_{k=0}^{N-1} & \Big( P_{k+1}(aX_k + bU_k)^2 - P_k X_k^2 \Big) \\
    & \overset{(a)}{=} \sum_{k=0}^{N-1} \left( P_{k+1} a^2 X_k^2 + 2 P_{k+1} a b X_k U_k + P_{k+1} b^2 U_k^2 - \left( q + a^2 P_{k+1} - L_k^2 (r + b^2 P_{k+1}) \right) X_k^2 \right) \\
    & = \sum_{k=0}^{N-1} \left( (r + b^2 P_{k+1}) U_k^2 + (r + b^2 P_{k+1}) L_k^2 X_k^2 + 2 P_{k+1} a b X_k U_k \right)
    - \sum_{k=0}^{N-1} \left( q X_k^2 + r U_k^2 \right) \\
    & \overset{(b)}{=} \sum_{k=0}^{N-1} \left( (r + b^2 P_{k+1}) U_k^2 + (r + b^2 P_{k+1}) L_k^2 X_k^2 + 2 (r + b^2 P_{k+1}) L_k X_k U_k \right)
    - \sum_{k=0}^{N-1} \left( q X_k^2 + r U_k^2 \right) \\
    & = \sum_{k=0}^{N-1} \left( r + b^2 P_{k+1} \right) \left( L_k X_k + U_k \right)^2
- \sum_{k=0}^{N-1} \left( q X_k^2 + r U_k^2 \right)
\end{align*}
where in Step (a), we used the fact that
\begin{equation*}
    P_k = q + a^2 P_{k+1} - L_k^2 \left( r + b^2 P_{k+1} \right)
\end{equation*}
and in Step (b)
\begin{equation*}
    \left( r + b^2 P_{k+1} \right) L_k X_k U_k = P_{k+1} a b X_k U_k.
\end{equation*}
Hence, \eqref{eqn:X_N_1} becomes
\begin{align*}
    q X_N^2 + \sum_{k=0}^{N-1} \left( q X_k^2 + r U_k^2 \right)
    &= P_0 X_0^2 + \sum_{k=0}^{N-1} \left( r + b^2 P_{k+1} \right) \left( L_k X_k + U_k \right)^2 \\
    &\quad + \sum_{k=0}^{N-1} \left( 2 P_{k+1} (a X_k + b U_k) W_k + P_{k+1} W_k^2 \right). \numberthis \label{eqn:X_N_2}
\end{align*}
We use \eqref{eqn:X_N_2} in \eqref{eqn:Cost}:
\begin{align*}
    \frac{1}{N} \mathbb{E} \Bigg[ q X_N^2 + \sum_{k=0}^{N-1} \Big( q X_k^2 & + r U_k^2 \Big) \Bigg]
    = \frac{1}{N} \mathbb{E} \left[ P_0 X_0^2 + \sum_{k=0}^{N-1} \left( r + b^2 P_{k+1} \right) \left( L_k X_k + U_k \right)^2 \right. \\
    & \quad \left. + \sum_{k=0}^{N-1} \left( 2 P_{k+1} (a X_k + b U_k) W_k + P_{k+1} W_k^2 \right) \right] \\
    & = \frac{1}{N} \left(P_0 \, \mathbb{E} [X_0^2] + \mathbb{E} \left[ \sum_{k=0}^{N-1} \left( r + b^2 P_{k+1} \right) \left( L_k X_k + U_k \right)^2 \right] \right.\\
    & \quad \left. + \sum_{k=0}^{N-1} \left( 2 P_{k+1} \mathbb{E} \left[ (a X_k + b U_k) W_k \right] + P_{k+1} \mathbb{E} \left[ W_k^2 \right] \right) \right) \\
    & \overset{(a)}{=} \frac{1}{N} \left( P_0 \, \mathbb{E} [X_0^2] +\mathbb{E} \left[ \sum_{k=0}^{N-1} \left( r + b^2 P_{k+1} \right) \left( L_k X_k + U_k \right)^2 \right] + \sum_{k=0}^{N-1} P_{k+1} \, \sigma_W^2 \right) \\
    & \overset{(b)}{\approx}  \frac{1}{N} \left( P_0 \, \mathbb{E} [X_0^2] + \mathbb{E}\left[\sum_{k=0}^{N-1} L^2 \left( r + b^2 P \right) \left( X_k - \mathbb{E}[X_k | I^\mathcal{C}_{k}] \right)^2 \right] + N P \, \sigma_W^2 \right) \\
    & =  \frac{P_0}{N} \, \mathbb{E} [X_0^2] + \frac{1}{N} \mathbb{E}\left[\sum_{k=0}^{N-1} L^2 \left( r + b^2 P \right) \left( X_k - \mathbb{E}[X_k | I^\mathcal{C}_{k}] \right)^2\right] + P \, \sigma_W^2, \numberthis \label{eqn:eqv_prob_last_eqn}
\end{align*}
where the step (a) is because the expectations are unconditional and $W_k$ is independent of $X_k$ and $U_k$ for any $\tau$, and (b) is by using the optimal steady-state controller. Note that approximation (b) becomes accurate for sufficiently long horizons. Since $X_0$ is the initial state and is independent of $D_0$, equivalent problem $P_2$ follows from~\eqref{eqn:eqv_prob_last_eqn}.

\section{Proof of Lemma \ref{lemma:DP_Right_Boundry}} \label{Appendix_D}
From~\eqref{eqn:V_N-tau_Q1}, we observe that the value function depends only on \(\sigma_W^2\).
We proceed by induction.
Let \(\ell = N-\tau-\bar{s}\) for some \(\bar{s}=s+1 \ge 0\), and assume that~\eqref{eqn:V_l_Q_s+1} holds for the next stage.
Then,
\begin{align}
    V_{\ell}(I_{\ell}^{\mathcal S})\big|_{\,Q_{\ell}=\bar{s}}
    &=
    V'_{\ell\,\bar{s}\,1}(I_{\ell}^{\mathcal S}) \notag\\
    &=
    \sum_{j=0}^{\tau-1} a^{2(\tau-1-j)} \sigma_W^2
    + s
    \left(
        \sum_{j=0}^{\tau-1} a^{2(\tau-1-j)} \sigma_W^2
    \right) \notag\\
    &=
    \bar{s}
    \left(
        \sum_{j=0}^{\tau-1} a^{2(\tau-1-j)} \sigma_W^2
    \right).
\end{align}
Here, step~(a) follows from the DP recursion~\eqref{eqn:DP_rec} and the fact that, at stage \(k=\ell\) with budget \(Q_k=\bar{s}\), the scheduler can transmit at all subsequent time instants. Consequently, the minimizing action is \(D_k=1\), and only the corresponding stage cost is incurred.

Similarly, to establish~\eqref{eqn:V_l-1_Q_s+1}, observe that at stage \(\ell\), if the scheduler chooses not to transmit, i.e., \(D_{\ell}=0\), and the remaining budget satisfies \(Q_{\ell}=s-1\), then scheduling can be performed at all subsequent time instants. Consequently,~\eqref{eqn:V_l-1_Q_s+1} follows directly from the DP recursion~\eqref{eqn:DP_rec} and the next-stage value function under scheduling given by~\eqref{eqn:V_l_Q_s+1}.

\section{Proof of Theorem \ref{thm:sym_policies_are_optimal}} \label{Appendix_E}
In this proof, we focus on the value function at stage \(\ell\) with budget \(Q_{\ell}=s-1\), as defined in~\eqref{eqn:V_l-1_Q_s+1}. Our objective is to characterize the set of policies that minimize the corresponding expected cost, given by
\begin{equation}
    \min_{f_\ell(I_\ell^{\mathcal S})} \; \mathbb{E}\!\left[ V'_{\ell (s-1) 0}(I_\ell^{\mathcal S}) \right].
\end{equation}
Since this is an optimal control problem, optimality must hold at every stage of the decision process~\cite{bertsekas2012dynamic}.

Note that, since we evaluate the cost at stage \(\ell = N-\tau-s\) under the action \(D_\ell=0\), we can rewrite the objective by using total probability as
\begin{align}
    \mathbb{E}\!\left[ V'_{\ell (s-1) 0}(I_\ell^{\mathcal S}) \right]
    & =
    \sum_{m=1}^{N-Q_0-\tau-1}
    \Pr\!\left(D_{\ell-m-1}=1,\; Q_{\ell-m}=s-1\right) \notag\\
    & \qquad \qquad \mathbb{E}\!\left[
        V'_{\ell (s-1) 0}(I_\ell^{\mathcal S})
        \mid
        D_{\ell-m-1}=1,\; Q_{\ell-m}=s-1
    \right]
    \label{eqn:Exp_Vk_s+1}
\end{align}
where the upper bound follows directly from the geometry (see, for example, Fig.~\ref{fig:dp_diagram}).

We now focus on the conditional expectation term
\[
\mathbb{E}\!\left[ V'_{\ell(s-1)0}(I_\ell^{\mathcal S}) \,\middle|\, D_{\ell-m-1}=1,\; Q_{\ell-m}=s-1 \right].
\]
Using the state evolution in~\eqref{eqn:X_tk+m} together with~\eqref{eqn:V_l-1_Q_s+1}, we obtain
\begin{align}
    &\mathbb{E}\!\left[ V'_{\ell(s-1)0}(I_\ell^{\mathcal S}) \,\middle|\, D_{\ell-m-1}=1,\; Q_{\ell-m}=s-1 \right] \notag\\
    &\qquad =
    \mathbb{E}\!\left[
        a^{2\tau}\bar{\mathcal E}_\ell^{\,2}
        + s\!\left(\sum_{j=0}^{\tau-1} a^{2(\tau-1-j)}\sigma_W^2\right)
        \,\middle|\,
        D_{\ell-m-1}=1,\; Q_{\ell-m}=s-1
    \right] \notag\\
    &\qquad \overset{(a)}{=}
    \mathbb{E}\!\Bigg[
        a^{2\tau}\!\left(
            S_m
            - \mathbb{E}\!\left[
                S_m \,\middle|\,
                X_{\ell-m},\;
                D_{\ell-m-1}=1,\;
                D_{\ell-m}=0,\;
                \cdots, D_\ell=0
            \right]
        \right)^{\!2} \\
        & \qquad \qquad + s\!\left(\sum_{j=0}^{\tau-1} a^{2(\tau-1-j)}\sigma_W^2\right)
        \,\Bigg|\,
        D_{\ell-m-1}=1,\; Q_{\ell-m}=s-1
    \Bigg].
    \label{eqn:condExp_Vks0}
\end{align}
Here, step~(a) follows from the definition of \(\bar{\mathcal E}_k\) in~\eqref{eqn:E_bar} and the fact that, after the most recent update at time \(\ell-m\), the state process is Markov.

Hence, the problem reduces to
\begin{align}
    & \min_{f_\ell(I_\ell^{\mathcal S})} \;
    \mathbb{E}\!\left[ V'_{\ell(s-1)0}(I_\ell^{\mathcal S}) \right] \notag \\
    & =
    \min_{f_\ell(I_\ell^{\mathcal S})} \; \Bigg\{\sum_{m=1}^{N-Q_0-\tau-1}
    \Pr\!\left(D_{\ell-m-1}=1,\; Q_{\ell-m}=s-1\right)
    \mathbb{E}\!\left[
        V'_{\ell(s-1)0}(I_\ell^{\mathcal S})
        \,\middle|\,
        D_{\ell-m-1}=1,\; Q_{\ell-m}=s-1
    \right] \Bigg\} \notag\\
    & = \min_{f_\ell(I_\ell^{\mathcal S})} \; \Bigg\{
    \sum_{m=1}^{N-Q_0-\tau-1}
    \Pr\!\left(D_{\ell-m-1}=1,\; Q_{\ell-m}=s-1\right)
    \; \mathbb{E}\!\Bigg[
        s\!\left(
            \sum_{j=0}^{\tau-1} a^{2(\tau-1-j)} \sigma_W^2
        \right) \notag\\
    & \qquad 
        +\;
        a^{2\tau}
        \left(
            S_m
            - \mathbb{E}\!\left[
                S_m
                \,\middle|\,
                X_{\ell-m},\;
                D_{\ell-m-1}=1,\;
                D_{\ell-m}=\cdots=D_\ell=0
            \right]
        \right)^{\!2} \notag \\
        & \qquad \qquad \qquad \,\Bigg|\,
        D_{\ell-m-1}=1,\; Q_{\ell-m}=s-1
    \Bigg] \Bigg\} \label{eqn:Eqv_Exp_V_2}\\
    & \overset{(a)}{=} \min_{f_\ell(I_\ell^{\mathcal S})} \; \Bigg\{
    a^{2\tau} \sum_{m=1}^{N-Q_0-\tau-1}
    \Pr\!\left(D_{\ell-m-1}=1,\; Q_{\ell-m}=s-1\right)  \notag \\ 
    & \qquad \qquad \mathbb{E}\!\Bigg[
        \left( 
            S_m
            - C_m(X_{\ell-m},\pi)
        \right)^{\!2} \Bigg|\,
        D_{\ell-m-1}=1,\; Q_{\ell-m}=s-1
    \Bigg] \Bigg\}.
    \label{eqn:Eqv_Exp_V}
\end{align}
where in step (a) we used the fact that the first term is independent of $m$, and we defined 
\begin{equation}
    C_m(X_{\ell-m},\pi) \triangleq \mathbb{E}\!\left[
                S_m
                \,\middle|\,
                X_{\ell-m},\;
                D_{\ell-m-1}=1,\;
                D_{\ell-m}=\cdots=D_\ell=0
            \right]
\end{equation}
Now, we evaluate the expectation term in~\eqref{eqn:Eqv_Exp_V}:
\begin{align}
&\mathbb{E}\!\Big[
    (S_m - C_m(X_{\ell-m},\pi))^{2}
    \,\Big|\,
    D_{\ell-m-1}=1,\; Q_{\ell-m}=s-1
\Big] \notag \\
& \qquad =
\int_{-\infty}^{\infty}
\mathbb{E}\!\Big[
    (S_m - C_m(x,\pi))^{2}
    \,\Big|\,
    X_{\ell-m}=x,\; D_{\ell-m-1}=1,\; Q_{\ell-m}=s-1
\Big]\,
f'_X(x)\,dx .
\end{align}
where we defined \( f'_X(x) \triangleq f_{X_{\ell-m}\mid D_{\ell-m}=1,\;Q_{\ell-m}=s-1}(x). \)

Note that
\begin{align}
    &\mathbb{E}\!\Big[
        \big(S_m - C_m(X_{\ell-m},\pi)\big)^{2}
        \,\Big|\,
        X_{\ell-m}=x,\; D_{\ell-m-1}=1,\; Q_{\ell-m}=s-1
    \Big] \notag\\
    &\qquad \overset{(a)}{=}
    \mathbb{E}\!\Big[
        S_m^{2}
        \,\Big|\,
        X_{\ell-m}=x,\; D_{\ell-m-1}=1,\; Q_{\ell-m}=s-1
    \Big]
    + \big(C_m(x,\pi)\big)^{2} \notag\\
    &\qquad\qquad
    -2\,C_m(X_{\ell-m},\pi)\,
    \mathbb{E}\!\Big[
        S_m
        \,\Big|\,
        X_{\ell-m}=x,\; D_{\ell-m-1}=1,\; Q_{\ell-m}=s-1
    \Big] \notag\\
    &\qquad \overset{(b)}{=}
    \mathbb{E}\!\Big[ S_m^2 + \big(C_m(X_{\ell-m},\pi)\big)^{2}\,\Big|\,
        X_{\ell-m}=x,\; D_{\ell-m-1}=1,\; Q_{\ell-m}=s-1
    \Big].
    \label{eqn:expand_quad}
\end{align}
In step~(a), we used the fact that \(C_m(x,\pi)\) is deterministic under the conditioning \(X_{\ell-m}=x\).
In step~(b), we used the causality of the scheduling policy together with the independence and zero-mean property of \(\{W_k\}\). Since \(S_m=\sum_{j=0}^{m-1} a^{m-1-j} W_{\ell-m+j}\) depends only on the future disturbances
\(\{W_{\ell-m},\dots,W_{\ell-1}\}\), while \((X_{\ell-m},D_{\ell-m},Q_{\ell-m})\) is measurable with respect to
\(\sigma(W_0,\dots,W_{\ell-m-1})\), it follows that
\[
\mathbb{E}\!\left[S_m C_m(X,\pi)\mid X_{\ell-m}=x,\; D_{\ell-m-1}=1,\; Q_{\ell-m}=s-1\right]=0,
\]
and
\[
\mathbb{E}\!\left[S_m^2 \mid X_{\ell-m}=x,\; D_{\ell-m-1}=1,\; Q_{\ell-m}=s-1\right]
= \sigma_W^2 \sum_{j=0}^{m-1} a^{2(m-1-j)} .
\]
Therefore, the problem in~\eqref{eqn:Eqv_Exp_V} can be written as
\begin{align}
    & \min_{f_\ell(I_\ell^{\mathcal S})}
    \Bigg\{
        a^{2\tau}
        \sum_{m=1}^{N-Q_0-\tau-1}
        \Pr\!\left(D_{\ell-m-1}=1,\; Q_{\ell-m}=s-1\right) \notag \\
        & \qquad \mathbb{E}\!\Bigg[
            \big(
                S_m - C_m(X_{\ell-m},\pi)
            \big)^{2}
            \,\Big|\,
            D_{\ell-m-1}=1,\; Q_{\ell-m}=s-1
        \Bigg]
    \Bigg\} \notag
\end{align}
Note that
\begin{align}
	&\mathbb{E}\!\Big[
	(S_m-C_m(X_{\ell-m},\pi))^2
	\Big| X_{\ell-m}=x,\,  D_{\ell-m-1}=1,\,
	Q_{\ell-m}=s-1
	\Big] \notag\\
	& \qquad \qquad \overset{(a)}{=}
	\mathbb{E}\!\left[
	S_m^2
	\Big| X_{\ell-m}=x,\,
	D_{\ell-m-1}=1,\,
	Q_{\ell-m}=s-1
	\right] \notag \\
	&  \qquad \qquad \qquad + \big(C_m(x,\pi)\big)^2  -2C_m(x,\pi)
	\mathbb{E}\!\Big[
	S_m
	\Big| X_{\ell-m}=x,\, D_{\ell-m-1}=1,\,
	Q_{\ell-m}=s-1
	\Big] \notag\\
	& \qquad \qquad \overset{(b)}{=}
	\sigma_W^2 \sum_{j=0}^{m-1} a^{2(m-1-j)}
	+
	\big(C_m(x,\pi)\big)^2 .
	\label{eqn:Cm_expansion}
\end{align}
In step (a), we used the fact that \(C_m(x,\pi)\) is deterministic under the conditioning \(X_{\ell-m}=x\). In step (b), we used the causality of the scheduling policy together with the independence and zero-mean property of \(\{W_k\}\). Since
\(
S_m=\sum_{j=0}^{m-1}a^{m-1-j}W_{\ell-m+j}
\)
depends only on the future disturbances \(\{W_{\ell-m},\dots,W_{\ell-1}\}\), while \((X_{\ell-m},D_{\ell-m},Q_{\ell-m})\) is measurable with respect to \(\sigma(W_0,\dots,W_{\ell-m-1})\), it follows that
\begin{equation*}
	\mathbb{E}\!\left[
	S_m
	\mid X_{\ell-m}=x,\,
	D_{\ell-m-1}=1,\,
	Q_{\ell-m}=s-1
	\right]=0.
\end{equation*}
For each fixed past branch, the variance term
\(
\sigma_W^2 \sum_{j=0}^{m-1} a^{2(m-1-j)}
\)
in~\eqref{eqn:Cm_expansion} is fixed and independent of the current scheduling rule \(f_\ell(I_\ell^{\mathcal S})\). Hence, it does not affect the minimization over \(f_\ell(I_\ell^{\mathcal S})\). Therefore, up to branchwise additive constants, the minimization in~\eqref{eqn:Eqv_Exp_V} reduces to
\begin{align}
	& \min_{f_\ell(I_\ell^{\mathcal S})}
	\Bigg\{
	a^{2\tau}
	\sum_{m=1}^{N-Q_0-\tau-1}
	\Pr(D_{\ell-m-1}=1,Q_{\ell-m}=s-1) \notag \\
	& \qquad \qquad \int_{-\infty}^{\infty}
	\big(C_m(x,\pi)\big)^2 f'_X(x)\,dx
	\Bigg\}.
	\label{eqn:min_Cm}
\end{align}
Since \(\big(C_m(x,\pi)\big)^2 \ge 0\) and \(f'_X(x)\ge 0\), the objective in~\eqref{eqn:min_Cm} is minimized when
\begin{equation*}
	C_m(X_{\ell-m},\pi)=0
	\; \text{almost surely},
	\;
	m\in\{1,\dots,N-\tau-1\}.
\end{equation*}
This is precisely the defining property of symmetric policies in Definition~\eqref{defn:sym_pol}. Hence, symmetric policies eliminate the conditional-bias term and attain the branchwise minimum. Since the argument holds for every past branch in the dynamic program, symmetric policies are optimal for the constrained LQR problem \(P_2\).

\section{Proof of Lemma~\ref{lemma:est_error_evolution}} \label{Appendix_F}
\begin{align}
\mathbb{E}\!\left[ X_{t_k+m+1} \mid I^{\mathcal C}_{t_k+m+1} \right]
&= \mathbb{E}\!\left[
a X_{t_k+m} + b U_{t_k+m} + W_{t_k+m}
\;\middle|\; I^{\mathcal C}_{t_k+m+1}
\right] \notag\\
&\overset{(a)}{=} a\,\mathbb{E}\!\left[
X_{t_k+m} \mid I^{\mathcal C}_{t_k+m+1}
\right]
+ b\,U_{t_k+m}\notag\\
&= a\,\mathbb{E}\!\left[
X_{t_k+m} \mid I^{\mathcal C}_{t_k+m+1}
\right]
- bL\,\mathbb{E}\!\left[
X_{t_k+m} \mid I^{\mathcal C}_{t_k+m}
\right] \notag\\
&= (a-bL)\,\mathbb{E}\!\left[
X_{t_k+m} \mid I^{\mathcal C}_{t_k+m}
\right] \notag \\
& \qquad + a\!\left(
\mathbb{E}\!\left[
X_{t_k+m} \mid I^{\mathcal C}_{t_k+m+1}
\right]
- \mathbb{E}\!\left[
X_{t_k+m} \mid I^{\mathcal C}_{t_k+m}
\right]
\right).
\label{eqn:Exp_X_tk+m+1}
\end{align}
where in step~(a), we used the fact that, due to the delay~$\tau \ge 1$, the controller information
\(I^{\mathcal C}_{t_k+m+1}\) does not depend on the disturbance \(W_{t_k+m}\), and hence
\(\mathbb{E}[W_{t_k+m}\mid I^{\mathcal C}_{t_k+m+1}]=0\).

Note that for all \(m\),
\begin{equation}
X_{t_k+m}
=
a^{m} X_{t_k}
+ \sum_{j=0}^{m-1} a^{m-1-j} b\,U_{t_k+j}
+ \sum_{j=0}^{m-1} a^{m-1-j} W_{t_k+j}.
\end{equation}

Taking conditional expectations with respect to \(I^{\mathcal C}_{t_k+m}\), we obtain
\begin{equation}
\mathbb{E}\!\left[ X_{t_k+m} \mid I^{\mathcal C}_{t_k+m} \right]
=
a^{m} X_{t_k}
+ \sum_{j=0}^{m-1} a^{m-1-j} b\,U_{t_k+j}
+ \sum_{j=0}^{m-1} a^{m-1-j}
\mathbb{E}\!\left[ W_{t_k+j} \mid I^{\mathcal C}_{t_k+m} \right].
\end{equation}

Similarly,
\begin{equation}
\mathbb{E}\!\left[ X_{t_k+m} \mid I^{\mathcal C}_{t_k+m+1} \right]
=
a^{m} X_{t_k}
+ \sum_{j=0}^{m-1} a^{m-1-j} b\,U_{t_k+j}
+ \sum_{j=0}^{m-1} a^{m-1-j}
\mathbb{E}\!\left[ W_{t_k+j} \mid I^{\mathcal C}_{t_k+m+1} \right].
\end{equation}

Therefore,
\begin{align}
&\mathbb{E}\!\left[ X_{t_k+m} \mid I^{\mathcal C}_{t_k+m+1} \right]
- \mathbb{E}\!\left[ X_{t_k+m} \mid I^{\mathcal C}_{t_k+m} \right] \notag\\
&\qquad =
\sum_{j=0}^{m-1} a^{m-1-j}
\left(
\mathbb{E}\!\left[ W_{t_k+j} \mid I^{\mathcal C}_{t_k+m+1} \right]
-
\mathbb{E}\!\left[ W_{t_k+j} \mid I^{\mathcal C}_{t_k+m} \right]
\right).
\label{eqn:cond_exp_difference}
\end{align}

Hence, \eqref{eqn:Exp_X_tk+m+1} becomes
\begin{align}
\mathbb{E}\!\left[ X_{t_k+m+1} \mid I^{\mathcal C}_{t_k+m+1} \right]
&=  (a-bL)\,\mathbb{E}\!\left[ X_{t_k+m} \mid I^{\mathcal C}_{t_k+m} \right] \notag\\
&\quad
+ a \sum_{j=0}^{m-1} a^{m-1-j}
\left(
\mathbb{E}\!\left[ W_{t_k+j} \mid I^{\mathcal C}_{t_k+m+1} \right]
-
\mathbb{E}\!\left[ W_{t_k+j} \mid I^{\mathcal C}_{t_k+m} \right]
\right).
\label{eqn:4}
\end{align}

Note that
\begin{align}
a \sum_{j=0}^{m-1} a^{m-1-j}
\mathbb{E}\!\left[ W_{t_k+j} \mid I^{\mathcal C}_{t_k+m+1} \right]
&= \sum_{j=0}^{m-1} a^{m-j}
\mathbb{E}\!\left[ W_{t_k+j} \mid I^{\mathcal C}_{t_k+m+1} \right] \notag\\
&= \sum_{j=0}^{m} a^{m-j}
\mathbb{E}\!\left[ W_{t_k+j} \mid I^{\mathcal C}_{t_k+m+1} \right]
- \mathbb{E}\!\left[ W_{t_k+m} \mid I^{\mathcal C}_{t_k+m+1} \right] \notag\\
&= \sum_{j=0}^{m} a^{m-j}
\mathbb{E}\!\left[ W_{t_k+j} \mid I^{\mathcal C}_{t_k+m+1} \right] \notag\\
&= \mathbb{E}\!\left[ S_{m+1} \mid I^{\mathcal C}_{t_k+m+1} \right].
\label{eqn:Sm_plus_1_relation}
\end{align}
The third equality follows since, for \(m \ge \tau\), the controller information
\(I^{\mathcal C}_{t_k+m+1}\) depends on scheduling decisions only up to
\(D_{t_k+m+1-\tau}\) and past disturbances, and is therefore independent of the
current disturbance \(W_{t_k+m}\). Hence,
\(\mathbb{E}[W_{t_k+m}\mid I^{\mathcal C}_{t_k+m+1}]=0\).

Hence,
\begin{equation}
\mathbb{E}\!\left[ X_{t_k+m+1} \mid I^{\mathcal C}_{t_k+m+1} \right]
=
(a-bL)\,\mathbb{E}\!\left[ X_{t_k+m} \mid I^{\mathcal C}_{t_k+m} \right]
+ \mathbb{E}\!\left[ S_{m+1} \mid I^{\mathcal C}_{t_k+m+1} \right]
- a\,\mathbb{E}\!\left[ S_m \mid I^{\mathcal C}_{t_k+m} \right].
\end{equation}

\section{State-Based Policy}
\label{appendix:G}

We derive the controller corresponding to the state-based scheduling policy
\[
D_k = \mathds{1}_{\{|X_k| > \gamma\}}.
\]

Define
\[
\xi_1 \triangleq a X_{t_k} - bL \hat{X}_{t_k}.
\]
Then
\[
X_{t_k+1} = \xi_1 + W_{t_k}.
\]

Conditioning on $|X_{t_k+1}|<\gamma$,
\[
\hat{X}_{t_k+1}
=
\xi_1 + C_{0,1},
\qquad
C_{0,1}
=
\mathbb{E}\!\left[
W_{t_k}\mid |\xi_1+W_{t_k}|<\gamma
\right].
\]

At the next step,
\begin{align}
X_{t_k+2}
&=
a X_{t_k+1}
-
bL \hat{X}_{t_k+1}
+
W_{t_k+1} \notag\\
&=
(a-bL)\xi_1
-
bL C_{0,1}
+
a W_{t_k}
+
W_{t_k+1}.
\end{align}

Define
\[
\xi_2
\triangleq
(a-bL)\xi_1
-
bL C_{0,1}.
\]
Then
\[
X_{t_k+2}
=
\xi_2
+
a W_{t_k}
+
W_{t_k+1}.
\]

Proceeding recursively, for $m \ge 1$,
\[
X_{t_k+m}
=
\xi_m
+
\sum_{i=0}^{m-1}
a^{m-1-i} W_{t_k+i},
\]
where for $m \ge 2$
\begin{equation}
\xi_m
=
(a-bL)\xi_{m-1}
-
bL
\sum_{i=0}^{m-2}
a^{m-2-i} C_{i,m-1}.
\label{eq:xi_recursion}
\end{equation}

Define the conditioning event
\[
\mathcal{A}_m
=
\bigcap_{j=1}^m
\left\{
\left|
\xi_j
+
\sum_{i=0}^{j-1}
a^{j-1-i} W_{t_k+i}
\right|
\le \gamma
\right\}.
\]

Then the conditional state estimate is
\begin{equation}
\hat{X}_{t_k+m}
=
\xi_m
+
\sum_{i=0}^{m-1}
a^{m-1-i} C_{i,m},
\qquad
C_{i,m}
=
\mathbb{E}[W_{t_k+i}\mid \mathcal{A}_m].
\label{eq:state_est_general}
\end{equation}

Finally, the resulting controller becomes
\begin{equation}
U^{\mathrm{State}}_{t_k+m}
=
-L
\left(
\xi_m
+
\sum_{i=0}^{m-1}
a^{m-1-i} C_{i,m}
\right),
\end{equation}
which matches~\eqref{eqn:State_controller}.

\section{Extension to the Transient Case}
\label{appendix:E}
In the transient case, the optimal controller in Thm.~\ref{theorem:optimal control} becomes
\begin{equation}
    U_k = \gamma_k(I^\mathcal{C}_{k}) = -L_k\,\hat{X}_k,
    \label{eqn:optimal_controller_transient}
\end{equation}
where
\begin{equation}
    \hat{X}_k = \mathbb{E}\bigl[X_k \mid I^\mathcal{C}_{k}\bigr], 
    \quad
    L_k = \frac{a b\,P_{k+1}}{r + b^2 P_{k+1}},
    \quad
    P_k = a^2 P_{k+1} + q - \frac{(a b P_{k+1})^2}{r + b^2 P_{k+1}}.
\end{equation}
Hence, under this controller, the equivalent problem in Thm.~\ref{theorem:equivalent problem} becomes
\begin{align}
P'_{2}:\quad 
  &\min_{\boldsymbol{\pi} \in \Pi}\; \frac{1}{N}\,\mathbb{E}\left[\sum_{k=0}^{N-1} L_k \bigl(r + b^2 P_{k+1}\bigr)\mathcal{E}_{k}^{2}\right] 
  \label{eqn:P_2_transient}\\
\text{s.t.}\quad 
  & \frac{1}{N}\sum_{k=0}^{N-1} D_{k} \le r_{s}. 
  \nonumber
\end{align}
Compared to the steady-state case, the objective now includes an additional time-varying coefficient. This term propagates into the dynamic programming formulation and consequently appears in all subsequent equations.

First, the stage cost $g_k(I_k^{\mathcal S})$ in \eqref{eqn:g_k} is modified as follows:
\begin{align}
    g_k(I_k^{\mathcal S})
    = 
    \begin{cases}
        \displaystyle
        L_{k+\tau}(r + b^2 P_{k+\tau+1}) \sum_{j=0}^{\tau-1} a^{2(\tau-1-j)} \sigma_W^2,
        & \text{if } k = t_k, \\[1.2ex]
        \displaystyle
        L_{k+\tau}(r + b^2 P_{k+\tau+1}) \Bigg(
        a^{2\tau}
        \left(
            X_k
            - \mathbb{E}\!\left[ X_k \mid I_{k+\tau}^{\mathcal C} \right]
        \right)^2
        + \sum_{j=0}^{\tau-1} a^{2(\tau-1-j)} \sigma_W^2 \Bigg),
        & \text{otherwise.}
    \end{cases}
\end{align}
Accordingly, the DP recursion becomes
\begin{align}
    V_k(I_k^{\mathcal S})
    = \min_{D_k} \Bigg\{
    & L_{k+\tau}(r + b^2 P_{k+\tau+1}) a^{2\tau} \bar{\mathcal{E}}_k^2
      + \mathbb{E}\!\left[ V_{k+1}(I_{k+1}^{\mathcal S}) \mid I_k^{\mathcal S}, D_k = 0 \right], \notag\\
    & \mathbb{E}\!\left[ V_{k+1}(I_{k+1}^{\mathcal S}) \mid I_k^{\mathcal S}, D_k = 1 \right]
    \Bigg\}
    + L_{k+\tau}(r + b^2 P_{k+\tau+1}) \sum_{j=0}^{\tau-1} a^{2(\tau-1-j)} \sigma_W^2 .
    \label{eqn:DP_rec_transient}
\end{align}
The terminal-stage expressions in \eqref{eqn:V_N-tau_Q1} and \eqref{eqn:V_N-tau_Q0} become
\begin{equation}
    V'_{(N-\tau-1)\,1\,1}(I_{N-\tau-1}^{\mathcal S})
    = L_{N-1}(r + b^2 P_N) \sum_{j=0}^{\tau-1} a^{2(\tau-1-j)} \sigma_W^2 ,
\end{equation}
and
\begin{equation}
    V'_{(N-\tau-1)\,0\,0}(I_{N-\tau-1}^{\mathcal S})
    = L_{N-1}(r + b^2 P_N) \Big(
    a^{2\tau} \bar{\mathcal{E}}_{N-\tau-1}^{\,2}
    + \sum_{j=0}^{\tau-1} a^{2(\tau-1-j)} \sigma_W^2 \Big) ,
\end{equation}
respectively.

Hence, let $\ell = N - \tau - s$ for some $s \ge 1$. Then, Lemma~\ref{lemma:DP_Right_Boundry} becomes
\begin{equation}
    V'_{\ell\,s\,1}(I_{\ell}^{\mathcal S})
    = \left(\sum_{k=N-s}^{N-1} L_k (r + b^2 P_{k+1})\right)
    \left(
        \sum_{j=0}^{\tau-1} a^{2(\tau-1-j)} \sigma_W^2
    \right),
\end{equation}
and
\begin{equation}
    V'_{\ell\,(s-1)\,0}(I_{\ell}^{\mathcal S})
    = L_{N-s}(r + b^2 P_{N-s+1}) a^{2\tau} \bar{\mathcal{E}}_{\ell}^{\,2}
      + \left(\sum_{k=N-s}^{N-1} L_k (r + b^2 P_{k+1})\right)
      \left(
          \sum_{j=0}^{\tau-1} a^{2(\tau-1-j)} \sigma_W^2
      \right),
\end{equation}

We now proceed to the separation proof. Since Lemma~\ref{lemma:DP_Right_Boundry} has changed, the expression in \eqref{eqn:Eqv_Exp_V_2} is modified as follows:
\begin{align}
    & \min_{f_\ell(I_\ell^{\mathcal S})} \;
    \mathbb{E}\!\left[ V'_{\ell\,(s-1)\,0}(I_\ell^{\mathcal S}) \right] \notag \\
    & = \min_{f_\ell(I_\ell^{\mathcal S})} \; \Bigg\{
    \sum_{m=1}^{N - Q_0 - \tau - 1}
    \Pr\!\left(D_{\ell-m} = 1,\; Q_{\ell-m} = s-1\right)  \notag \\ 
    & \qquad \qquad \times \mathbb{E}\!\Bigg[
        L_{N-s}(r + b^2 P_{N-s+1}) a^{2\tau}
        \left( 
            S_m - C_m(X_{\ell-m}, \pi)
        \right)^{2} \,\Bigg|\,
        D_{\ell-m} = 1,\; Q_{\ell-m} = s-1
    \Bigg] \notag \\
    & \qquad \qquad + \left(\sum_{k=N-s}^{N-1} L_k (r + b^2 P_{k+1})\right)
    \bigg(\sum_{j=0}^{\tau-1} a^{2(\tau-1-j)} \sigma_W^2 \bigg)
    \Bigg\}.
\end{align}

Since the term
\[
\left(\sum_{k=N-s}^{N-1} L_k (r + b^2 P_{k+1})\right)
\bigg(\sum_{j=0}^{\tau-1} a^{2(\tau-1-j)} \sigma_W^2 \bigg)
\]
is independent of $m$, and since $L_{N-s}(r + b^2 P_{N-s+1}) > 0$, we recover the same expression as in \eqref{eqn:Eqv_Exp_V}. Therefore, symmetric policies remain optimal, and the separation principle continues to hold under the transient controller. We now proceed to the optimal scheduling mechanism.

With the modified DP recursion in \eqref{eqn:DP_rec_transient}, Lemma~\ref{lem:zero_budget_recursion} becomes
\begin{equation}
    V'_{(k-1)\,0\,0}\!\left(I_{k-1}^{\mathcal S}\right)
    =
    s_{(k-1)0}\,\bar{\mathcal{E}}_{k-1}^{2}
    +
    c_{(k-1)00}\,\sigma_W^{2},
    \label{eqn:V_k-1_00_transient}
\end{equation}
where
\begin{align}
    s_{(k-1)0} &\triangleq L_{k+\tau-1}(r + b^2 P_{k+\tau}) a^{2\tau} + a^{2} s_{k0}, \\
    c_{(k-1)00} &\triangleq s_{k0} + c_{k00} + L_{k+\tau-1}(r + b^2 P_{k+\tau}) \sum_{j=0}^{\tau-1} a^{2(\tau-1-j)} .
\end{align}

Similarly, Lemma~\ref{lemma:D_1_Q_1} becomes
\begin{equation}
   V'_{(k-1)\,1\,1}\!\left(I_{k-1}^{\mathcal S}\right)
    =
    c_{(k-1)11}\,\sigma_W^{2},
\end{equation}
where
\begin{equation}
    c_{(k-1)11} \triangleq s_{k0} + c_{k00} + L_{k+\tau-1}(r + b^2 P_{k+\tau})
    \left(\sum_{j=0}^{\tau-1} a^{2(\tau-1-j)} \right).
\end{equation}

Finally, Lemmas~\ref{lemma:Diagonal} and \ref{lemma:Horizontal} become
\begin{equation}
    V'_{k(j-1)1}\!\left(I^{\mathcal S}_{k}\right)
    =
    c_{k(j-1)1}\,\sigma_W^{2} + z_{k(j-1)1},
    \label{eqn:V_k_j-1_1_transient}
\end{equation}
where
\begin{align}
    c_{k(j-1)1}
    & \triangleq
    c_{(k+1)j0}\,
    \Pr\!\big(D_{k+1}=0 \mid D_k=1\big)
    + c_{(k+1)j1}\,
    \Pr\!\big(D_{k+1}=1 \mid D_k=1\big) \notag \\
    & \qquad +
    L_{k+\tau}(r + b^2 P_{k+\tau+1})
    \sum_{i=0}^{\tau-1} a^{2(\tau-1-i)}.
\end{align}
and
\begin{equation}
    V'_{(k-1)j0}\!\left(I^{\mathcal S}_{k-1}\right)
    =
    s_{(k-1)j}\,\bar{\mathcal E}_{k-1}^{2}
    +
    c_{(k-1)j0}\,\sigma_W^{2}
    + z_{(k-1)j0},
    \label{eqn:V_k-1_j0_transient}
\end{equation}
where
\begin{align}
    s_{(k-1)j}
    &\triangleq
    a^{2\tau} L_{k+\tau-1}(r + b^2 P_{k+\tau})
    + a^{2}\Pr\!\big(D_k=0 \mid D_{k-1}=0\big) s_{kj},
    \\[0.3em]
    c_{(k-1)j0}
    &\triangleq
    \Pr\!\big(D_k=0 \mid D_{k-1}=0\big)c_{k j 0}
    +
    \Pr\!\big(D_k=1 \mid D_{k-1}=0\big)c_{k j 1} \notag \\
    & \qquad +
    L_{k+\tau-1}(r + b^2 P_{k+\tau})
    \sum_{i=0}^{\tau-1} a^{2(\tau-1-i)} .
\end{align}
The terms $z_{k(j-1)1}$ and $z_{(k-1)j0}$ remain the same as in Lemmas~\ref{lemma:Diagonal} and \ref{lemma:Horizontal}, respectively.

\bibliographystyle{IEEEtran}
\bibliography{references_V2.bib}

\end{document}